\def\mode{PreprintMode}
\def\JournalMode{
	\documentclass[sn-mathphys]{sn-jnl}
}
\def\PreprintMode{
	\documentclass{article}
	\pdfoutput=1
}
\pgfplotsset{width=10cm,compat=1.8}
    \newenvironment{customlegend}[1][]{%
        \begingroup
        \csname pgfplots@init@cleared@structures\endcsname
        \pgfplotsset{#1}%
    }{%
        \csname pgfplots@createlegend\endcsname
        \endgroup
    }%
    \def\addlegendimage{
    \csname pgfplots@addlegendimage\endcsname}
\ifstrequal{\mode}{JournalMode}{
	\jyear{2021}
	}
\theoremstyle{thmstyleone}
\newtheorem{theorem}{Theorem}[section]
\newtheorem{lemma}[theorem]{Lemma}
\newtheorem{algo}[theorem]{Algorithm}
\theoremstyle{thmstyletwo}
\theoremstyle{thmstylethree}
\numberwithin{equation}{section}
\newcommand\xrowht[2][0]{\addstackgap[.5\dimexpr#2\relax]{\vphantom{#1}}}
\newcommand{\bsgamma}{{\boldsymbol{\gamma}}}
\newcommand{\bsh}{{\boldsymbol{h}}}
\newcommand{\bsp}{{\boldsymbol{p}}}
\newcommand{\bsell}{{\boldsymbol{\ell}}}
\newcommand{\bst}{{\boldsymbol{t}}}
\newcommand{\bsx}{{\boldsymbol{x}}}
\newcommand{\bsq}{{\boldsymbol{q}}}
\newcommand{\bsy}{{\boldsymbol{y}}}
\newcommand{\bsz}{{\boldsymbol{z}}}
\newcommand{\bszero}{{\boldsymbol{0}}}
\newcommand{\rd}{\mathrm{d}}
\newcommand{\rme}{\mathrm{e}}
\newcommand{\ri}{\mathrm{i}}
\newcommand{\bbZ}{\mathbb{Z}}
\newcommand{\bbN}{\mathbb{N}}
\newcommand{\bbU}{\mathbb{U}}
\newcommand{\calA}{\mathcal{A}}
\newcommand{\calO}{\mathcal{O}}
\newcommand{\scrP}{\mathscr{P}}
\newcommand{\scrQ}{\mathscr{Q}}
\newcommand\supp{{\mathrm{supp}}}
\newcommand{\setu}{{\mathfrak{u}}}
\newcommand{\setw}{{\mathfrak{w}}}
\definecolor{darkred}{RGB}{139,0,0}
\definecolor{darkgreen}{RGB}{0,100,0}
\definecolor{darkmagenta}{RGB}{170,0,120}
\definecolor{darkpurple}{RGB}{110,0,180}
\definecolor{darkblue}{RGB}{40,0,200}
\definecolor{darkbrown}{rgb}{0.75,0.40,0.15}
\newcommand{\wm}[1]{{\color{darkgreen}{#1}}}
\begin{document}

\ifthenelse{\equal{\mode}{JournalMode}}{
\title[Constructing Lattice Algorithms for Approximation with Composite $n$]
 { \centering Constructing \\Embedded Lattice-based Algorithms \\ for Multivariate Function Approximation \\
  with a Composite Number of Points}

\author[1]{\fnm{Frances Y.} \sur{Kuo}}\email{f.kuo@unsw.edu.au}

\author*[2]{\fnm{Weiwen} \sur{Mo}}\email{weiwen.mo@kuleuven.be}

\author[2]{\fnm{Dirk} \sur{Nuyens}}\email{dirk.nuyens@kuleuven.be}

\affil[1]{\orgdiv{School of Mathematics and Statistics},
           \orgname{UNSW Sydney},
           \orgaddress{\city{Sydney} \state{NSW} \postcode{2052}, \country{Australia}}}
           
\affil*[2]{\orgdiv{Department of Computer Science},
          \orgname{KU Leuven},
          \orgaddress{\street{Celestijnenlaan 200A}, \postcode{3001} \city{Leuven}, \country{Belgium}}}

\abstract{
\protect\unboldmath
We approximate $d$-variate periodic functions in weighted Korobov spaces
with general weight parameters using $n$ function values at lattice
points. We do not limit $n$ to be a prime number, as in currently
available literature, but allow any number of points, including powers of
$2$, thus providing the fundamental theory for construction of
embedded lattice sequences. Our results are constructive in that we provide a
component-by-component algorithm which constructs a suitable generating
vector for a given number of points or even a range of numbers of points. It does so without needing to
construct the index set on which the functions will be represented. The
resulting 
generating vector can then be used to approximate functions in the
underlying weighted Korobov space. We analyse the approximation error in
the worst-case setting under both the $L_2$ and $L_{\infty}$ norms. Our
component-by-component construction under the $L_2$ norm achieves the best
possible rate of convergence for lattice-based algorithms, and the theory
can be applied to lattice-based kernel methods and splines. Depending on
the value of the smoothness parameter $\alpha$, we propose two variants of
the search criterion in the construction under the $L_{\infty}$ norm,
extending previous results which hold only for product-type weight
parameters and prime $n$.
We also provide a theoretical upper bound showing that embedded lattice 
sequences are essentially as good as lattice rules with a fixed value of $n$.
Under some standard assumptions on the weight
parameters, the worst-case error bound is independent of~$d$.}

\keywords{Lattice rules, lattice algorithms, embedded lattice sequences, multivariate function
approximation, component-by-component construction, composite number of
points, non-prime number of points}

\pacs[MSC Classification]{65D15, 65T40}

\maketitle

}
{
\title{Constructing 
\\
Embedded Lattice-based Algorithms 
\\ 
for Multivariate Function Approximation 
\\
with a Composite Number of Points}

\author{
    Frances Y.~Kuo\footnote{School of Mathematics and Statistics,
        University of New South Wales, Sydney NSW 2052, Australia,
        \texttt{f.kuo@unsw.edu.au}}
\and
	Weiwen Mo\footnote{Department of Computer Science, KU Leuven,
        Celestijnenlaan 200A, 3001 Leuven, Belgium,
        \texttt{(weiwen.mo|dirk.nuyens)@kuleuven.be}}
\and
    Dirk Nuyens \footnotemark[2]
    }
    
\date{August 2022}

\maketitle

\abstract{
We approximate $d$-variate periodic functions in weighted Korobov spaces
with general weight parameters using $n$ function values at lattice
points. We do not limit $n$ to be a prime number, as in currently
available literature, but allow any number of points, including powers of
$2$, thus providing the fundamental theory for construction of
embedded lattice sequences. Our results are constructive in that we provide a
component-by-component algorithm which constructs a suitable generating
vector for a given number of points or even a range of numbers of points. It does so without needing to
construct the index set on which the functions will be represented. The
resulting 
generating vector can then be used to approximate functions in the
underlying weighted Korobov space. We analyse the approximation error in
the worst-case setting under both the $L_2$ and $L_{\infty}$ norms. Our
component-by-component construction under the $L_2$ norm achieves the best
possible rate of convergence for lattice-based algorithms, and the theory
can be applied to lattice-based kernel methods and splines. Depending on
the value of the smoothness parameter $\alpha$, we propose two variants of
the search criterion in the construction under the $L_{\infty}$ norm,
extending previous results which hold only for product-type weight
parameters and prime $n$.
We also provide a theoretical upper bound showing that embedded lattice 
sequences are essentially as good as lattice rules with a fixed value of $n$.
Under some standard assumptions on the weight
parameters, the worst-case error bound is independent of~$d$.

\textbf{Keywords: }Lattice rules, lattice algorithms, embedded lattice sequences, multivariate function
approximation, component-by-component construction, composite number of
points, non-prime number of points.

\textbf{MSC Classification: }{65D15, 65T40}
}
}

\section{Introduction}
In this paper we provide a theoretical foundation for the
\emph{component-by-component} (CBC) \emph{construction} of \emph{lattice
algorithms} and a practical CBC construction of \emph{embedded lattice sequences} \cite{CKN06}
for multivariate $L_2$ and $L_{\infty}$ approximation in the
worst-case setting, for $d$-variate functions $f$ in weighted Korobov
spaces with smoothness parameter $\alpha$ and general weights
parameters~$\bsgamma := \{ \gamma_{\setu} \}_{\setu \subset \bbN }$ (see
Section~\ref{sec:Krob} for details). The algorithm is based on function
values at $n$ lattice points (see \eqref{eq:Af} below). Our error
analysis for the CBC construction allows for any $n \ge 2$. Currently
available literature on lattice-based algorithms provide error analysis restricted to prime
$n$ and, in the case of $L_\infty$ approximation, further restricted to
only ``product''-type weight parameters.

Although our work here might look quite theoretical, it
is motivated by strong practical needs. Composite values of $n$
enable practical applications of embedded lattice sequences, e.g., with $n$ being successive powers of $2$, while
non-product weight parameters are crucial for some uncertainty
quantification problems involving PDEs with random coefficients (see POD
and SPOD weights in \cite{KKKNS21} and further references below).
Furthermore, our
analysis also applies to \emph{lattice-based kernel methods},
see~\cite{KKKNS21}.

More precisely, we consider one-periodic real-valued $L_2$ functions
defined on $[0,1]^d$ with absolutely converging Fourier series
\begin{align*}
  f(\bsx) &\,=\, \sum_{\bsh\in\bbZ^d} \widehat{f} (\bsh) \, \rme^{2\pi\ri\bsh\cdot\bsx},
  &
  \widehat{f} (\bsh) &\,:=\, \int_{[0,1]^d} f(\bsx)\, \rme^{-2\pi\ri\bsh\cdot\bsx}\,\rd\bsx,
\end{align*}
where $\widehat{f}(\bsh)$ are the Fourier coefficients and $\bsh\cdot\bsx
= h_1 x_1 + \cdots + h_d x_d$ denotes the usual dot product. The norm of
our function space will be defined in terms of the Fourier coefficients
(see~\eqref{eq:norm} below), and when the smoothness parameter
$\alpha$ of the weighted Korobov space is even then this has the
interpretation that $f$ has square-integrable mixed partial derivatives of
order $\alpha/2$.

The lattice algorithm $A_n(f)$ is defined as follows: we first
truncate the Fourier expansion to a finite index set
$\calA_d\subset\bbZ^d$ and then approximate the remaining Fourier
coefficients by $n$ \emph{rank-$1$ lattice points}, i.e.,
\begin{align} \label{eq:Af}
\begin{split}
  A_n(f)(\bsx) := \sum_{\bsh\in\calA_d} \widehat{f}^a (\bsh) \,\rme^{2\pi\ri\bsh\cdot\bsx},
  \qquad
 \widehat{f}^a (\bsh) &:= \frac{1}{n} \sum_{k=1}^n f(\bst_k)\,\rme^{-2\pi\ri \bsh \wm{\cdot} \bst_k},
  \\
  \bst_k &:= \Big\{\frac{k\bsz}{n}\Big\},
\end{split}
\end{align}
where $\bsz \in \bbU_n^d$ is known as the \emph{generating vector}
which determines the quality of the approximation, with components
from
\begin{align*} 
    \bbU_n \,:=\,  \big\{ z \in \bbZ \,: \, 1 \leq z\leq n-1 \, \, \textnormal{and} \, \, \gcd(z,n)=1  \big\},
\end{align*}
and the braces in~\eqref{eq:Af} indicate that we take the fractional
part of each component of a vector.

In this paper we follow \cite{CKNS20,CKNS21,KSW06,KWW09c} to
define the index set $\calA_d$ with some parameter $M>0$ by
\begin{align} \label{eq:AdM}
  \calA_d(M) \,:=\, \big\{\bsh\in\bbZ^d : r_{d,\alpha,\bsgamma}(\bsh) \le M \big\},
\end{align}
where the quantity $r_{d,\alpha,\bsgamma}(\bsh)$ (see \eqref{eq:rda}
below) moderates the decay of $\vert\widehat{f} (\bsh)\vert$, with a smaller value
of $r_{d,\alpha,\bsgamma}(\bsh)$ indicating that the index $\bsh$ is
more significant. The set $\calA_d(M)$ therefore collects the most
significant indices up to a threshold~$M$.

Lattice rules were originally designed for multivariate integration, see,
e.g., \cite{CN08, DKS13, LM12, Lem09, LP14, Nie92, Nuy14, SJ94}, but the
benefits of lattice-based algorithms for multivariate function
approximation have also been recognized. A key development in recent years
is the \emph{CBC construction of lattice generating vectors} in high
dimensions, with guaranteed good theoretical error bounds for integration
and approximation in a variety of settings. Specifically for
approximation, one approach is based on \emph{reconstruction lattices} in
which the generating vector $\bsz$ is constructed to recover certain
Fourier coefficients exactly so that $\widehat{f}^a (\bsh) =
\widehat{f}(\bsh)$ for all $\bsh \in \calA_d$ in~\eqref{eq:Af}, where
$\calA_d$ can be any index set not restricted to the form~\eqref{eq:AdM},
see, e.g., \cite{BKUV17,Kam13,PV16,KPV15,KMNN}. Combining multiple
reconstruction lattices has also been shown to improve the error, see,
e.g., \cite{GIKV21, LK18, LK19, LK19b, KV19}. Another approach constructs
the generating vector $\bsz$ to minimize some search criterion, with the
aim to directly control the approximation error, see, e.g., \cite{KSW06,
KWW09c, CKNS20, CKNS21}. We follow the latter approach in this paper.

We measure the approximation error of the algorithm $A_n$ in the
weighted Korobov space in the worst-case setting under the $L_2$ and
$L_\infty$ norms:
\begin{align}\label{eq:def:wce}
 e^{\rm wor\mbox{-}app} (A_n;L_q)
 &\,:=\, \sup_{\|f\|_{d,\alpha,\bsgamma} \leq 1} \|f - A_n(f)\|_{L_q ([0,1]^d)},
 &
 q &\in \{2,\infty\},
\end{align}
where $\|f\|_{d,\alpha,\bsgamma}$ denotes our Korobov space norm (see
\eqref{eq:norm} below). Our goal is in constructing the lattice generating
vector $\bsz$ to have the \emph{largest possible rate of convergence} $r$
in $e^{\rm wor\mbox{-}app} (A_n;L_q) = \calO(n^{-r+\delta})$ for
arbitrarily small $\delta>0$, with the \emph{implied constant independent
of $d$} under appropriate conditions on the weight parameters~$\bsgamma$.
This is the concept of \emph{strong tractability}, see, e.g.,
\cite{NW08,NW10,NW12}.

Many papers study general multivariate approximation problems under
different assumptions on the available information. The class
$\Lambda^{\rm all}$ of \emph{arbitrary linear information} allows all
linear functionals, while the class $\Lambda^{\rm std}$ of \emph{standard
information} allows only function values. It is still an open problem
whether algorithms in $\Lambda^{\rm std}$ can achieve the same convergence
rate as $\Lambda^{\rm all}$. Our approximation~\eqref{eq:Af} involves
function values at lattice points, so it falls under the
class~$\Lambda^{\rm std}$.
Table~\ref{tab:conr} lists the optimal or best-known convergence rates in
the weighted Korobov space, comparing general results from $\Lambda^{\rm
all}$ and $\Lambda^{\rm std}$ to what can be achieved by lattice
algorithms. (It needs to be noted that different papers use different
conventions for the smoothness parameter $\alpha$ and therefore might
use $2\alpha$ where we use $\alpha$.) We clearly see that lattice
algorithms are not optimal. Indeed, it was proved in \cite{BKUV17} that
the best possible convergence rate for lattice-based algorithms is only
$\alpha/4$. However, lattice algorithms are easy and efficient to
construct and implement, compared to the general results from
$\Lambda^{\rm std}$ which are typically non-constructive, see, e.g.,
\cite{KU21, NSU21}.

\begin{table}
\caption{Comparison of convergence rates for approximation
algorithms in the weighted Korobov space with smoothness parameter
$\alpha>1$; in this paper we extend the lattice algorithm results to composite $n$ and general weights}\label{tab:conr}
\centering
\begin{tabular}[t]{ l l l l}
  \toprule
   & $\Lambda^{\rm all}$ & $\Lambda^{\rm std}$  & Lattice algorithms   \\
  \midrule 
  $L_2$  & $ \displaystyle\frac{\alpha}{2} $ \cite{NSW04}
                    & $\geq \displaystyle\frac{\alpha}{2} \frac{1}{1 + 1/\alpha} $ \cite{KWW09a}
                    & $\displaystyle\frac{\alpha}{4} $ \cite{BKUV17, KSW06, CKNS20}\\[2mm]
  $L_{\infty}$  &  $\displaystyle\frac{\alpha-1}{2} $  \cite{KWW08a} &  $\geq \displaystyle\frac{\alpha-1}{2} \frac{1}{1 + 1/\alpha} $ \cite{KWW09a} & $\geq \begin{cases}
  \displaystyle\frac{\alpha -1}{4}  & \mbox{ for } \alpha > 1 \\[3mm]
  \displaystyle\frac{\alpha -1}{2} \displaystyle\frac{1}{2 - 1/\alpha} & \mbox{ for } \alpha > 2
  \end{cases} $ \cite{KWW09c} \\
  \bottomrule
\end{tabular}
\end{table}

The lattice approximation~\eqref{eq:Af} with prime $n$ was already analyzed in
the worst-case $L_2$ and $L_{\infty}$ settings in the weighted Korobov
space with product weights, see \cite{KSW06} and \cite{KWW09c},
respectively, and the last column of Table~\ref{tab:conr}. Motivated by
the need from PDE applications \cite{KKKNS21}, theoretical justification
for the CBC construction with general weights was developed in the $L_2$
setting in \cite{CKNS20}, requiring a substantially more complicated
analysis due to the difficulty of handling non-product weights in an
inductive argument. Subsequently, fast CBC implementations were developed
for special forms of weights (order-dependent, POD and SPOD weights) in
\cite{CKNS21}, again requiring substantially more complex computational
strategies, including the use of both fast Fourier and fast Hankel
transforms. While the fast CBC implementations from \cite{CKNS21} are
applicable to composite $n$, the theoretical justification in
\cite{CKNS20} is restricted to prime $n$. The current paper removes this
restriction by employing a different proof technique (see further below).

We use the same search criterion $S_{n,d,\alpha, \bsgamma}(\bsz)$
from \cite{CKNS20,CKNS21} (see \eqref{eq:Sd} below). Although the
approximation~\eqref{eq:Af} depends on the index set $\calA_d(M)$, the
quantity $S_{n,d,\alpha, \bsgamma}(\bsz)$ is independent of the index
set and the value of $M$. This is advantageous for the computational cost
since there is no need to create and maintain the index set during the CBC
construction. It was shown, e.g., in \cite{CKNS20} that the worst-case
$L_2$ approximation error for our lattice approximation~\eqref{eq:Af}
satisfies
\begin{align} \label{ieq:errMS}
  e^{\rm wor\mbox{-}app}(A_n;L_{2})  \,\le\, \bigg(\frac{1}{M} + M\, S_{n,d, \alpha,\bsgamma}(\bsz) \bigg)^{1/2}.
\end{align}
We show in Section~\ref{sec:WceLinf} that the worst-case $L_{\infty}$
approximation error satisfies
\begin{align*}
 & e^{\rm wor\mbox{-}app}(A_n;L_{\infty})  \\
& \leq
 \begin{cases}
  	\displaystyle \bigg( \sum_{\bsh \not\in \calA_d(M) } \frac{1}{r_{d,\alpha,\bsgamma}(\bsh)} +  3\, M  \left\lvert \calA_d(M) \right\rvert  S_{n,d, \alpha, \bsgamma}(\bsz )  \bigg)^{1/2}
		&  \mbox{if } \alpha > 1, \\
        \displaystyle \bigg( \sum_{\bsh \not\in \calA_d(M) } \frac{1}{r_{d,\alpha,\bsgamma}(\bsh)} +  3\, M \,
        \big[S_{n,d, \frac{\alpha}{2}, \sqrt{\bsgamma}}(\bsz)\big]^2 \bigg)^{1/2}
 		&  \mbox{if } \alpha > 2.
\end{cases}
\end{align*}
These error bounds hold for general weights and for prime and
composite~$n$. Note that each bound involves a sum of two terms,
representing the truncation error and the cubature error. We choose $M$ to
balance the two terms.

We establish in Theorem~\textup{\ref{thm:SUapp}} that the CBC construction
with a general $n$ achieves essentially the same bound on the quantity
$S_{n,d,\alpha,\bsgamma}(\bsz)$ as for prime $n$ in~\cite{CKNS20},
thus completing the theory for $L_2$ approximation. For $L_\infty$
approximation, the same CBC construction and bound on
$S_{n,d,\alpha,\bsgamma}(\bsz)$ can be applied when $\alpha>1$, while
for a higher smoothness $\alpha>2$ we can alternatively carry out the CBC
construction with $\alpha$ replaced by $\alpha/2$ and all weights
$\gamma_\setu$ replaced by $\sqrt{\gamma_\setu}$, and then revise the
bound accordingly. This yields the same convergence rates for $L_\infty$
approximation as in \cite{KWW09c}, thus extending the final entry in
Table~\ref{tab:conr} to general weights and composite $n$. Recall that the
fast implementations for special forms of weights in \cite{CKNS21} are
already applicable to composite~$n$.

The $L_2$ approximation result from this paper serves as an immediate
and crucial upper bound for lattice-based kernel methods \cite{KKKNS21,
Wah90, ZLH06, ZKH09}, defined by
\begin{align*}
  A^{\rm ker}_n(f)(\bsx) \,:=\, \sum_{k=1}^n a_k\, K(\bsx,\bst_k),
\end{align*}
where $\bst_k$ are lattice points as in~\eqref{eq:Af}, $K(\bsx,\bsy)$ is
the reproducing kernel of the weighted Korobov space (see \eqref{eq:ker}
below), and the coefficients $a_k$ are such that $A^{\rm ker}_n(f)$
interpolates $f$ at the $n$ lattice points $\bst_k$. These coefficients
$a_k$ can be found by solving a linear system which has a circulant
matrix, so they can be obtained efficiently using the fast Fourier
transform. As explained in \cite{KKKNS21}, kernel methods are optimal for
$L_q$ approximation among all algorithms that use the same points. Thus
with the same lattice generating vector $\bsz$ we have
\begin{align*}
  e^{\rm wor\mbox{-}app}(A^{\rm ker}_n;L_q) \,\le\, e^{\rm wor\mbox{-}app}(A_n;L_q),
  \quad 1 \le q \le \infty,
\end{align*}
which is why our error bound and CBC construction can be applied directly.
Recall that our approximation $A_n(f)$ in~\eqref{eq:Af} depends on
the index set $\calA_d(M)$, whereas the kernel method $A^{\rm
ker}_n(f)$ involves no index set. Therefore, the kernel method can
completely by-pass the index set, since our search criterion $S_{n,d,
\alpha,\bsgamma}(\bsz)$ is also independent of the index set.

It is important to note that the new results in this paper cannot be
obtained by trivial generalisations of existing results for prime~$n$.
The analysis for a lattice generating vector $(\bsz,z_s)\in\bbU_n^s$
typically involves a certain sum over vectors $(\bsell,\ell_s)\in\bbZ^s$
satisfying the congruence
\[
   \bsell\cdot \bsz + \ell_s z_s \equiv 0 \pmod n.
\]
Existing averaging techniques for numerical integration with non-prime $n$
work by first splitting the sum based on whether or not $\ell_s$ is a
multiple of $n$ and then counting the repeated values of $\bsell\cdot\bsz
\bmod n$, see, e.g., \cite[proof of Theorem~5.8]{DKS13}. An attempt to use
that technique for the approximation problem with non-prime $n$ led to an
undesirable error bound. Instead, in this paper we change the splitting to
be based on the values of $\gcd(\ell_s,n)$ together with
$\gcd(\bsell\cdot\bsz,n)$, see Lemma~\textup{\ref{lem:numg}} and
Lemma~\textup{\ref{lem:thetau}} ahead. This new proof technique can also
be used in the context of integration, leading to the same result there
but with a shorter proof.

Embedded or extensible lattice sequences for integration have been analysed in the past, 
see, e.g., \cite{CKN06, DPW08, HHLL00, HN03}.
Here we follow a similar approach to \cite{CKN06} where we 
apply a mini-max strategy based on the ratios of  a dimension-wise decomposition of 
$S_{n,d,\alpha,\bsgamma}(\bsz)$ to construct the embedded lattice sequences for approximation (see Algorithm~\ref{alg:CBCemb} below)
for $n$ being successive powers of a prime, i.e., $n = p^m$ for $m_1 \le m \le m_2$.
With $N = p^{m_2}$, Theorem~\ref{thm:upbX} proves that the embedded sequence is only a factor of $(\log N)^{\alpha}$
worse than lattice rules with a fixed number of points.
Numerical results in Section~\textup{\ref{sec:NumRes}} confirms this in practice.

The paper is organised as follows. Section~\textup{\ref{sec:Krob}} introduces the
weighted Korobov spaces. Section~\textup{\ref{sec:WceL2}} provides a
brief review of known results for $L_2$ approximation and states the
new error bound for general~$n$. Section~\textup{\ref{sec:cbc}} is devoted to
the technical proof of the main theorem. Section~\textup{\ref{sec:WceLinf}}
derives an upper bound on the worst-case error in $L_{\infty}$
norm and the corresponding search criterion for CBC construction.
Section~\textup{\ref{sec:Emb_lat}} constructs good
generating vectors of embedded lattice sequences for a range of number of
points.
Section~\textup{\ref{sec:NumRes}} presents numerical results which
confirm the convergence rates of our search criteria.

\section{Weighted Korobov spaces} \label{sec:Krob}

For $\alpha>1$ and positive weight parameters $\bsgamma
:=\{\gamma_\setu\}_{\setu \subset \bbN}$, we consider the Hilbert space
$H_d$ of one-periodic $L_2$ functions defined on $[0,1]^d$
with absolutely convergent Fourier series, with norm defined by
\begin{align} \label{eq:norm}
  \|f\|_{d,\alpha, \bsgamma}^2
  \,:=\, \sum_{\bsh\in\bbZ^d} \big\lvert \widehat{f} (\bsh) \big\rvert ^2\,r_{d,\alpha,\bsgamma}(\bsh) ,
\end{align}
where
\begin{align} \label{eq:rda}
 r_{d,\alpha,\bsgamma}(\bsh) \,:=\, \frac{1}{\gamma_{\supp(\bsh)}}\,
 \prod_{j\in \supp(\bsh)} \lvert h_j \rvert ^{\alpha},
\end{align}
with $\supp(\bsh) := \{1\le j\le d : h_j \ne 0\}$. The parameter $\alpha$
characterizes the rate of decay of the Fourier coefficients in the norm, so it
is a smoothness parameter. We fix the scaling of the weights by setting
$\gamma_\emptyset := 1$, so that the norm of a constant function in $H_d$
matches its $L_2$ norm and  $L_{\infty}$ norm. Various forms of weight
parameters appear in the literature for multivariate integration, such as
product weights \cite{SW98, SW01}, order-dependent weights \cite{DSWW06},
POD weights \cite{GKNSSS15, GKNSS18b, KN16, KSS12} and SPOD weights
\cite{DKLS16, KKS20}. In the simplest case of product weights, there is
one weight $\gamma_j$ for each coordinate index $j$, and the weight for a
set $\setu$ of coordinate indices is given by $\gamma_\setu =
\prod_{j\in\setu} \gamma_j$. The other types of weights are more
complicated and are motivated by applications, see the references above.

Some authors refer to this space as the \emph{weighted Korobov space}, see
\cite{SW01} for product weights and \cite{DSWW06} for general weights,
while others call this a weighted variant of the \emph{periodic Sobolev
space with dominating mixed smoothness}~\cite{BKUV17}.
We remark again that some references might use $2\alpha$ where we use $\alpha$
and also sometimes the weight parameters might occur squared in the Hilbert norm.

When $\alpha\geq 2$ is an even integer, it can be shown that
\begin{align*}
 \|f\|_{d,\alpha,\bsgamma}^2 \! = \! \!
 \sum_{\setu\subseteq\{1:d\}} \! \! \frac{1}{(2\pi)^{\alpha \lvert\setu\rvert}}\frac{1}{\gamma_{\setu}}
 \int_{[0,1]^{\lvert \setu \rvert}} \!\!\!
 \bigg(\int_{[0,1]^{d-\lvert\setu\rvert}} \!\!\! \bigg(\prod_{j\in\setu}\frac{\partial}{\partial x_j}\bigg)^{\!\!\frac{\alpha}{2}}f(\bsx)
 \,\rd\bsx_{\{1:d\}\setminus\setu}\bigg)^2 \!
 \rd\bsx_{\setu}.
\end{align*}
So $f$ has square-integrable mixed partial derivatives of order $\alpha/2$
over all possible subsets of variables. Here and elsewhere in the
paper, $\{1\mathbin{:}d\} = \{1,2,\ldots,d\}$ and $\bsx_\setu =
(x_j)_{j\in\setu}$.

The inner product of $H_d$ is given by
\begin{align}\label{eq:ip}
  \left \langle f,g \right \rangle_{d, \alpha,\bsgamma}
  \, := \,
  \sum_{\bsh \in \bbZ^d} \widehat{f} (\bsh) \, \overline{\widehat{g} (\bsh)} \, r_{d,\alpha,\bsgamma}(\bsh),
\end{align}
and the norm is $\| \cdot \|_{d, \alpha,\bsgamma} = \langle \cdot,\cdot
\rangle_{d, \alpha,\bsgamma}^{1/2}$ which is consistent with
\eqref{eq:norm}. The reproducing kernel for $H_d$ is
\begin{align} \label{eq:ker}
K_d(\bsx,\bsy) = \sum_{\bsh \in \bbZ^d}
				\frac{ \rme^{2 \pi \rm{i} \bsh \cdot (\bsx - \bsy)} }{ r_{d,\alpha,\bsgamma}(\bsh)  },
\end{align}
which satisfies (i) $K_d(\bsx, \bsy) = K_d(\bsy, \bsx)$ for all
$\bsx, \bsy \in [0,1]^d$; (ii) $K_d(\cdot, \bsy) \in H_d$ for all
$\bsy \in [0,1]^d$; (iii) $\left \langle f , K_d(\cdot, \bsy) \right
\rangle_d = f(\bsy)$ for all $f \in H_d$ and all $\bsy \in [0,1]^d$.
The last property is known as the \emph{reproducing property}. Note
that $r_{d,\alpha,\bsgamma}(\bsh) =r_{d,\alpha, \bsgamma} (-\bsh) $ and
therefore $K_d(\cdot,\cdot)$ takes real values and can be written as a sum
of cosine functions.

To simplify our notation, from this point on we write
$$
r(\bsh) \, := \, r_{d,\alpha,\bsgamma}(\bsh),
$$
except when we need to show the explicit dependence on $d$, $\alpha$
and $\bsgamma$.

\section{Worst-case $L_{2}$ error with general $n$} \label{sec:WceL2}

The error for the lattice approximation in~\eqref{eq:Af} clearly splits
into two terms, i.e., the truncation error and the cubature error,
\begin{align} \label{eq:aperr}
 (f-A_n(f))(\bsx)
 \,=\, \sum_{\bsh \not\in \calA_{d}(M)}  \widehat{f} (\bsh) \, \rme^{2 \pi \ri \bsh \cdot {\bsx}}
 + \sum_{\bsh \in \calA_{d}(M)} \Big( \widehat{f} (\bsh) - \widehat{f}^a (\bsh) \Big)\,  \rme^{2 \pi \ri \bsh \cdot {\bsx}}.
\end{align}
When measured in the $L_2$ norm, we have
\begin{align*}
\|f - A_n(f) \|^2_{L_2([0,1]^d)} = \sum_{\bsh \not\in \calA_{d}(M)} \lvert  \widehat{f} (\bsh) \rvert^2 + \sum_{\bsh \in \calA_{d}(M)} \lvert\widehat{f} (\bsh) -  \widehat{f}^a (\bsh) \rvert^2.
\end{align*}
From \cite{CKNS20} we have the following bound on the worst-case $L_2$
approximation error~\eqref{eq:def:wce} where we now denote explicitly the
dependence on the generating vector $\bsz$ and the value of $M$ in the notation
\begin{align} \label{eq:bal}
  e^{\rm wor\mbox{-}app}_{n,d,M} (\bsz;L_{2})
  \,\le\, \bigg(\frac{1}{M} + E_{n,d, \alpha, \bsgamma}(\bsz)\bigg)^{1/2}
  \,\le\, \bigg(\frac{1}{M} + M\, S_{n,d, \alpha,\bsgamma}(\bsz ) \bigg)^{1/2},
\end{align}
with
\begin{align} \label{eq:Sd}
  E_{n,d, \alpha, \bsgamma}(\bsz)
  &\,:=\, \sum_{\bsh\in\calA_d(M)}
  \sum_{\substack{\bsell\in\bbZ^d\setminus\{\bszero\} \\ \bsell\cdot\bsz\equiv_n 0}}
  \frac{1}{r_{d,\alpha, \bsgamma} (\bsh + \bsell)}
  ,
  \nonumber \\
  S_{n,d,\alpha, \bsgamma}(\bsz)
  &\,:=\, \sum_{\bsh\in\bbZ^d} \frac{1}{r_{d,\alpha,\bsgamma}(\bsh)}
  \sum_{\substack{\bsell\in\bbZ^d\setminus\{\bszero\} \\ \bsell\cdot\bsz\equiv_n 0}}
  \frac{1}{r_{d,\alpha, \bsgamma} (\bsh + \bsell)}
  ,
\end{align}
where $\equiv_n$ means congruence modulo $n$.

The advantage of the search criterion $S_{n,d, \alpha, \bsgamma}(\bsz
)$ from \cite{CKNS20} over $ E_{n,d, \alpha, \bsgamma}(\bsz ) $ from
\cite{KSW06} is that there is no dependence on the index set $\calA_d(M)$,
thus the error analysis is simpler and the construction cost is lower.

To handle general weights, instead of working directly with the error
criterion $ S_{n,d, \alpha, \bsgamma}(\bsz ) $, the CBC construction
in \cite{CKNS20} works with a \emph{dimension-wise decomposition} of $
S_{n,d, \alpha, \bsgamma}(\bsz ) $, see
Lemma~\textup{\ref{lem:Sdecomp}} and Algorithm~\textup{\ref{alg:CBCapp}}
below, both are valid for general $n$. Lemma~\textup{\ref{lem:Sdecomp}}
can be shown by a recursive decomposition as in \cite[Lemma~3.2]{CKNS20}
or it can be proved directly by induction.

\begin{lemma} \label{lem:Sdecomp}
Given $n \geq 2$ \textnormal{(}prime or composite\textnormal{)}, $d \geq
1$, $\alpha>1$, and weights $\bsgamma=\{\gamma_\setu\}_{\setu\subset
\bbN}$, we can write
\begin{align*} 
  S_{n,d, \alpha, \bsgamma}(\bsz )
  \,=\, \sum_{s=1}^d T_{n,d,s,\alpha,\bsgamma} \big(z_1,\ldots,z_s\big),
\end{align*}
where, for each $s=1,2,\ldots,d$,
\begin{align} \label{eq:Tds}
  T_{n,d,s,\alpha,\bsgamma} \big(z_1,\ldots,z_s\big)
  \,:=\, \sum_{\setw\subseteq\{s+1:d\}}
  [2\zeta(2\alpha)]^{\lvert\setw\rvert}\, \theta_{n,s,\alpha} \big(z_1,\ldots,z_s;\{\gamma_{\setu\cup\setw}\}_{\setu\subseteq\{1:s\}}\big),
\end{align}
\begin{align} \label{eq:theta}
  \theta_{n,s,\alpha} \big(z_1,\ldots,z_s;\{\beta_\setu\}_{\setu\subseteq\{1:s\}}\big)
  \,:=\, \sum_{\bsh\in\bbZ^s} \sum_{\substack{\bsell\in\bbZ^s,\;\ell_s\ne 0 \\ \bsell\cdot (z_1,\ldots,z_s)\equiv_n 0}}
  \frac{\beta_{\supp(\bsh)}}{r'(\bsh)}
  \frac{\beta_{\supp(\bsh+\bsell)}}{r'(\bsh+\bsell)},
\end{align}
with $r'(\bsh) := \prod_{j\in\supp(\bsh)} \vert h_j\vert^\alpha$.
\end{lemma}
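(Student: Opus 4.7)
The plan is to derive the identity by a single rearrangement of the double sum defining $S_{n,d,\alpha,\bsgamma}(\bsz)$, partitioning it according to $s := \max\{j : \ell_j \ne 0\}$, the largest coordinate at which $\bsell$ is nonzero. Since $\bsell \ne \bszero$, this index is well defined and lies in $\{1,\ldots,d\}$, so, after substituting $1/r_{d,\alpha,\bsgamma}(\bsh) = \gamma_{\supp(\bsh)}/r'(\bsh)$, I obtain
\[
  S_{n,d,\alpha,\bsgamma}(\bsz)
  = \sum_{s=1}^d \sum_{\bsh \in \bbZ^d}
    \sum_{\substack{\bsell \in \bbZ^d,\; \ell_s \ne 0 \\ \ell_{s+1}=\cdots=\ell_d=0 \\ \bsell\cdot\bsz \equiv_n 0}}
    \frac{\gamma_{\supp(\bsh)}\,\gamma_{\supp(\bsh+\bsell)}}{r'(\bsh)\,r'(\bsh+\bsell)}.
\]
The congruence depends only on $(\ell_1,\ldots,\ell_s)$ and $(z_1,\ldots,z_s)$, and the vanishing of $\ell_j$ for $j>s$ forces $(\bsh+\bsell)_j = h_j$ for every $j>s$.

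For each fixed $s$, I would then split $\bsh = (\bsh_{\{1:s\}},\bsh_{\{s+1:d\}})$ and introduce $\setw := \supp(\bsh_{\{s+1:d\}}) \subseteq \{s+1:d\}$. Since the tail coordinates of $\bsh$ and $\bsh+\bsell$ coincide, $\setw$ is also the tail support of $\bsh+\bsell$, so
\[
  \supp(\bsh) = \supp(\bsh_{\{1:s\}}) \sqcup \setw, \qquad \supp(\bsh+\bsell) = \supp((\bsh+\bsell)_{\{1:s\}}) \sqcup \setw,
\]
and $r'$ factorises over the blocks $\{1:s\}$ and $\{s+1:d\}$. Reorganising the sum over $\bsh_{\{s+1:d\}} \in \bbZ^{d-s}$ by first choosing $\setw$ and then summing over nonzero values of each $h_j$ for $j\in\setw$ yields the tail factor $[2\zeta(2\alpha)]^{|\setw|}$, while the two weight factors become $\gamma_{\supp(\bsh_{\{1:s\}})\cup\setw}$ and $\gamma_{\supp((\bsh+\bsell)_{\{1:s\}})\cup\setw}$.

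For each fixed $s$ and $\setw$, the remaining inner sum runs over $\bsh_{\{1:s\}} \in \bbZ^s$ and $\bsell \in \bbZ^s$ with $\ell_s\ne 0$ and $\bsell\cdot(z_1,\ldots,z_s) \equiv_n 0$. Taking $\beta_\setu := \gamma_{\setu \cup \setw}$ for $\setu \subseteq \{1:s\}$ identifies this inner sum exactly as $\theta_{n,s,\alpha}\bigl(z_1,\ldots,z_s;\{\gamma_{\setu\cup\setw}\}_{\setu\subseteq\{1:s\}}\bigr)$. Multiplying by $[2\zeta(2\alpha)]^{|\setw|}$ and summing over $\setw \subseteq\{s+1:d\}$ produces $T_{n,d,s,\alpha,\bsgamma}(z_1,\ldots,z_s)$ as defined in~\eqref{eq:Tds}, and the outer sum over $s = 1,\ldots,d$ completes the claim.

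The main obstacle is purely bookkeeping: one must verify that the disjoint splittings of $\supp(\bsh)$ and $\supp(\bsh+\bsell)$ into an ``upper'' part contained in $\{1:s\}$ and a common ``lower'' part $\setw \subseteq \{s+1:d\}$ make the $d$-dimensional weight parameters $\gamma_\cdot$ collapse into the $s$-dimensional family $\{\gamma_{\setu\cup\setw}\}_{\setu\subseteq\{1:s\}}$ in exactly the pattern required by the definition of $\theta_{n,s,\alpha}$ in \eqref{eq:theta}. Note that the argument makes no use of whether $n$ is prime or composite, nor of the choice $\bsz \in \bbU_n^d$ versus $\bsz \in \bbZ^d$, so the decomposition is valid in the generality stated.
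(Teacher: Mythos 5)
Your proof is correct: partitioning the inner sum over $\bsell\ne\bszero$ according to $s=\max\{j:\ell_j\ne 0\}$, observing that the congruence then involves only $(z_1,\ldots,z_s)$ while the tail coordinates of $\bsh$ and $\bsh+\bsell$ coincide (so each $j\in\setw$ contributes a factor $\sum_{h\ne 0}|h|^{-2\alpha}=2\zeta(2\alpha)$ and the weights collapse to $\beta_\setu=\gamma_{\setu\cup\setw}$), reproduces \eqref{eq:Tds}--\eqref{eq:theta} exactly, and since all terms are nonnegative the rearrangements are justified by Tonelli. This is essentially the argument the paper has in mind: your one-pass partition is the unrolled form of the recursive decomposition of \cite[Lemma~3.2]{CKNS20} (equivalently, the direct induction) that the paper cites, and your closing remark that primality of $n$ plays no role matches the paper's claim that the lemma holds for general $n$.
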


We remark that the expression \eqref{eq:theta} takes as input argument
a sequence of numbers $\beta_\setu$ indexed by $\setu \subseteq
\{1\mathbin{:}s\}$. The notation
$\{\gamma_{\setu\cup\setw}\}_{\setu\subseteq\{1:s\}}$ in~\eqref{eq:Tds}
indicates that we take different input arguments $\beta_\setu =
\gamma_{\setu\cup\setw}$ by varying $\setw$.

\begin{algo} \label{alg:CBCapp}
Given $n \geq 2$ \textnormal{(}prime or composite\textnormal{)}, $d \geq
1$, $\alpha>1$, and weights $\{\gamma_\setu\}_{\setu\subset \bbN}$,
 construct the generating vector $\bsz^*
= (z_1^*, \ldots, z_d^*)$ as follows: for each $s = 1,
\ldots,d$, with $z_1^*, \ldots, z_{s-1}^*$ fixed, choose $z_s$ from
$$\bbU_n = \left \{ z \in \bbZ \,: \, 1 \leq z\leq n-1 \, \, \textnormal{and} \, \, \gcd(z,n)=1  \right\} $$
to minimize $T_{n,d,s,\alpha,\bsgamma}(z_1^*, \ldots, z_{s-1}^*,
z_s)$ defined in~\eqref{eq:Tds}.
\end{algo}

Theorem~\textup{\ref{thm:SUapp}} below is a non-trivial extension of
\cite[Theorem~3.5]{CKNS20} to allow for composite $n$, showing an upper
bound on the search criterion $ S_{n,d, \alpha, \bsgamma}(\bsz ) $
guaranteed by the CBC construction. We devote
Section~\textup{\ref{sec:cbc}} to its technical proof.
Theorem~\textup{\ref{thm:errL2}} below is then a direct consequence of
\eqref{eq:bal} and Theorem~\textup{\ref{thm:SUapp}}, by setting $M$ to
satisfy $\frac{1}{M} =  M\, S_{n,d, \alpha, \bsgamma}(\bsz)$, showing
that the CBC construction for composite $n$ also achieves the best
possible convergence rate for $L_2$ approximation using lattice-based
algorithms as concluded in \cite{BKUV17}. Theorem~\textup{\ref{thm:errL2}}
is therefore a non-trivial extension of \cite[Theorem~3.6]{CKNS20} to
general $n$.

\begin{theorem} \label{thm:SUapp}
Given $n \geq 2$ \textnormal{(}prime or composite\textnormal{)}, $d \geq
1$, $\alpha>1$, and weights $\bsgamma=\{\gamma_\setu\}_{\setu\subset
\bbN}$, the generating vector $\bsz$ obtained from the CBC
construction following Algorithm~\textup{\ref{alg:CBCapp}}, satisfies for
all $\lambda \in (\frac{1}{\alpha},1]$,
\begin{align} \label{ieq:SUapp}
S_{n,d, \alpha, \bsgamma}(\bsz )
\, \leq \, \Bigg[
			\frac{\kappa}{\varphi(n)}
		\bigg( \sum_{ \emptyset \neq \setu \subseteq \{1:d\}  } \lvert \setu \rvert\, \gamma_{\setu}^{\lambda}
       [2 \zeta(\alpha \lambda)]^{\lvert \setu \rvert} \bigg)
		\bigg( \sum_{ \setu \subseteq \{1:d\}  } \gamma_{\setu}^{\lambda}
       [2 \zeta(\alpha \lambda)]^{\lvert \setu \rvert}   \bigg)
			\Bigg]^{1/\lambda},
\end{align}
where $\kappa:= 2^{2\alpha \lambda + 1} +1 $, $\zeta(x)$ denotes the
Riemann zeta function, i.e., $\zeta(x) := \sum_{h=1}^{\infty}
\frac{1}{h^x}$, and $\varphi(n)$ is the cardinality of $\bbU_n$, i.e., the
Euler's totient function.
\end{theorem}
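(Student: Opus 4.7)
The plan is to apply a $\lambda$-power CBC averaging argument, with the composite-$n$ case handled by a new counting lemma for linear congruences in the unit group $\bbU_n$. Starting from the decomposition in Lemma~\ref{lem:Sdecomp}, I would bound each $T_{n,d,s,\alpha,\bsgamma}(z_1^*,\ldots,z_s^*)$ separately. Since Algorithm~\ref{alg:CBCapp} selects $z_s^*$ to minimize $T_{n,d,s,\alpha,\bsgamma}(z_1^*,\ldots,z_{s-1}^*,\,\cdot\,)$ over $\bbU_n$, and any minimum of a nonnegative quantity is at most its $\lambda$-power-mean, one obtains
\begin{align*}
  [T_{n,d,s,\alpha,\bsgamma}(z_1^*,\ldots,z_s^*)]^\lambda
  \,\le\, \frac{1}{\varphi(n)}\sum_{z_s\in\bbU_n}[T_{n,d,s,\alpha,\bsgamma}(z_1^*,\ldots,z_{s-1}^*,z_s)]^\lambda.
\end{align*}
Using the subadditivity $(\sum_i a_i)^\lambda\le\sum_i a_i^\lambda$ valid for $\lambda\in(0,1]$ and $a_i\ge 0$, I would distribute the $\lambda$-th power inside $T_{n,d,s,\alpha,\bsgamma}$ (over $\setw\subseteq\{s+1:d\}$) and then inside $\theta_{n,s,\alpha}$ (over $(\bsh,\bsell)$).

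Swapping the finite sum over $z_s\in\bbU_n$ with the sum over $(\bsh,\bsell)$, the crucial quantity becomes the count
\[
  \#\big\{z_s\in\bbU_n \,:\, \ell_s\, z_s + \ell_1 z_1^* + \cdots + \ell_{s-1} z_{s-1}^* \equiv 0\pmod n\big\}.
\]
This is the main obstacle and the technical novelty for composite~$n$: in the prime case this count is $0$ or $1$ whenever $\ell_s\not\equiv_n 0$, but for composite $n$ the solvability and multiplicity of solutions depend on $\gcd(\ell_s,n)$ together with a compatibility condition on the right-hand side. I would invoke Lemma~\ref{lem:numg}, which splits simultaneously on $\gcd(\ell_s,n)$ and $\gcd(\bsell\cdot\bsz,n)$, and combine it with the averaged bound on $\theta_{n,s,\alpha}$ from Lemma~\ref{lem:thetau}. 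This delivers an estimate that asymptotically matches the prime-$n$ case after dividing by $\varphi(n)$, and crucially allows the remaining sums over $(\bsh,\bsell)$ to factor coordinate-by-coordinate.

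Once the constraint $\bsell\cdot\bsz\equiv_n 0$ is removed, the factored sums over $\bsh,\bsell\in\bbZ^s$ evaluate in closed form using the Riemann zeta function; the constant $\kappa=2^{2\alpha\lambda+1}+1$ arises from controlling the singular case $\ell_j+h_j=0$ via a standard estimate of the type $\sum_{\ell\ne 0,-h}|\ell|^{-\alpha\lambda}|\ell+h|^{-\alpha\lambda}\le \mathrm{const}\cdot |h|^{-\alpha\lambda}\zeta(\alpha\lambda)$. Inserting the input weights $\gamma_{\setu\cup\setw}^\lambda$, consolidating the $\setw$-factors $[2\zeta(2\alpha)]^{\lambda|\setw|}$ into $[2\zeta(\alpha\lambda)]^{|\setw|}$ (using $\zeta(2\alpha)\le\zeta(\alpha\lambda)$ together with $x^\lambda\le x$ for $x\ge 1$), and then summing over $\setw\subseteq\{s+1:d\}$ and $s=1,\ldots,d$, I would observe that a fixed nonempty $\setu\subseteq\{1:d\}$ is counted $|\setu|$ times (once per $s\in\setu$, since $\ell_s\ne 0$ forces $s$ into the relevant support). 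This reproduces the factor $|\setu|$ in the first sum of~(\ref{ieq:SUapp}), while the second sum arises from the complementary support. Raising to the $1/\lambda$ power yields the claimed bound.
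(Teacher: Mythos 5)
Your proposal follows essentially the same route as the paper's proof: the dimension-wise decomposition of Lemma~\ref{lem:Sdecomp}, the minimum-versus-$\lambda$-power-mean step over $z_s\in\bbU_n$, the gcd-based solution count of Lemma~\ref{lem:numg} feeding into the averaged bound of Lemma~\ref{lem:thetau}, and the final combination over $\setw$ and $s$ that produces the factor $\lvert\setu\rvert$ exactly as in the argument inherited from the prime-$n$ case. The only slight inaccuracy is cosmetic: the constant $\kappa=2^{2\alpha\lambda+1}+1$ actually originates from the sub-case $g\nmid h_s$ in the splitting by $g=\gcd(\ell_s,n)$, via the estimate $\sum_{p\in\bbZ}\lvert pg+q\rvert^{-\alpha\lambda}\le 2^{\alpha\lambda}g^{-\alpha\lambda}\,[2\zeta(\alpha\lambda)]+\lvert q\rvert^{-\alpha\lambda}$, rather than from the diagonal $\ell_j+h_j=0$, but this does not affect the validity of the plan.
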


\begin{theorem} \label{thm:errL2}
Given $n \geq 2$ \textnormal{(}prime or composite\textnormal{)}, $d \geq
1$, $\alpha>1$, and weights $\bsgamma=\{\gamma_\setu\}_{\setu\subset
\bbN}$, the lattice approximation~\eqref{eq:Af}, with index set
\eqref{eq:AdM} and generating vector $\bsz$ obtained from the CBC
construction following Algorithm~\textup{\ref{alg:CBCapp}}, after taking
$M$ in \eqref{eq:AdM} to be
$$ M = \left( S_{n,d, \alpha, \bsgamma}(\bsz ) \right)^{-1/2} ,$$
satisfies for all $\lambda\in (\frac{1}{\alpha},1]$, 
\begin{align*}
   e^{\rm wor\mbox{-}app}_{n,d,M} (\bsz;L_{2})
  &\,\le\, \sqrt{2} \left[ S_{n,d, \alpha, \bsgamma}(\bsz )   \right]^{1/4} \\
  & \, \le \, \frac{\sqrt{2}\,\kappa^{1/(4\lambda)}}{\varphi(n)^{1/(4\lambda)}} \bigg(
  \sum_{\setu\subseteq\{1:d\}} \max(\lvert\setu\rvert,1)\,\gamma_{\setu}^\lambda\,
  [2\zeta(\alpha\lambda)]^{\lvert\setu\rvert}\bigg)^{1/2\lambda},
\end{align*}
where $\kappa$, $\zeta(\cdot)$ and $\varphi(\cdot)$ are as in
Theorem~\textup{\ref{thm:SUapp}}. The exponent of $\varphi(n)^{-1}$ can be
arbitrarily close to
\[
  \frac{\alpha}{4}.
\]
The constant is independent of $d$ if
\begin{align} \label{ieq:idpcon}
 \sum_{\setu\subset\bbN,\,\lvert\setu\rvert<\infty}  \gamma_{\setu}^{\frac{1}{\alpha-4\delta}}\,
  [2 \, \rme^{1/\rme} \, \zeta\big(\tfrac{\alpha}{\alpha-4\delta}\big)]^{\lvert\setu\rvert}
  \,<\, \infty, \quad \delta >0.
\end{align}
\end{theorem}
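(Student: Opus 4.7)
My plan is to derive both inequalities directly from the a priori bound~\eqref{eq:bal} and Theorem~\textup{\ref{thm:SUapp}}; no new technical work is required beyond an elementary optimization in $M$ and a standard bound on $\max(|\setu|,1)$. First I would observe that the right-hand side of~\eqref{eq:bal}, viewed as a function of $M>0$, is minimized at $M = [S_{n,d,\alpha,\bsgamma}(\bsz)]^{-1/2}$, where the truncation and cubature contributions become equal. Plugging this choice back in gives $(1/M + M\,S)^{1/2} = (2\sqrt{S})^{1/2} = \sqrt{2}\,S^{1/4}$, which is precisely the first inequality in the theorem.

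Next I would substitute the bound from Theorem~\textup{\ref{thm:SUapp}} into this. The right-hand side of~\eqref{ieq:SUapp} is a product of two sums, raised to $1/\lambda$; to obtain a single clean summation I would bound each of
\begin{align*}
\sum_{\emptyset\ne \setu\subseteq\{1:d\}} |\setu|\,\gamma_{\setu}^{\lambda}\,[2\zeta(\alpha\lambda)]^{|\setu|}
\qquad\text{and}\qquad
\sum_{\setu\subseteq\{1:d\}} \gamma_{\setu}^{\lambda}\,[2\zeta(\alpha\lambda)]^{|\setu|}
\end{align*}
by $\sum_{\setu\subseteq\{1:d\}} \max(|\setu|,1)\,\gamma_{\setu}^{\lambda}\,[2\zeta(\alpha\lambda)]^{|\setu|}$. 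Their product is then at most the square of this single sum, so raising $S_{n,d,\alpha,\bsgamma}(\bsz)$ to the $1/4$-th power and combining with the first inequality yields the second inequality, with exponent $1/(4\lambda)$ on $\kappa/\varphi(n)$ and exponent $1/(2\lambda)$ on the combined sum.

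The convergence rate claim is then immediate: since $\lambda$ is allowed in $(1/\alpha,1]$, pushing $\lambda \downarrow 1/\alpha$ drives $1/(4\lambda)$ up to $\alpha/4$. For the $d$-independence, I would use the elementary inequality $\max(x,1)\le \rme^{x/\rme}$ for $x\ge 0$ (whose verification amounts to noting that $\rme^{x/\rme}-x$ attains its minimum $0$ at $x=\rme$), which allows the factor $\max(|\setu|,1)$ to be absorbed into the geometric base, turning $[2\zeta(\alpha\lambda)]^{|\setu|}$ into $[2\,\rme^{1/\rme}\,\zeta(\alpha\lambda)]^{|\setu|}$. Then taking $\lambda = 1/(\alpha-4\delta)$, which lies in $(1/\alpha,1]$ for all sufficiently small $\delta>0$ since $\alpha>1$, matches $\alpha\lambda$ with $\alpha/(\alpha-4\delta)$, so hypothesis~\eqref{ieq:idpcon} bounds the resulting sum uniformly in $d$; the prefactor $\kappa^{1/(4\lambda)}$ remains bounded for fixed $\delta$. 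There is no real obstacle here, as all the hard work has already been done in Theorem~\textup{\ref{thm:SUapp}}; the only care needed is in keeping the various exponents $1/\lambda$, $1/(2\lambda)$ and $1/(4\lambda)$ consistent and in tying the choice of $\lambda$ to the target rate $\alpha/4-\delta$ and to the exponents appearing in~\eqref{ieq:idpcon}.
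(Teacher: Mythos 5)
Your proposal is correct and follows exactly the route the paper intends: the paper itself only remarks that Theorem~\ref{thm:errL2} is a direct consequence of~\eqref{eq:bal} and Theorem~\ref{thm:SUapp} by choosing $M$ to equate $1/M$ with $M\,S_{n,d,\alpha,\bsgamma}(\bsz)$, together with the bound $\max(\lvert\setu\rvert,1)\le(\rme^{1/\rme})^{\lvert\setu\rvert}$ for the $d$-independence, and you have filled in precisely these steps (balancing $M$, bounding both factors in~\eqref{ieq:SUapp} by the single $\max(\lvert\setu\rvert,1)$ sum, and taking $\lambda=1/(\alpha-4\delta)$). No gaps.
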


The last part of the theorem used $\max(\lvert\setu\rvert,1) \leq (
\rme^{1/\rme})^{\lvert\setu\rvert} = (1.4446\ldots)^{\lvert\setu\rvert}$.

For $n = p^m$ being a power of a prime $p$, we have $1/\varphi(n) =
[1+1/(p-1)]/n \le 2/n$. For general $n$, it can be verified that
$1/\varphi(n) < 9/n$ for all $n<10^{30}$. Hence from the practical point
of view, $1/\varphi(n)$ can be considered as a constant factor times
$1/n$.

\section{Proof of Theorem~\textup{\ref{thm:SUapp}}} \label{sec:cbc}

Lemma~\textup{\ref{lem:thetau}} below provides the essential averaging
argument required in the proof of Theorem~\textup{\ref{thm:SUapp}}. We
begin by stating Lemma~\textup{\ref{lem:numg}} which is essential to
the proof of Lemma~\textup{\ref{lem:thetau}}. The proof of the first part
of Lemma~\textup{\ref{lem:numg}} can be found in a general textbook on
number theory, e.g., \cite[Chapters~5]{A76}. Based on the first part, the
second part of Lemma~\textup{\ref{lem:numg}} can be easily proved by using
the fact that the solutions are restricted to be coprime with $n$,
see, e.g., \cite[Lemma~5.5]{ACP10} or \cite[Theorem~2]{GP13}.

\begin{lemma} \label{lem:numg}
For any positive integer $n$, and given $\ell_s \in \bbZ$ and $c \in \bbZ$,
define $g = \gcd(\ell_s, n)$ and consider the linear congruence
\begin{align} \label{eq:cong1}
  c + \ell_s \, z_s \equiv_n 0 .
\end{align}
Then there are exactly $g$ solutions for $z_s \in \bbZ_n$ if $g \mid c$
and no solution if $g \nmid c$. Moreover, if we restrict the solutions to
$z_s \in \bbU_n$, i.e., $z_s \in \bbZ_n$ and $\gcd(z_s, n) = 1$, then there are
at most $g$ solutions
if also $\gcd(c, n) = g$ holds, with the same definition of $g = \gcd(\ell_s, n)$, and
no solutions otherwise.
\end{lemma}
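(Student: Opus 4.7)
The plan is to handle the two parts separately, treating the first part as a standard textbook fact and then deducing the second part via an explicit parametrisation of the solution set.

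For the first part, I would reduce to the classical theory of linear congruences. Write $\ell_s = g\,\ell'$ and $n = g\,n'$ where $g = \gcd(\ell_s,n)$ and $\gcd(\ell',n') = 1$. If $g \nmid c$ then reducing modulo $g$ immediately shows the congruence $c + \ell_s z_s \equiv_n 0$ has no solution. Otherwise write $c = g\,c'$ and reduce the congruence modulo $g$ to obtain the equivalent congruence $c' + \ell'\,z_s \equiv_{n'} 0$; since $\gcd(\ell',n')=1$, the inverse of $\ell'$ exists modulo $n'$ and gives a unique solution $z_0 \equiv -c'\,(\ell')^{-1} \pmod{n'}$. Lifting back to $\bbZ_n$, the full solution set is $\{z_0 + k\,n' : k = 0, 1, \ldots, g-1\}$, which has exactly $g$ elements. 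This is exactly the content of the textbook argument cited from \cite{A76}.

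For the second part, I would analyse when the $g$ lifted solutions $z_0 + k\,n'$ can be coprime to $n$. Since $g \mid c$ is necessary in any case, we always have $g \mid \gcd(c,n)$, so the remaining cases are $\gcd(c,n) = g$ and $\gcd(c,n) > g$. In the first case, $\gcd(c',n') = 1$, and combined with $\gcd(\ell',n') = 1$ the formula for $z_0$ gives $\gcd(z_0, n') = 1$; each lifted solution $z_0 + k\,n'$ inherits this, so the only obstruction to being coprime to $n$ is the interaction with the factor $g$ of $n$, and in the worst case all $g$ lifts are coprime to $n$, giving the claimed upper bound of $g$ solutions in $\bbU_n$. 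In the second case, let $d := \gcd(c',n') > 1$; then $d \mid n' \mid n$, and because $\gcd(\ell',d) = 1$ the reduced congruence forces $d \mid z_0$. Since $d \mid n'$, every lifted solution $z_0 + k\,n'$ is also divisible by $d$, so $\gcd(z_0 + k\,n', n) \ge d > 1$ for every $k$ and no solution in $\bbU_n$ exists.

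The only slightly delicate point is the case analysis based on $\gcd(c,n)$ rather than on $\gcd(c,n)/g$ directly, and verifying in the ``at most $g$'' case that the $n'$-coprimality of $z_0$ is preserved under the $n'$-shifts — both of which follow cleanly from the parametrisation above. No deep tools beyond the structure of solutions of a linear congruence are required; this is why the authors point to~\cite{ACP10,GP13} for this refinement.
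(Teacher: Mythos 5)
Your proof is correct. Note that the paper itself gives no proof of this lemma: it cites a textbook for the first part and two references for the second, so there is nothing to compare against line by line, but your argument is exactly the standard one those sources contain (division by $g$ to reach $c' + \ell' z_s \equiv_{n'} 0$ with $\gcd(\ell',n')=1$, then lifting the unique solution mod $n'$ to $g$ solutions mod $n$). Two small remarks. First, in the first part you say you ``reduce the congruence modulo $g$'' to obtain $c' + \ell' z_s \equiv_{n'} 0$; what you actually do is divide the congruence through by $g$ (reducing modulo $g$ is the separate step that shows $g \mid c$ is necessary) --- the mathematics is fine, only the wording is off. Second, the second part admits a one-line argument that bypasses your case analysis of the lifts: if $z_s \in \bbU_n$ solves the congruence then $c \equiv -\ell_s z_s \pmod n$, and since $z_s$ is a unit modulo $n$ this forces $\gcd(c,n) = \gcd(\ell_s z_s, n) = \gcd(\ell_s, n) = g$, so no solutions exist in $\bbU_n$ unless $\gcd(c,n)=g$; and the bound of at most $g$ solutions is immediate from $\bbU_n \subseteq \bbZ_n$ together with the first part. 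Your longer route through $d = \gcd(c',n')$ is also valid, just more work than needed.
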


\begin{lemma} \label{lem:thetau}
Given $n \geq 2$ \textnormal{(}prime or composite\textnormal{)}, for any
$s \geq 1$, any input sequence $\{\beta_{\setu}\}_{\setu \subseteq
\{1:s\}}$, all $(z_1, \ldots, z_{s-1}) \in \bbU_n^{s-1}$, and all $\lambda
\in (\frac{1}{\alpha},1]$, we have
\begin{align*}
&\frac{1}{\varphi(n)} \sum_{z_s \in \bbU_n} \left[ \theta_{n,s,\alpha} (z_1, \ldots,z_s; \{ \beta_{\setu  }\}_{ \setu \subseteq \{1:s\} } ) \right]^{\lambda} \notag \\
& \qquad \leq \frac{\kappa}{\varphi(n)}
\Bigg( \sum_{s \in \setu \subseteq \{1:s\}} \beta_{\setu}^{\lambda} [2 \zeta(\alpha\lambda)]^{\lvert\setu\rvert} \Bigg)
\Bigg( \sum_{\setu \subseteq \{1:s\}} \beta_{\setu}^{\lambda} [2 \zeta(\alpha\lambda)]^{\lvert\setu\rvert} \Bigg),
\end{align*}
where $\kappa$, $\zeta(\cdot)$ and $\varphi(\cdot)$ are as in
Theorem~\textup{\ref{thm:SUapp}}.
\end{lemma}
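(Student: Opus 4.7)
The plan is to bound $[\theta_{n,s,\alpha}]^\lambda$ via Jensen's inequality, average over $z_s \in \bbU_n$, and use Lemma~\ref{lem:numg} to reduce the inner average to an explicit counting bound. Because $\lambda \in (0,1]$ and the summands in \eqref{eq:theta} are non-negative, subadditivity of $t \mapsto t^\lambda$ yields
\[
[\theta_{n,s,\alpha}(z_1,\ldots,z_s;\{\beta_\setu\})]^\lambda \;\le\; \sum_{\bsh\in\bbZ^s}\sum_{\substack{\bsell\in\bbZ^s,\,\ell_s\ne 0\\ \bsell\cdot(z_1,\ldots,z_s)\equiv_n 0}} \frac{\beta_{\supp(\bsh)}^\lambda\,\beta_{\supp(\bsh+\bsell)}^\lambda}{[r'(\bsh)]^\lambda\,[r'(\bsh+\bsell)]^\lambda},
\]
and the hypothesis $\alpha\lambda > 1$ ensures that the one-dimensional series $\sum_{h\ne 0}|h|^{-\alpha\lambda} = 2\zeta(\alpha\lambda)$ arising later is finite.

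Averaging over $z_s \in \bbU_n$ and swapping sums, the inner quantity becomes the count $\#\{z_s \in \bbU_n : \bsell\cdot(z_1,\ldots,z_s)\equiv_n 0\}$. Setting $c := \ell_1 z_1 + \cdots + \ell_{s-1}z_{s-1}$ and $g := \gcd(\ell_s, n)$, Lemma~\ref{lem:numg} bounds this count by $g \cdot \mathbf{1}[\gcd(c, n) = g]$. I would then split the $\bsell$-sum by $g \mid n$, writing $\ell_s = gm$ with $m \ne 0$ and $\gcd(m, n/g) = 1$, and keep the constraint $\gcd(c, n) = g$ as an additional restriction on $\bsell' = (\ell_1,\ldots,\ell_{s-1})$. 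This double splitting---on both $\gcd(\ell_s, n)$ and $\gcd(c, n)$---is precisely the new technique announced in the introduction, replacing the prime-case dichotomy $n \mid \ell_s$ versus $n \nmid \ell_s$ of \cite{CKNS20}.

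To recover the product structure on the right-hand side, I split the outer sum according to whether $h_s = 0$ (so $s \in \supp(\bsh+\bsell)\setminus\supp(\bsh)$ and $\beta_{\supp(\bsh+\bsell)}^\lambda$ carries the ``$s\in\setu$'' marker) or $h_s \ne 0$ (so $\beta_{\supp(\bsh)}^\lambda$ carries the marker). The coordinates $j \ne s$ then decouple into independent one-dimensional sums, each contributing $1$ when inactive and at most $2\zeta(\alpha\lambda)$ when active; regrouping by support sets $\setu \subseteq \{1:s\}$ produces the two claimed factors, one with the $s \in \setu$ constraint and one without. The constant $\kappa = 2^{2\alpha\lambda+1}+1$ emerges from the triangle inequality $|h_s+\ell_s|^{\alpha\lambda} \le 2^{\alpha\lambda}(|h_s|^{\alpha\lambda}+|\ell_s|^{\alpha\lambda})$, used to cross-refer $\ell_s$ and $h_s+\ell_s$, together with a boundary contribution of $1$ from the degenerate case $h_s+\ell_s=0$.

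The main obstacle is the $s$-coordinate sum $\frac{1}{\varphi(n)}\sum_{\ell_s \ne 0}\gcd(\ell_s, n)\,F(\ell_s)$ with $F$ inherited from the denominator: the naive bound $\gcd(\ell_s, n) \le |\ell_s|$ yields the divergent series $\sum|\ell_s|^{1-\alpha\lambda}$ whenever $\alpha\lambda \le 2$, while we only assume $\alpha\lambda > 1$. Preserving convergence under this weaker assumption requires keeping $g$ explicit, exploiting the coprimality $\gcd(\ell_s/g, n/g) = 1$ to sparsify admissible $\ell_s$ at each scale $g \mid n$, and simultaneously using the indicator $\gcd(c, n) = g$ to absorb the remaining $g$-dependence into the uniform prefactor $1/\varphi(n)$ rather than into an $n$-dependent factor such as $\tau(n)/\varphi(n)$.
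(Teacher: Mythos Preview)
Your overall strategy---Jensen, average over $z_s$, Lemma~\ref{lem:numg}, split by divisors $g\mid n$---matches the paper, but two concrete steps in your proposal do not go through.

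First, the split ``$h_s=0$ versus $h_s\ne 0$'' is too coarse. When $h_s\ne 0$ and $\ell_s\ne -h_s$, the $s$-coordinate contributes $|h_s|^{-\alpha\lambda}|h_s+\ell_s|^{-\alpha\lambda}$, and you must sum this against the weight $g=\gcd(\ell_s,n)$. The substitution $\ell_s=g\tilde\ell_s$ only extracts a favourable power of $g$ from $|h_s+\ell_s|$ when $g\mid h_s$; if $g\nmid h_s$ the shift $h_s+\ell_s$ is not a multiple of $g$ and the argument stalls. The paper therefore splits three ways: (i) $\ell_s=-h_s$, (ii) $\ell_s\ne -h_s$ with $g\mid h_s$, (iii) $\ell_s\ne -h_s$ with $g\nmid h_s$. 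Cases (i)--(ii) are handled by the substitution you describe; case (iii) is the hard one and needs a separate idea: write $h_s=pg+q$ with $|q|\le g/2$, $q\ne 0$, and bound $\sum_{p\in\bbZ}|pg+q|^{-\alpha\lambda}\le 2^{\alpha\lambda}g^{-\alpha\lambda}\,2\zeta(\alpha\lambda)+|q|^{-\alpha\lambda}$ using $|1+q/(pg)|\ge 1/2$. Squaring this and subtracting the diagonal $|q|^{-2\alpha\lambda}$ is what produces the factor $2^{2\alpha\lambda}+2^{\alpha\lambda+1}\le 2^{2\alpha\lambda+1}$ in $\kappa$.

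Second, the inequality you cite, $|h_s+\ell_s|^{\alpha\lambda}\le 2^{\alpha\lambda}(|h_s|^{\alpha\lambda}+|\ell_s|^{\alpha\lambda})$, points the wrong way: we need an \emph{upper} bound on $|h_s+\ell_s|^{-\alpha\lambda}$, and your inequality only yields a lower bound on it. The constant $\kappa=2^{2\alpha\lambda+1}+1$ does not come from this triangle inequality but from the case-(iii) estimate above, combined with the ``$+1$'' from the $[2\zeta(\alpha\lambda)]^2\scrQ^2$ term in case (ii). Also, the $(s-1)$-dimensional coordinates do not decouple until \emph{after} you sum over all $g\mid n$ to remove the constraint $\gcd(\bsell\cdot\bsz,n)=g$; before that summation the $G^g$ pieces still carry a coupled restriction on $\bsell'$.
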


\begin{proof}
For notational convenience we write in this proof $\bsz :=
(z_1,\ldots,z_{s-1})\in\bbU_n^{s-1}$. From \eqref{eq:theta}, with a slight
change of notation for the indices $(\bsh,h_s)\in\bbZ^s$ and
$(\bsell,\ell_s)\in\bbZ^s$, and the inequality $\sum_k a_k \leq ( \sum_k
a_k^{\lambda} )^{1/\lambda}$ for all $a_k \geq 0$ and $\lambda \in
(1/\alpha,1]$, we obtain
\begin{align*}
&{\rm{Avg}}
:= \frac{1}{\varphi(n)} \sum_{z_s \in \bbU_n} \left[ \theta_{n,s,\alpha} (z_1, \ldots,z_s; \{ \beta_{\setu  }\}_{ \setu \subseteq \{1:s\} } ) \right]^{\lambda}\\
& \hphantom{:} \leq  \frac{1}{\varphi(n)} \sum_{z_s \in \bbU_n}
\sum_{(\bsh,h_s) \in \bbZ^s} \sum_{ \substack{ (\bsell,\ell_s) \in \bbZ^s, \ell_s \neq 0 \\ \bsell \cdot \bsz + \ell_s z_s  \equiv_n 0}}
 \frac{\beta^{\lambda}_{ \supp(\bsh , h_s)} }{r'(\bsh , h_s)^{\lambda}}
  \frac{\beta^{\lambda}_{ \supp((\bsh, h_s) + (\bsell,\ell_s))} }{r'((\bsh,h_s) + (\bsell,\ell_s))^{\lambda}} \\
 &\hphantom{:} =  \frac{1}{\varphi(n)}
 \sum_{(\bsh,h_s) \in \bbZ^s} \sum_{(\bsell,\ell_s) \in \bbZ^s, \ell_s \neq 0}
  \frac{\beta^{\lambda}_{ \supp(\bsh , h_s)} }{r'(\bsh , h_s)^{\lambda}}
  \frac{\beta^{\lambda}_{ \supp((\bsh, h_s) + (\bsell,\ell_s))} }{r'((\bsh,h_s) + (\bsell,\ell_s))^{\lambda}}
  \sum_{\substack{z_s \in \bbU_n \\ \bsell \cdot \bsz + \ell_s z_s  \equiv_n 0 }}  1 \\
  &\hphantom{:} \leq  \frac{1}{\varphi(n)} \sum_{\substack{ g \in \{1:n\} \\ g \mid n} } g
   \sum_{(\bsh,h_s) \in \bbZ^s}
   \sum_{\substack{(\bsell,\ell_s) \in \bbZ^s, \ell_s \neq 0 \\ \gcd(\bsell \cdot \bsz,n) = g \\ \gcd(\ell_s,n)=g}}
     \frac{\beta^{\lambda}_{ \supp(\bsh , h_s)} }{r'(\bsh , h_s)^{\lambda}}
  \frac{\beta^{\lambda}_{ \supp((\bsh, h_s) + (\bsell,\ell_s))} }{r'((\bsh,h_s) + (\bsell,\ell_s))^{\lambda}} \\
  &\hphantom{:} =  \frac{1}{\varphi(n)} \sum_{\substack{g \in \{1:n\} \\ g \mid n}} g
  	\sum_{h_s \in \bbZ}  \sum_{\substack{\ell_s \in \bbZ \setminus \{0\} \\\gcd(\ell_s,n)=g}}  G^{g}(h_s,\ell_s)
\end{align*}
with
\begin{align*}
G^{g}(h_s,\ell_s) := \sum_{\bsh \in \bbZ^{s-1}} \sum_{\substack{\bsell \in \bbZ^{s-1} \\ \gcd(\bsell \cdot \bsz,n) =g}}
	  \frac{\beta^{\lambda}_{ \supp(\bsh , h_s)} }{r'(\bsh , h_s)^{\lambda}}
  \frac{\beta^{\lambda}_{ \supp((\bsh, h_s) + (\bsell,\ell_s))} }{r'((\bsh,h_s) + (\bsell,\ell_s))^{\lambda}},
\end{align*}
where $r'(\bsh) = \prod_{j\in\supp(\bsh)} \lvert h_j \rvert ^\alpha$ as
defined in Lemma~\ref{lem:Sdecomp}, and the inequality holds because of
Lemma~\ref{lem:numg}, as we used that the congruence has at most $g$
solutions for $z_s \in \bbU_n$ if $g = \gcd(\ell_s, n) =
\gcd(\bsell\cdot\bsz, n)$ and no solution if this condition is not
satisfied. This step on splitting according to the divisors $g$ of $n$
is the key difference compared to the previous proof technique.

We separate the above expression into three parts: (i) $\ell_s = -h_s$;
(ii) $\ell_s \neq -h_s$ and $g \mid h_s$; (iii) $\ell_s \neq -h_s$ and $g
\nmid h_s$, to obtain
\begin{align*}
  &{\rm{Avg}}
  \leq
  \frac{1}{\varphi(n)}
  \Bigg(
\underbrace{ \sum_{\substack{g \in \{1:n\} \\ g \mid n}} g
	\sum_{\substack{h_s \in \bbZ \setminus \{0\} \\ \gcd(h_s,n)=g}}
	 \!\!\!\!\! G^{g}(h_s,-h_s) }_{=:\, F_1}
  \\
  &
  \qquad+ \underbrace{
    \sum_{\substack{g \in \{1:n\} \\ g \mid n}} g
  	\sum_{\substack{h_s \in \bbZ \\ g \mid h_s}}
	\;
	\sum_{\substack{\ell_s \in \bbZ \setminus \{0, -h_s\} \\ \gcd(\ell_s,n)=g}}
	 \!\!\!\!\! G^{g}(h_s,\ell_s) }_{=:\,F_2}
	+     \underbrace{
	\sum_{\substack{g \in \{1:n\} \\ g \mid n}} g
  	\sum_{\substack{h_s \in \bbZ \\ g \nmid h_s}}
	\;
	\sum_{\substack{\ell_s \in \bbZ \setminus \{0, -h_s\} \\ \gcd(\ell_s,n)=g}}
	 \!\!\!\!\! G^{g}(h_s,\ell_s) }_{=:\,F_3}
  \Bigg)
  .
\end{align*}

In the following we will swap the order of the multiple sums. For
$\ell_s \neq 0$, and with a relabeling of $\bsq = \bsh +\bsell
\in\bbZ^{s-1}$, it is easy to show that
\begin{align} \label{eq:sumGg}
\sum_{\substack{g \in \{1:n\} \\ g \mid n}} G^g (h_s, \ell_s)
& = \bigg(\sum_{\bsh \in \bbZ^{s-1}}
 \frac{\beta_{ \supp(\bsh,h_s)}^{\lambda} }{r'(\bsh ,h_s)^{\lambda}} \bigg)
 \bigg( \sum_{\bsq \in \bbZ^{s-1}}
 \frac{\beta_{ \supp( \bsq  , h_s+ \ell_s )} ^{\lambda}}{r'( \bsq  , h_s+ \ell_s )^{\lambda}}\bigg)%
 \notag \\
&=  \begin{cases}
 \displaystyle\frac{1}{\lvert \ell_s \rvert^{\alpha\lambda}} \scrP \scrQ  &  {\textnormal{if } h_s=0 \textnormal{ and } \ell_s \neq 0 }, \vspace{0.1cm} \\
 \displaystyle\frac{1}{\lvert h_s \rvert ^{\alpha\lambda}} \scrQ  \scrP  &  {\textnormal{if } h_s \neq 0 \textnormal{ and } \ell_s = - h_s }, \vspace{0.1cm}\\
 \displaystyle\frac{1}{\lvert h_s \rvert^{\alpha\lambda}}  \frac{1}{\lvert h_s + \ell_s \rvert ^{\alpha\lambda}} \scrQ^2   &       {\textnormal{if } h_s \neq 0 \textnormal{ and } \ell_s \neq - h_s },
 \end{cases}
\end{align}
with the abbreviations
\begin{align*}
\scrP := \sum_{ \setu \subseteq \{1:s-1\}  } \beta_{\setu}^{\lambda} \left( 2 \zeta(\alpha \lambda) \right)^{\lvert \setu \rvert}
\quad \textnormal{and} \quad
 \scrQ := \sum_{ \setu \subseteq \{1:s-1\}  } \beta_{\setu \cup \{s\} }^{\lambda} \left( 2 \zeta(\alpha \lambda) \right)^{\lvert \setu \rvert}.
\end{align*}

We now find an upper bound on $F_1$ as follows:
\begin{align*}
F_1 &:=  \sum_{\substack{g \in \{1:n\} \\ g \mid n}} g
	\sum_{\substack{h_s \in \bbZ \setminus \{0\} \\ \gcd(h_s,n)=g}}
	 G^{g}(h_s,-h_s)
	=   \sum_{\substack{g \in \{1:n\} \\ g \mid n}} g^{1-\alpha \lambda}
	\sum_{\substack{\widetilde{h}_s \in \bbZ \setminus \{0\} \\ \gcd(\widetilde{h}_s,n/g)=1}}
	 G^{g}(\widetilde{h}_s,-\widetilde{h}_s) \\
	 & \hphantom{:} \leq
	 	\sum_{\widetilde{h}_s \in \bbZ \setminus \{0\}}
		 \sum_{\substack{g \in \{1:n\} \\ g \mid n}} G^{g}(\widetilde{h}_s,-\widetilde{h}_s)
	=  \sum_{\widetilde{h}_s \in \bbZ \setminus \{0\}}
		\frac{1}{\lvert \widetilde{h}_s \rvert^{\alpha \lambda}} \scrQ \scrP
	= [2 \zeta(\alpha \lambda)] \scrQ \scrP,
\end{align*}
where the inequality is obtained by dropping the condition
$\gcd(\widetilde{h}_s, n/g)=1$ and using $\alpha \lambda >1$. The second
to last inequality follows from \eqref{eq:sumGg}.

Next we find an upper bound on $F_2$ as follows:
\begin{align*}
F_2 & :=  \sum_{\substack{g \in \{1:n\} \\ g \mid n}} g
  	\sum_{ \substack{h_s \in \bbZ \\ g \mid h_s}}  \sum_{\substack{\ell_s \in \bbZ \setminus \{0, -h_s\} \\ \gcd(\ell_s,n)=g}}
	 G^{g}(h_s,\ell_s) \\
	 &\hphantom{:} = \sum_{\substack{g \in \{1:n\} \\ g \mid n}} g
	 	 \sum_{\substack{\ell_s \in \bbZ \setminus \{0\} \\ \gcd(\ell_s,n)=g}}
		 G^{g}(0,\ell_s)
		+ \sum_{\substack{g \in \{1:n\} \\ g \mid n}} g
  	\sum_{\substack{h_s \in \bbZ \setminus \{0\} \\ g \mid h_s}}  \sum_{\substack{\ell_s \in \bbZ \setminus \{0, -h_s\} \\ \gcd(\ell_s,n)=g}}
	 G^{g}(h_s,\ell_s) \\
	 &\hphantom{:}=\sum_{\substack{g \in \{1:n\} \\ g \mid n}} g^{1- \alpha \lambda }
	 	 \sum_{\substack{ \widetilde{\ell}_s \in \bbZ \setminus \{0\} \\ \gcd( \widetilde{\ell}_s,n/g)=1}}
		 G^{g}(0,\widetilde{\ell}_s) \\
		 & \qquad \quad + \sum_{\substack{g \in \{1:n\} \\ g \mid n}} g^{1- 2\alpha \lambda }
		 \sum_{ \widetilde{h}_s \in \bbZ \setminus \{0\} }
		  \sum_{\substack{\widetilde{\ell}_s \in \bbZ \setminus \{0, - \widetilde{h}_s\} \\ \gcd( \widetilde{\ell}_s,n/g)=1}}
	 G^{g}( \widetilde{h}_s, \widetilde{\ell}_s) \\
	 & \hphantom{:}\leq 	  \sum_{\widetilde{\ell}_s \in \bbZ \setminus \{0\} }
	 \sum_{\substack{g \in \{1:n\} \\ g \mid n}}   G^{g}(0,\widetilde{\ell}_s)
	 +	 	 \sum_{ \widetilde{h}_s \in \bbZ \setminus \{0\} }
		  \sum_{\widetilde{\ell}_s \in \bbZ \setminus \{0, -\widetilde{h}_s\} }
		   \sum_{\substack{g \in \{1:n\} \\ g \mid n}}    G^{g}( \widetilde{h}_s, \widetilde{\ell}_s) \\
	&\hphantom{:} = \sum_{\widetilde{\ell}_s \in \bbZ \setminus \{0\} } \frac{1}{\lvert \widetilde{\ell}_s \rvert^{\alpha \lambda}} \scrP \scrQ
	  +	   \sum_{ \widetilde{h}_s \in \bbZ \setminus \{0\} } \frac{1}{\lvert\widetilde{h}_s \rvert^{\alpha \lambda}}
	     \sum_{\widetilde{\ell}_s \in \bbZ \setminus \{0, - \widetilde{h}_s \} }  \frac{1}{\lvert \widetilde{h}_s + \widetilde{\ell}_s \rvert ^{\alpha \lambda}} \scrQ^2 \\
	   &\hphantom{:}\leq    [2 \zeta(\alpha\lambda)] \scrP \scrQ +
	   		  [2 \zeta(\alpha\lambda)]^2  \scrQ^2,
\end{align*}
where we separated the cases $h_s=0$ and $h_s\ne 0$, and used again
\eqref{eq:sumGg}.

To find an upper bound on $F_3$, we write
\begin{align*}
 F_3
 &:=  \sum_{\substack{g \in \{1:n\} \\ g \mid n}} g
  	\sum_{\substack{h_s \in \bbZ \\ g \nmid h_s}}  \sum_{\substack{\ell_s \in \bbZ \setminus \{0, -h_s\} \\ \gcd(\ell_s,n)=g}}
	 G^{g}(h_s,\ell_s) \\
&\hphantom{:}\leq  \sum_{\substack{g \in \{1:n\} \\ g \mid n}} g
  	\sum_{\substack{h_s \in \bbZ \\ g \nmid h_s}}  \sum_{\substack{\ell_s \in \bbZ \setminus \{0, -h_s\} \\ g \mid \ell_s}}
	 G^{g}(h_s,\ell_s) \\
&\hphantom{:}=\sum_{\substack{g \in \{1:n\} \\ g \mid n}} g
      \sum_{\substack{h_s \in \bbZ \\ g \nmid h_s}}
       \frac{1}{\lvert h_s \rvert^{\alpha \lambda}}
        \sum_{\substack{\ell_s \in \bbZ \setminus \{0, -h_s\} \\ g \mid \ell_s}}
       \frac{1}{\lvert h_s + \ell_s \rvert^{\alpha \lambda}} \, \widetilde{G}^g,
\end{align*}
where
\begin{align*} 
 \widetilde{G}^g :=
 \sum_{\bsh \in \bbZ^{s-1}}
 \sum_{\substack{\bsell \in \bbZ^{s-1} \\ \gcd(\bsell \cdot \bsz, n) = g}}
 \frac{\beta^{\lambda}_{ \supp(\bsh) \cup \{s\} } }{r'(\bsh )^{\lambda}}
\frac{\beta^{\lambda}_{ \supp(\bsh + \bsell ) \cup \{s\} } }{r'(\bsh + \bsell )^{\lambda}}.
\end{align*}
Writing $h_s = pg + q$ with $q$ being congruent to the remainder modulo $g$ and $p \in
\bbZ$ and writing $\ell_s = kg$ with $k \in \bbZ \setminus \{0\}$, we
obtain
\begin{align*}
F_3
&\leq \sum_{\substack{g \in \{1:n\} \\ g \mid n}} g
	\sum_{\substack{q \in \bbZ \setminus \{0\} \\ q = \lceil -\frac{g-1}{2} \rceil } }^{\lceil \frac{g-1}{2} \rceil}
	\sum_{p \in \bbZ}  \frac{1}{\lvert pg+q \rvert^{\alpha \lambda}}
	\sum_{k \in \bbZ   \setminus \{0\} }   \frac{1}{\lvert (p+k)g+q \rvert^{\alpha \lambda}}   \widetilde{G}^g \\
&= \sum_{\substack{g \in \{1:n\} \\ g \mid n}} g
	\sum_{\substack{q \in \bbZ \setminus \{0\} \\ q = \lceil -\frac{g-1}{2} \rceil}}^{\lceil \frac{g-1}{2} \rceil}
	\sum_{p \in \bbZ}  \frac{1}{\lvert pg+q \rvert^{\alpha \lambda}}
	\Bigg(
	\sum_{k' \in \bbZ  }   \frac{1}{\lvert k' g+q \rvert^{\alpha \lambda}}  -  \frac{1}{\lvert pg+q\rvert^{\alpha \lambda}}
	\Bigg)  \widetilde{G}^g \\
&= \sum_{\substack{g \in \{1:n\} \\ g \mid n}} g
	\sum_{\substack{q \in \bbZ \setminus \{0\} \\ q = \lceil -\frac{g-1}{2} \rceil}}^{\lceil \frac{g-1}{2} \rceil}
	\Bigg[
	\Bigg( \sum_{p \in \bbZ} \frac{1}{\lvert pg+q\rvert^{\alpha \lambda}} \bigg)^2
   - \Bigg( \sum_{p \in \bbZ} \frac{1}{\lvert pg+q\rvert^{2\alpha \lambda}} \bigg)
	\Bigg]  \widetilde{G}^g .
\end{align*}

To proceed further we need to obtain an upper bound on the inner sums
over $p\in\bbZ$. For any fixed $g \in \{1\mathbin{:}n\}$ and $q$ such that
$\lceil -\frac{g-1}{2} \rceil \leq q \leq \lceil \frac{g-1}{2} \rceil$
we have $\frac{\lvert q \rvert}{g} \leq
\frac{1}{2}$. Moreover, for $p \in \bbZ$ and $p \neq 0$, by the triangle
inequality we have
\begin{align*}
\left\lvert 1 + \frac{q}{pg}\right\rvert \geq 1 - \frac{\lvert q \rvert}{\lvert p \rvert g} \geq 1-\frac{\lvert q \rvert}{g} \geq   \frac{1}{2}.
\end{align*}
Thus
\begin{align*} 
\sum_{p\in \bbZ} \frac{1}{\lvert pg +q \rvert^{\alpha \lambda}}
&= \sum_{p\in \bbZ \setminus \{0\} } \frac{1}{\lvert pg +q \rvert^{\alpha \lambda}} + \frac{1}{\lvert q \rvert^{\alpha \lambda}}
= \sum_{p\in \bbZ \setminus \{0\} } \frac{1}{\lvert pg \rvert^{\alpha \lambda} \lvert 1+\frac{q}{pg} \rvert ^{\alpha \lambda}} + \frac{1}{\lvert q \rvert^{\alpha \lambda}}  \notag\\
&\leq \frac{2^{\alpha \lambda}}{g^{\alpha \lambda}} \sum_{p\in \bbZ \setminus \{0\} }  \frac{1}{\lvert p \rvert^{\alpha \lambda}} + \frac{1}{\lvert q \rvert^{\alpha \lambda}}
 = \frac{2^{\alpha \lambda }}{g^{\alpha \lambda}} [ 2\zeta(\alpha \lambda) ] + \frac{1}{\lvert q \rvert^{\alpha \lambda}},
\end{align*}
and this leads to
\begin{align*}
F_3
& \leq  \sum_{\substack{g \in \{1:n\} \\ g \mid n}} g
	\sum_{ \substack{q \in \bbZ \setminus \{0\} \\ q = \lceil -\frac{g-1}{2} \rceil}}^{\lceil \frac{g-1}{2} \rceil}
	\Bigg[
	\Bigg( \frac{2^{\alpha \lambda }}{g^{\alpha \lambda}} [2\zeta(\alpha \lambda) ] + \frac{1}{ \lvert q \rvert^{\alpha \lambda}} \Bigg)^2
    - \frac{1}{\lvert q \rvert^{2 \alpha \lambda}}
	\Bigg]  \widetilde{G}^g \\
&=  \sum_{\substack{g \in \{1:n\} \\ g \mid n}} g
	\sum_{\substack{q \in \bbZ \setminus \{0\} \\ q = \lceil -\frac{g-1}{2} \rceil}}^{\lceil \frac{g-1}{2} \rceil}
	\Bigg[
	\frac{2^{2\alpha \lambda }}{g^{2\alpha \lambda}}  [2 \zeta(\alpha \lambda)]^2
	+ \frac{2^{\alpha \lambda +1}}{g^{\alpha \lambda}} [2\zeta(\alpha \lambda)] \frac{1}{\lvert q \rvert^{\alpha \lambda}}
	\Bigg]  \widetilde{G}^g \\
& \leq  \sum_{\substack{g \in \{1:n\} \\ g \mid n}} g
		\Bigg[
		\frac{2^{2\alpha \lambda }}{g^{\alpha \lambda}}  [2\zeta(\alpha \lambda)]^2 + \frac{2^{\alpha \lambda + 1}}{g^{\alpha \lambda}}  [2\zeta(\alpha \lambda)]^2
		\Bigg]  \widetilde{G}^g \\
& \leq \left( 2^{2\alpha \lambda } + 2^{\alpha \lambda + 1} \right)  [2\zeta(\alpha \lambda)]^2
		\sum_{\substack{g \in \{1:n\} \\ g \mid n}}  \widetilde{G}^g \\
& \leq  2^{2\alpha \lambda + 1}   [2\zeta(\alpha \lambda)]^2 \scrQ^2,
 \end{align*}
where with a relabeling of $\bsq = \bsh +\bsell\in\bbZ^{s-1}$, we
used
\begin{align*}
\sum_{\substack{g \in \{1:n\} \\ g \mid n}}  \widetilde{G}^g
&= \sum_{\bsh \in \bbZ^{s-1}}  \frac{\beta^{\lambda}_{ \supp(\bsh) \cup \{s\} } }{r'(\bsh)^{\lambda}}
	\sum_{\bsq \in \bbZ^{s-1}} \frac{\beta^{\lambda}_{ \supp(\bsq) \cup \{s\} } }{r'(\bsq)^{\lambda}} \\
&= \Bigg(
	 \sum_{\setu \subseteq \{1:s-1\}}  \sum_{\substack{\bsh \in \bbZ^{s-1} \\ \supp(\bsh)  = \setu}}
 \frac{\beta_{\setu \cup \{s\}}^{\lambda}}{\prod_{j \in \setu} \lvert h_j \rvert^{\alpha \lambda}}
	\Bigg)^2 \\
&= \Bigg(
	\sum_{\setu \subseteq \{1:s-1\}} \beta_{\setu \cup \{s\} }^{\lambda}
	 \prod_{j \in \setu}
	  \sum_{ h \in \bbZ \setminus \{0\}}
	 \frac{1}{ \lvert h \rvert^{\alpha \lambda}}
	\Bigg)^2
= \scrQ^2.
\end{align*}

Combining the upper bounds on $F_1, F_2$ and $F_3$, with $\kappa:=
2^{2\alpha \lambda + 1} + 1$,
we obtain an upper bound on Avg as follows:
\begin{align*}
{\rm{Avg}}
 &\leq \frac{1}{\varphi(n)} (F_1 + F_2 + F_3)\\
 &\leq \frac{1}{\varphi(n)}
 		\bigg( [2 \zeta(\alpha \lambda)] \scrQ \scrP
 		+    [2 \zeta(\alpha\lambda)] \scrP \scrQ
	   		+ [2 \zeta(\alpha\lambda)]^2  \scrQ^2
			+  2^{2\alpha \lambda + 1}   [2\zeta(\alpha \lambda)]^2 \scrQ^2 \bigg) \\
&= \frac{1}{\varphi(n)}
	\bigg(
	2 [2 \zeta(\alpha\lambda)] \scrP \scrQ  +  \big(2^{2\alpha \lambda + 1} +1 \big)  [2 \zeta(\alpha \lambda)]^2 \scrQ^2
	\bigg) \\
& \leq \frac{\kappa}{\varphi(n)}  \Big( 2 \zeta(\alpha\lambda)  \scrQ  \Big)
\Big( \scrP + 2 \zeta(\alpha\lambda)  \scrQ  \Big).
\end{align*}
Substituting the value of $\scrP$ and $\scrQ$ into the above formula, we
obtain
\begin{align*}
{\rm{Avg}}
& \leq \frac{\kappa}{\varphi(n)}
	\Bigg( 2 \zeta(\alpha\lambda) \sum_{ \setu \subseteq \{1:s-1\}  } \beta_{\setu \cup \{s\} }^{\lambda} [2 \zeta(\alpha \lambda)]^{\lvert \setu \rvert}
   \Bigg) \\
	&\qquad \times \Bigg( \sum_{ \setu \subseteq \{1:s-1\}  } \beta_{\setu }^{\lambda} [2 \zeta(\alpha \lambda) ]^{\lvert \setu \rvert}
	  + 2 \zeta(\alpha\lambda) \sum_{ \setu \subseteq \{1:s-1\}  } \beta_{\setu \cup \{s\} }^{\lambda}
        [2 \zeta(\alpha \lambda) ]^{\lvert \setu \rvert} \Bigg) \\
& = \frac{\kappa}{\varphi(n)}
		\Bigg( \sum_{ s \in \setu \subseteq \{1:s\}  } \beta_{\setu}^{\lambda} [2 \zeta(\alpha \lambda)]^{\lvert \setu \rvert}   \Bigg) \\
		&\qquad \times
  \Bigg( \sum_{ \setu \subseteq \{1:s-1\}  } \beta_{\setu }^{\lambda} [2 \zeta(\alpha \lambda)]^{\lvert \setu \rvert}
	  +\sum_{ s \in \setu \subseteq \{1:s\}  } \beta_{\setu}^{\lambda} [2 \zeta(\alpha \lambda) ]^{\lvert \setu \rvert}  \Bigg) \\
&=\frac{\kappa}{\varphi(n)}
		\Bigg( \sum_{ s \in \setu \subseteq \{1:s\}  } \beta_{\setu}^{\lambda} [2 \zeta(\alpha \lambda)]^{\lvert \setu \rvert}   \Bigg)
		\Bigg( \sum_{ \setu \subseteq \{1:s\}  } \beta_{\setu}^{\lambda} [2 \zeta(\alpha \lambda)]^{\lvert \setu \rvert}   \Bigg) .
\end{align*}
This completes the proof.
\end{proof}

With Lemma~\ref{lem:Sdecomp} and the new Lemma~\ref{lem:thetau}, we
can complete the proof of Theorem~\ref{thm:SUapp} by following the
argument in the proof of \cite[Theorem~3.5]{CKNS20}. (We have a slightly
improved constant $\kappa$ here, and we need to replace $n-1$ by
$\varphi(n)$.)

\section{Worst-case $L_{\infty}$ error with general $n$} \label{sec:WceLinf}

The following lemma gives an upper bound on the worst-case
$L_{\infty}$ error. It is a correction of \cite[Equation~(1.3)]{KWW20}
(cf. \cite[Lemma~1]{KWW09c}) which mistakenly claimed \eqref{wce:Linf1} to
be an equality.

\begin{lemma}\label{lem:Linfty-bound}
Given $n\ge 2$ \textnormal{(}prime or composite\textnormal{)}, $d\ge 1$,
$\alpha>1$, weights $\{\gamma_\setu\}_{\setu\subset\bbN}$, $M >0$, let
$A_n$ be the lattice approximation defined by~\eqref{eq:Af} with
index set \eqref{eq:AdM} and generating vector $\bsz \in \bbZ^d$. An upper
bound on the worst-case $L_{\infty}$ error is
\begin{align}
  e^{\rm wor\mbox{-}app}_{n,d,M}(\bsz;L_{\infty})
  &\le
  \Bigg( \sum_{\bsh \not\in \calA_d(M) } \frac{1}{r(\bsh)}  + 2 \sum_{\bsh \in \calA_d(M) } \sum_{\substack{\bsp \not\in \calA_d(M) \\ (\bsp -\bsh) \cdot \bsz \equiv_n 0}} \frac{1}{r(\bsp)} + {\rm{sum}}(T) \Bigg)^{1/2}
  \label{wce:Linf1}
  \\
  &\le
  \Bigg( \sum_{\bsh \not\in \calA_d(M) } \frac{1}{r(\bsh)} +  3\, {\rm{sum}}(T) \Bigg)^{1/2}
  , \label{ieq:wcelinf}
\end{align}
where 
\begin{align}\label{eq:sumt2}
  {\rm sum}(T)
  :=
  \sum_{\bsh \in \calA_d(M)} \sum_{\substack{\bsp \in \calA_d(M) \\ (\bsp-\bsh)\cdot \bsz  \equiv_n 0}}
				\sum_{\substack{\bsell \in \bbZ^d \setminus \{\bszero, \bsp-\bsh\} \\ \bsell \cdot \bsz \equiv_n 0}}  \frac{1}{r(\bsh + \bsell)}
  ,
\end{align}
with $T$ being a matrix defined in the proof in~\eqref{eq:T}.
\end{lemma}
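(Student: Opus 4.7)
The plan is a Cauchy--Schwarz argument in the weighted-$\ell^2$ sense (equivalently, bounding via the Riesz representer in $H_d$), followed by careful bookkeeping of $|a_\bsp(\bsx)|^2$ and its sum over $\bsp$. Using the aliasing identity $\widehat{f}^a(\bsh)=\sum_{\bsell\cdot\bsz\equiv_n 0}\widehat{f}(\bsh+\bsell)$ in~\eqref{eq:aperr}, I first rewrite the pointwise error as $(f-A_n(f))(\bsx)=\sum_{\bsp\in\bbZ^d}\widehat{f}(\bsp)\,a_\bsp(\bsx)$, with
\[
 a_\bsp(\bsx) := \mathbf{1}_{\bsp\not\in\calA_d(M)}\,\rme^{2\pi\ri\,\bsp\cdot\bsx}
 \,-\!\!\!\sum_{\substack{\bsh\in\calA_d(M),\,\bsh\ne\bsp\\(\bsp-\bsh)\cdot\bsz\equiv_n 0}}\!\!\!\rme^{2\pi\ri\,\bsh\cdot\bsx},
\]
where the exclusion $\bsh\ne\bsp$ is vacuous when $\bsp\not\in\calA_d(M)$ and reflects the cancellation of the $\bsh=\bsp$ cubature term when $\bsp\in\calA_d(M)$. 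Applying Cauchy--Schwarz with weights $r(\bsp)$ then gives
\[
 |(f-A_n(f))(\bsx)|^2 \,\le\, \|f\|_{d,\alpha,\bsgamma}^2 \sum_{\bsp\in\bbZ^d}\frac{|a_\bsp(\bsx)|^2}{r(\bsp)},
\]
so it suffices to bound the last sum uniformly in $\bsx$.

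Expanding $|a_\bsp(\bsx)|^2$ and summing over $\bsp$ produces three pieces: \emph{(i)}~a constant contribution $\sum_{\bsp\not\in\calA_d(M)}1/r(\bsp)$ coming from $|\rme^{2\pi\ri\,\bsp\cdot\bsx}|^2=1$; \emph{(ii)}~real linear cross-terms of the form $\rme^{\pm 2\pi\ri(\bsp-\bsh)\cdot\bsx}/r(\bsp)$ which, by $|\cos|\le 1$, are bounded in absolute value by twice the middle term of~\eqref{wce:Linf1}; and \emph{(iii)}~a nonnegative quadratic piece $Q(\bsx)$ arising from expanding the modulus squared of the sum over $\bsh$. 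The main step is to recast $Q(\bsx)$ in the form involving the matrix $T$ of~\eqref{eq:sumt2}. Expanding the square and swapping the order of summation, I perform the change of variable $\bsell:=\bsp-\bsh_1$, which decouples the pair of congruences $(\bsp-\bsh_j)\cdot\bsz\equiv_n 0$ ($j=1,2$) into $\bsell\cdot\bsz\equiv_n 0$ together with $(\bsh_1-\bsh_2)\cdot\bsz\equiv_n 0$, while the exclusions $\bsp\ne\bsh_1,\bsh_2$ become $\bsell\in\bbZ^d\setminus\{\bszero,\bsh_2-\bsh_1\}$. This yields
\[
 Q(\bsx)=\sum_{\substack{\bsh_1,\bsh_2\in\calA_d(M)\\(\bsh_1-\bsh_2)\cdot\bsz\equiv_n 0}}T_{\bsh_1,\bsh_2}\,\rme^{2\pi\ri(\bsh_1-\bsh_2)\cdot\bsx},
\]
with $T_{\bsh_1,\bsh_2}:=\sum_{\bsell\in\bbZ^d\setminus\{\bszero,\bsh_2-\bsh_1\},\,\bsell\cdot\bsz\equiv_n 0}1/r(\bsh_1+\bsell)$, matching~\eqref{eq:sumt2} after relabeling $(\bsh_1,\bsh_2)\to(\bsh,\bsp)$. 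Since $T_{\bsh_1,\bsh_2}\ge 0$ and $|\rme^{2\pi\ri\,\cdot\,\bsx}|=1$, I obtain $Q(\bsx)\le\mathrm{sum}(T)$, which together with \emph{(i)} and \emph{(ii)} proves~\eqref{wce:Linf1}.

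For the cleaner bound~\eqref{ieq:wcelinf}, I observe that the slice $\bsp=\bsh$ inside~\eqref{eq:sumt2} (admissible since $(\bsh-\bsh)\cdot\bsz=0$) contributes $\sum_{\bsh\in\calA_d(M)}\sum_{\bsell\ne\bszero,\,\bsell\cdot\bsz\equiv_n 0}1/r(\bsh+\bsell)$; after relabeling $\bsp:=\bsh+\bsell$, this already dominates the middle term of~\eqref{wce:Linf1}, which imposes the extra restriction $\bsh+\bsell\not\in\calA_d(M)$. Therefore that middle term is at most $2\,\mathrm{sum}(T)$, and combined with the quadratic contribution $\mathrm{sum}(T)$ we arrive at the total coefficient $3\,\mathrm{sum}(T)$ claimed in~\eqref{ieq:wcelinf}. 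The delicate step throughout is the bookkeeping of the excluded set $\{\bszero,\bsh_2-\bsh_1\}$ in~\emph{(iii)}: the two sub-cases $\bsp\in\calA_d(M)$ and $\bsp\not\in\calA_d(M)$ must be merged carefully, and it is precisely this subtlety that invalidated the equality claim in~\cite{KWW09c} and forces~\eqref{wce:Linf1} to be stated as an inequality.
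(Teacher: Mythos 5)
Your proposal is correct and is essentially the paper's own argument in different notation: your coefficient functions $a_\bsp(\bsx)$ are exactly $r(\bsp)$ times the Fourier coefficients of the representer $\sum_{\bsh}\tau_{\bsh}\,\rme^{-2\pi\ri\bsh\cdot\bsx}$ used in the paper, so your weighted Cauchy--Schwarz bound $\sum_{\bsp}|a_\bsp(\bsx)|^2/r(\bsp)$ coincides with the paper's $\sum_{\bsh,\bsp}\langle\tau_{\bsh},\tau_{\bsp}\rangle_{d,\alpha,\bsgamma}\rme^{2\pi\ri(\bsp-\bsh)\cdot\bsx}$, and your three pieces (i)--(iii) match the paper's case analysis of $\langle\tau_{\bsh},\tau_{\bsp}\rangle$ (including the $2\cdot\mathrm{trace}(T)\le 2\,\mathrm{sum}(T)$ step for \eqref{ieq:wcelinf}). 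No gaps.
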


\begin{proof}
Consider $f \in H_d$ and the lattice approximation~\eqref{eq:Af}, and
recall that the error is given by~\eqref{eq:aperr}. Using the reproducing
property of the kernel~\eqref{eq:ker}, and making use of the definition
of the corresponding inner product~\eqref{eq:ip}, we follow the argument in the
proof of \cite[Lemma~1]{KWW09c} to write
\begin{align} \label{eq:aperr2}
(f-A_n(f))(\bsx)
\,=\, \sum_{\bsh \in \bbZ^d} \left \langle f,\tau_{\bsh} \right \rangle_{d,\alpha,\bsgamma}   \rme^{2 \pi \ri \bsh \cdot {\bsx}}
\,=\,  \bigg \langle f, \sum_{\bsh \in \bbZ^d} \tau_{\bsh} \, \rme^{-2 \pi \ri \bsh \cdot {\bsx}}
\bigg\rangle_{d, \alpha,\bsgamma},
\end{align}
where
\begin{align*}
 \tau_{\bsh}(\bst) =
 \begin{cases}
  \displaystyle
  - \!\!\!\! \sum_{\substack{\bsq \in \bbZ^d \setminus \{\bsh\} \\ (\bsh - \bsq) \cdot \bsz \equiv_n 0}}
 \frac{\rme^{2 \pi \ri \bsq \cdot \bst}}{r(\bsq)} & \mbox{for } \bsh\in\calA_d(M),
 \vspace{2mm} \\
 \displaystyle \frac{\rme^{2 \pi \ri \bsh \cdot \bst}}{r(\bsh)} & \mbox{for } \bsh\not\in\calA_d(M).
 \end{cases}
\end{align*}
We can directly read off the Fourier coefficients of the functions
$\tau_\bsh$: for $\bsell\in\bbZ^d$,
\begin{align*}
  \widehat{\tau_\bsh}(\bsell)
  &=
  \begin{cases}
    \displaystyle
    -\frac{1}{r(\bsell)} & \textnormal{if }
    \bsh \in \calA_d(M), \;
    \bsell \ne \bsh \textnormal{ and } \bsh \cdot \bsz \equiv_n \bsell \cdot \bsz,
    \vspace{2mm} \\
    \displaystyle
    \frac{1}{r(\bsell)}  & \textnormal{if }
    \bsh \notin \calA_d(M) \textnormal{ and }
    \bsell = \bsh,
    \vspace{2mm} \\
    0 & \textnormal{otherwise} .
  \end{cases}
\end{align*}
Based on these we obtain
\begin{align} \label{eq:inp}
  \left \langle \tau_{\bsh},\tau_{\bsp} \right \rangle_{d, \alpha,\bsgamma}
  =
  \begin{cases}
    \displaystyle
    \sum_{\substack{\bsell \in \bbZ^d \setminus \{\bszero, \bsp-\bsh\} \\ \bsell \cdot \bsz \equiv_n 0}}
    \!\!\!\!
    \frac{1}{r(\bsh + \bsell)}
    &
    {\textnormal{if } \bsh, \bsp \in \calA_d(M) \textnormal{ and } \bsp \cdot \bsz  \equiv_n \bsh \cdot \bsz }
    ,
    \vspace{2mm}\\
    \displaystyle
    - \frac{1}{r(\bsp)}
    &\hspace{-1cm}
    {\textnormal{if } \bsh \in \calA_d(M),\, \bsp \not\in \calA_d(M) \textnormal{ and } \bsp \cdot \bsz  \equiv_n \bsh \cdot \bsz }
    ,
    \vspace{2mm}\\
    \displaystyle
    - \frac{1}{r(\bsh)}
    &\hspace{-1cm}
    {\textnormal{if } \bsh \not\in \calA_d(M),\, \bsp \in \calA_d(M) \textnormal{ and } \bsp \cdot \bsz  \equiv_n \bsh \cdot \bsz }
    ,
    \vspace{2mm}\\
    \displaystyle
    \frac{1}{r(\bsh)}
    &\hspace{-1cm}
    {\textnormal{if } \bsh = \bsp \not\in \calA_d(M)}
    ,
    \vspace{2mm}\\
    \displaystyle 0
    &\hspace{-1cm}
    {\textnormal{otherwise} }
    .
  \end{cases}
\end{align}
Here we point out that the second and third cases in \eqref{eq:inp} are
negative, which corrects both \cite[Lemma~1]{KWW09c} and \cite[Equation~(1.2)]{KWW20}.

Applying the Cauchy--Schwarz inequality to \eqref{eq:aperr2}, we obtain
\begin{align*}
  \lvert (f-A_n(f))(\bsx)  \rvert
  \leq
  \|f\|_{d, \alpha,\bsgamma}   \, \,
  \bigg\| \sum_{\bsh \in \bbZ^d} \tau_{\bsh} \,  \rme^{-2 \pi \ri \bsh \cdot {\bsx}} \bigg\|_{d,\alpha,\bsgamma}
  .
\end{align*}
Note that the second norm is with respect to the functions
$\tau_\bsh$ and the right-hand side is thus still a function of~$\bsx$.
Equality is attained when $f(\bst)$ and $ \sum_{\bsh \in \bbZ^d}
\tau_{\bsh}(\bst) \, \rme^{-2 \pi \ri \bsh \cdot {\bsx}} $ are linearly
dependent and hence the upper bound is attainable. An upper bound on the
worst-case $L_{\infty}$ error can hence be obtained as follows
\begin{align}
&e^{\rm wor\mbox{-}app}_{n,d,M}(\bsz;L_{\infty})
= \sup_{\bsx \in [0,1]^d} \bigg\|   \sum_{\bsh \in \bbZ^d} \tau_{\bsh} \,
  \rme^{-2 \pi \ri \bsh \cdot {\bsx}} \bigg\|_{d, \alpha,\bsgamma}
  \nonumber
  \\
&=
  \sup_{\bsx \in [0,1]^d}
  \bigg\langle
    \sum_{\bsh \in \bbZ^d} \tau_{\bsh} \,
      \rme^{-2 \pi \ri \bsh \cdot {\bsx}}
    ,
    \sum_{\bsp \in \bbZ^d} \tau_{\bsp} \,
      \rme^{-2 \pi \ri \bsp \cdot {\bsx}}
  \bigg\rangle_{d, \alpha,\bsgamma}^{1/2}
  \nonumber \\
&=  \sup_{\bsx \in [0,1]^d} \bigg\lvert \sum_{\bsh \in \bbZ^d} \sum_{\bsp \in \bbZ^d}
  \langle \tau_{\bsh}, \tau_{\bsp} \rangle_{d, \alpha,\bsgamma}\,
   \rme^{2 \pi \ri (\bsp-\bsh) \cdot {\bsx}} \bigg \rvert^{1/2} \label{eq:dif1} \\
&\le \sup_{\bsx \in [0,1]^d}  \bigg( \sum_{\bsh \in \bbZ^d} \sum_{\bsp \in \bbZ^d}
  \big\lvert \langle \tau_{\bsh}, \tau_{\bsp} \rangle_{d, \alpha,\bsgamma} \big\rvert\,
  \big\lvert\rme^{2 \pi \ri (\bsp-\bsh) \cdot {\bsx}} \big\rvert \bigg)^{1/2} \notag \\
& = \bigg( \sum_{\bsh \in \bbZ^d} \sum_{\bsp \in \bbZ^d}
  \lvert  \langle \tau_{\bsh}, \tau_{\bsp}  \rangle_{d, \alpha,\bsgamma}  \rvert \bigg)^{1/2}. \label{eq:dif2}
\end{align}
Here we point out that the supremum over $\bsx \in [0,1]^d$ in
\eqref{eq:dif1} cannot be attained by taking $\bsx = \bszero$ as
mistakenly claimed in the proof of \cite[Lemma~1]{KWW09c}, since $\langle
\tau_{\bsh},\tau_{\bsp}\rangle_{d,\alpha,\bsgamma} $ could be negative as shown in~\eqref{eq:inp}.

Define the matrix $T$ by
\begin{align}\label{eq:T}
  T
  &:=
  \begin{bmatrix}
    \; \lvert \langle \tau_{\bsh}, \tau_{\bsp} \rangle_{d, \alpha,\bsgamma} \rvert \;
  \end{bmatrix}_{\bsh, \bsp \in \calA_d(M)}
  .
\end{align}
Let ${\rm{trace}}(T)$ denote the sum of its diagonal elements, and
${\rm{sum}}(T)$ the sum of all its elements, as in~\eqref{eq:sumt2}.
Using \eqref{eq:inp} and \eqref{eq:dif2}, we then obtain the first claimed bound~\eqref{wce:Linf1}. To proof the second bound we follow
\cite{KWW20} to bound the middle term in~\eqref{wce:Linf1}
as
\begin{align*}
\sum_{\bsh \in \calA_d(M) } \sum_{\substack{\bsp \not\in \calA_d(M) \\ (\bsp -\bsh) \cdot \bsz \equiv_n 0}} \frac{1}{r(\bsp)}
& \, \leq \, \sum_{\bsh \in \calA_d(M) } \sum_{\substack{\bsp \in \bbZ^d \setminus \{\bsh\} \\ (\bsp -\bsh) \cdot \bsz \equiv_n 0}} \frac{1}{r(\bsp)} \\
& \,= \, \sum_{\bsh \in \calA_d(M) } \sum_{\substack{\bsell \in \bbZ^d \setminus \{\bszero\} \\ \bsell \cdot \bsz \equiv_n 0}} \frac{1}{r(\bsh+\bsell)}
={\rm{trace}}(T) \leq {\rm{sum}}(T)
.
\end{align*}
This completes the proof.
\end{proof}

There are two parts in the upper bound \eqref{ieq:wcelinf} and we will
evaluate them separately. We will need bounds on the cardinality of
$\calA_d(M)$ for general weights.

\begin{lemma} \label{lem:AdM}
For all $d\ge 1$, $\alpha>1$, weights
$\{\gamma_\setu\}_{\setu\subset\bbN}$, and $M >0$, the cardinality of the
index set \eqref{eq:AdM} satisfies the upper bound
\begin{align*}
  \lvert \calA_d(M) \rvert \,\le\, M^q \, C_{1,d,q,\alpha,\bsgamma},
  \qquad \mbox{for all }\quad q> \tfrac{1}{\alpha},
\end{align*}
where
$$
C_{1,d,q, \alpha, \bsgamma} \,:=\, \sum_{\setu\subseteq\{1:d\}}  \gamma_{\setu}^q \, [2\zeta(\alpha q)]^{\lvert \setu \rvert}.
$$
Moreover, if $M\ge 1$ then we have the lower bound
\begin{align} \label{ieq:cardlow}
  \lvert \calA_d(M) \rvert \ge (\gamma_{\{1\}} M)^{1/\alpha}.
\end{align}
\end{lemma}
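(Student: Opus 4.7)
The plan is to prove the upper bound by the standard ``indicator-to-power'' trick followed by a factorisation over supports, and to prove the lower bound by simply counting lattice points on a single coordinate axis.

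For the upper bound, the first observation is that $r(\bsh) > 0$ for every $\bsh \in \bbZ^d$: for $\bsh \ne \bszero$ this is obvious from~\eqref{eq:rda}, while for $\bsh = \bszero$ we have $r(\bszero) = 1/\gamma_\emptyset = 1$ by the normalisation convention $\gamma_\emptyset = 1$ introduced in Section~\ref{sec:Krob}. Hence for any $q > 0$ the elementary inequality
\[
  \mathbf{1}[r(\bsh) \le M] \,\le\, \left(\frac{M}{r(\bsh)}\right)^{q}
\]
holds for every $\bsh\in\bbZ^d$, so that $|\calA_d(M)| \le M^q \sum_{\bsh \in \bbZ^d} r(\bsh)^{-q}$. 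I would then stratify the sum by $\setu := \supp(\bsh)$: within each stratum the sum factorises as $\gamma_\setu^q \prod_{j\in\setu} \sum_{h\in\bbZ\setminus\{0\}} |h|^{-\alpha q}$, which equals $\gamma_\setu^q [2\zeta(\alpha q)]^{|\setu|}$. Summing over $\setu\subseteq\{1\mathbin{:}d\}$ recovers $C_{1,d,q,\alpha,\bsgamma}$. The range $q > 1/\alpha$ is forced by absolute convergence of $\zeta(\alpha q)$, and that is the only constraint needed.

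For the lower bound, I would restrict attention to the first coordinate axis and count vectors of the form $\bsh = (h,0,\ldots,0)$ with $h \in \bbZ$. For $h \ne 0$ one has $r(\bsh) = |h|^\alpha / \gamma_{\{1\}}$, so $\bsh \in \calA_d(M)$ precisely when $|h| \le (\gamma_{\{1\}} M)^{1/\alpha}$; including $\bszero$ (which lies in $\calA_d(M)$ because $r(\bszero) = 1 \le M$), this yields at least $1 + 2\lfloor (\gamma_{\{1\}} M)^{1/\alpha} \rfloor$ points. A short case check depending on whether $(\gamma_{\{1\}} M)^{1/\alpha}$ lies above or below $1$ then shows that this count is itself at least $(\gamma_{\{1\}} M)^{1/\alpha}$, finishing the bound.

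There is no real obstacle here; both estimates are standard and the only subtlety worth flagging is the use of $\gamma_\emptyset = 1$ so that the indicator-to-power trick applies uniformly, including at $\bsh = \bszero$. The lower bound is deliberately crude (only the first axis is used), which is sufficient for how~\eqref{ieq:cardlow} is subsequently applied.
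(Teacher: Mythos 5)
Your proof is correct and matches the paper's treatment: the lower bound argument (counting points on the first coordinate axis, then the two-case check on $\lfloor(\gamma_{\{1\}}M)^{1/\alpha}\rfloor$) is exactly the paper's, while for the upper bound the paper simply cites \cite[Lemma~3.1]{CKNS20}, whose proof is the same indicator-to-power and support-stratification argument you give. No gaps.
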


\begin{proof}
The upper bound is proved in \cite[Lemma~3.1]{CKNS20}. We proceed to prove
a simple lower bound. Suppose $M\ge 1$. Then since $\gamma_\emptyset = 1$
we have $\bszero \in \calA_d(M)$ and
\begin{align*}
  &\lvert \calA_d(M) \rvert
  =
  \sum_{\bsh \in \calA_d(M)} 1
  =
  1 + \sum_{\emptyset \ne \setu \subseteq \{1:d\}}
  \Big\lvert \{ \bsh_\setu \in (\bbZ \setminus \{0\})^{\lvert\setu\rvert} : \prod_{j \in \setu} \lvert h_j \rvert^\alpha \le \gamma_\setu  M \}
  \Big\rvert.
\end{align*}
Restricting the sum to a single subset $\setu = \{1\}$ gives $\lvert
\calA_d(M) \rvert \ge 1 + 2 \lfloor (\gamma_{\{1\}} M)^{1/\alpha}
\rfloor$. If $\gamma_{\{1\}} M \ge 1$, then $1 + 2 \lfloor (\gamma_{\{1\}}
M)^{1/\alpha} \rfloor \ge 1 + \lfloor (\gamma_{\{1\}} M)^{1/\alpha}
\rfloor \ge (\gamma_{\{1\}} M)^{1/\alpha}$. If $\gamma_{\{1\}} M < 1$, then
$1 + 2 \lfloor (\gamma_{\{1\}} M)^{1/\alpha} \rfloor = 1 > (\gamma_{\{1\}}
M)^{1/\alpha}$. Hence in all cases we have the lower bound
\eqref{ieq:cardlow}.
\end{proof}

The first part in \eqref{ieq:wcelinf} is the truncation error and we
will bound it in the following lemma, which extends
\cite[Lemma~6]{KSW08} from product weights to general weights. Note
that the stronger condition $M\ge 1$ is required.

\begin{lemma} \label{lem:srivXM}
For all $d\ge 1$, $\alpha>1$, weights
$\{\gamma_\setu\}_{\setu\subset\bbN}$, $M \ge 1$, and index set
\eqref{eq:AdM}, we have
\begin{align*} 
 \sum_{\bsh \not\in \calA_d(M) } \frac{1}{r(\bsh)} \leq C_{2,d,\tau, \alpha, \bsgamma}\, M^{-\frac{1-\tau}{\alpha \tau}}
 \qquad\mbox{for all } \tau \in (\tfrac{1}{\alpha},1),
\end{align*}
where
\begin{align*}
C_{2,d,\tau, \alpha, \bsgamma} := ( \gamma_{\{1\}} )^{\frac{\tau-1}{\alpha \tau}}
\frac{\tau}{1-\tau} \bigg( \sum_{\setu \subseteq \{1:d\}} \gamma_{\setu}^{\tau} \, [2 \zeta(\alpha \tau)]^{\lvert \setu \rvert} \bigg)^{1/\tau}.
\end{align*}
\end{lemma}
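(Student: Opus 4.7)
The plan is to enumerate $\bbZ^d$ by the size of $r(\bsh)$ and then to combine the two cardinality bounds from Lemma~\ref{lem:AdM}. First I would arrange the values $\{r(\bsh):\bsh\in\bbZ^d\}$ in non-decreasing order $M_1\le M_2\le\cdots$ (with ties broken arbitrarily), which is well-defined because $\calA_d(c)$ is finite for each $c>0$. By construction $\lvert\calA_d(M_k)\rvert\ge k$, so the upper bound in Lemma~\ref{lem:AdM} gives $k\le M_k^\tau\,C_{1,d,\tau,\alpha,\bsgamma}$, that is,
\[
 \frac{1}{M_k}\,\le\,C_{1,d,\tau,\alpha,\bsgamma}^{1/\tau}\,k^{-1/\tau}.
\]

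Next, writing $K:=\lvert\calA_d(M)\rvert$, the condition $\bsh\notin\calA_d(M)$ is equivalent to the rank of $\bsh$ exceeding $K$, hence
\[
  \sum_{\bsh\notin\calA_d(M)}\frac{1}{r(\bsh)}
  \,=\, \sum_{k>K}\frac{1}{M_k}
  \,\le\, C_{1,d,\tau,\alpha,\bsgamma}^{1/\tau}\sum_{k>K}k^{-1/\tau}.
\]
Since $\tau<1$ gives $1/\tau>1$, I would apply the standard integral comparison for the decreasing function $x\mapsto x^{-1/\tau}$ to obtain $\sum_{k>K}k^{-1/\tau}\le\int_K^\infty x^{-1/\tau}\,\rd x=\tfrac{\tau}{1-\tau}\,K^{-(1-\tau)/\tau}$.

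Finally I would invoke the lower bound $K\ge(\gamma_{\{1\}}M)^{1/\alpha}$ from Lemma~\ref{lem:AdM}, which is valid precisely under the hypothesis $M\ge 1$, so that $K^{-(1-\tau)/\tau}\le\gamma_{\{1\}}^{(\tau-1)/(\alpha\tau)}\,M^{-(1-\tau)/(\alpha\tau)}$. Assembling the three estimates reproduces exactly the constant $C_{2,d,\tau,\alpha,\bsgamma}$ and the claimed decay rate.

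The only genuine conceptual step is recognising that both directions of Lemma~\ref{lem:AdM} must be used together. A naive Markov-type estimate that merely invokes $1/r(\bsh)<1/M$ on the tail would give only the exponent $-(1-\tau)$ on $M$ with constant $C_{1,d,\tau,\alpha,\bsgamma}$. Instead, the upper cardinality bound is used to turn the rank~$k$ into a lower bound on each individual $M_k$ (this is what creates the $C_1^{1/\tau}$ factor and the $\ell^{1/\tau}$-tail behaviour), and the lower cardinality bound is used to translate the integer cutoff $K$ back into decay in $M$ (this is what produces $\gamma_{\{1\}}^{(\tau-1)/(\alpha\tau)}$ and the weaker $M$-exponent $(1-\tau)/(\alpha\tau)$). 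Beyond this observation, every step in the argument is routine.
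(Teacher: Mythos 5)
Your proof is correct and follows essentially the same route as the paper: order the terms of $\sum_{\bsh}1/r(\bsh)$ by rank, bound the $k$-th term by $C_{1,d,\tau,\alpha,\bsgamma}^{1/\tau}k^{-1/\tau}$, sum the tail by integral comparison, and convert $\lvert\calA_d(M)\rvert$ into $M$ via the lower bound \eqref{ieq:cardlow}. The only (cosmetic) difference is that you obtain the pointwise estimate $1/M_k\le C_{1,d,\tau,\alpha,\bsgamma}^{1/\tau}k^{-1/\tau}$ by applying the upper cardinality bound of Lemma~\ref{lem:AdM} at $M=M_k$, whereas the paper re-derives the identical inequality directly by averaging $r(\bsh^{(j)})^{-\tau}$ over $j\le k$.
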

\begin{proof}
Denote by $\bsh^{(i)}$ for $i=1,2,\ldots$ an ordering of $\bsh \in \bbZ^d$
such that $\frac{1}{r(\bsh^{(1)})} \ge \frac{1}{r(\bsh^{(2)})} \ge
\cdots$.
For all $i$ and $\tau > 1/\alpha$, we then have
\begin{align*}
  \frac{1}{r(\bsh^{(i)})^\tau}
  \le
  \frac1i \sum_{j=1}^i \frac{1}{r(\bsh^{(j)})^\tau}
  \le
  \frac1i \sum_{\bsh \in \bbZ^d}  \frac{1}{ r(\bsh)^{\tau} }
  =
  \frac1i \sum_{\setu \subseteq \{1:d\}} \gamma_\setu^\tau \, [2 \zeta(\alpha \tau)]^{\lvert \setu \rvert}
  .
\end{align*}
Therefore, with also $\tau < 1$, we have
\begin{align}
  \nonumber
  \sum_{\bsh \notin \calA_d(M)} \frac{1}{r(\bsh)}
  &=
  \sum_{i > \lvert \calA_d(M) \rvert} \frac{1}{r(\bsh^{(i)})}
  \le
  \sum_{i > \lvert \calA_d(M) \rvert}
  \bigg( \frac1i \sum_{\setu \subseteq \{1:d\}} \gamma_\setu^\tau \, [2 \zeta(\alpha \tau)]^{\lvert \setu \rvert} \bigg)^{1/\tau}
  \\
  \nonumber
  &\le
  \bigg( \sum_{\setu \subseteq \{1:d\}} \gamma_\setu^\tau \, [2 \zeta(\alpha \tau)]^{\lvert \setu \rvert} \bigg)^{1/\tau}
  \int_{\lvert \calA_d(M) \rvert}^\infty x^{-1/\tau} \rd{x}
  \\
  \label{ieq:wtcerr}
  &=
  \bigg( \sum_{\setu \subseteq \{1:d\}} \gamma_\setu^\tau \, [2 \zeta(\alpha \tau)]^{\lvert \setu \rvert} \bigg)^{1/\tau}
  \frac1{\lvert \calA_d(M) \rvert^{(1-\tau)/\tau}}
  \, \frac{\tau}{1-\tau}
  .
\end{align}
Combining \eqref{ieq:cardlow} and \eqref{ieq:wtcerr} completes the proof.
\end{proof}

The paper \cite{LK19b} studied worst-case $L_{\infty}$ approximation
by a combination of multiple rank-$1$ lattice rules. We remark that our
Lemmas~\ref{lem:AdM} and~\ref{lem:srivXM} can be used to extend the
results in \cite{LK19b} from product weights to general weights.

The second part in our worst-case $L_\infty$ error
bound~\eqref{ieq:wcelinf} is $3 \, {\rm{sum}}(T)$,
which bounds the cubature error. 
We look at two ways of bounding ${\rm{sum}}(T)$, both making use of
the quantity $S_{n,d,\alpha,\bsgamma}(\bsz)$ defined
in~\eqref{eq:Sd}.

\begin{lemma}\label{lem:sumT-bounds}
Given $n \geq 2$ \textnormal{(}prime or composite\textnormal{)}, $d\ge
1$, $\alpha>1$, weights $\{\gamma_\setu\}_{\setu\subset\bbN}$, and
generating vector $\bsz\in\bbZ^d$, the quantity ${\rm{sum}}(T)$ defined by
\eqref{eq:sumt2} satisfies
\begin{align}\label{ieq:Sd1inf}
  {\rm{sum}}(T)
  &\le
  M \, \left\lvert \calA_d(M) \right\rvert \, S_{n,d, \alpha, \bsgamma}(\bsz)
  .
\end{align}
Moreover, if $\alpha > 2$ then
\begin{align}\label{ieq:Sd2inf}
  {\rm{sum}}(T)
  &\le
  M \, \big[ S_{n,d,\frac{\alpha}{2},\sqrt{\bsgamma}}(\bsz) \big]^2
  ,
\end{align}
where 
$S_{n,d, \frac{\alpha}{2},\sqrt{\bsgamma} }(\bsz )$ is defined by the
expression \eqref{eq:Sd} with $\alpha$ replaced by $\alpha/2$ and each
weight $\gamma_\setu$ replaced by $\sqrt{\gamma_\setu}$.
\end{lemma}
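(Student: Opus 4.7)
The plan is to manipulate the triple sum in \eqref{eq:sumt2} so that it can be matched to either $S_{n,d,\alpha,\bsgamma}(\bsz)$ or $[S_{n,d,\alpha/2,\sqrt{\bsgamma}}(\bsz)]^2$. The key observation used throughout is that $\bsh \in \calA_d(M)$ is equivalent to $r(\bsh) \le M$; for the second case it is convenient to set $\widetilde{r}(\bsh) := r_{d,\alpha/2,\sqrt{\bsgamma}}(\bsh)$ so that $r(\bsh) = \widetilde{r}(\bsh)^2$ and the membership condition becomes $\widetilde{r}(\bsh) \le \sqrt{M}$.

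For the first bound \eqref{ieq:Sd1inf}, I would first drop the innermost restriction $\bsell \ne \bsp - \bsh$, which is legitimate since all summands are nonnegative; the remaining inner sum over $\bsell \ne \bszero$ with $\bsell \cdot \bsz \equiv_n 0$ no longer depends on $\bsp$. I would then bound the number of admissible $\bsp$ crudely by $|\calA_d(M)|$, multiply by $M/r(\bsh) \ge 1$ in the outer $\bsh$-sum (valid since $\bsh \in \calA_d(M)$), and extend the $\bsh$-sum to all of $\bbZ^d$. The resulting expression matches the definition of $S_{n,d,\alpha,\bsgamma}(\bsz)$ in \eqref{eq:Sd}, yielding \eqref{ieq:Sd1inf}.

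For the second bound \eqref{ieq:Sd2inf}, I would substitute $\bsm := \bsh + \bsell$ so that the inner constraint reads $\bsm \cdot \bsz \equiv_n \bsh \cdot \bsz$ with $\bsm \notin \{\bsh, \bsp\}$. The memberships $\bsh, \bsp \in \calA_d(M)$ give $1 \le \sqrt{M}/\widetilde{r}(\bsh)$ and $1 \le \sqrt{M}/\widetilde{r}(\bsp)$, which produces an overall factor of $M$ and allows relaxing both constraints to $\bsh, \bsp \in \bbZ^d$. The summand becomes $1/[\widetilde{r}(\bsh)\,\widetilde{r}(\bsp)\,\widetilde{r}(\bsm)^2]$, and the crucial step is to split $1/\widetilde{r}(\bsm)^2$ into two factors, pairing one with $\bsh$ and the other with $\bsp$: the triple $(\bsh, \bsp, \bsm)$ then corresponds to two admissible pairs $(\bsh, \bsm)$ and $(\bsp, \bsm)$ appearing in the expansion of $S_{n,d,\alpha/2,\sqrt{\bsgamma}}(\bsz)$ (same residue class mod $\bsz$, distinct entries). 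Since $[S_{n,d,\alpha/2,\sqrt{\bsgamma}}(\bsz)]^2$ is an unrestricted nonnegative double sum over such pairs-of-pairs, restricting to those whose second coordinates agree (both equal to $\bsm$) gives an upper bound on our triple sum, which is exactly $M\,[S_{n,d,\alpha/2,\sqrt{\bsgamma}}(\bsz)]^2$.

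The main obstacle is this last embedding step: one must verify that after relaxing the $\calA_d(M)$ constraints, the triples $(\bsh, \bsp, \bsm)$ with the remaining constraints (same residue class mod $\bsz$, $\bsm \ne \bsh$, $\bsm \ne \bsp$) correspond bijectively to a subset of pairs-of-pairs indexing $[S_{n,d,\alpha/2,\sqrt{\bsgamma}}(\bsz)]^2$, so that no combinatorial loss is incurred. The hypothesis $\alpha > 2$ enters only to ensure $\alpha/2 > 1$, which guarantees the finiteness of $S_{n,d,\alpha/2,\sqrt{\bsgamma}}(\bsz)$.
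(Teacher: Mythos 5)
Your proposal is correct and follows essentially the same route as the paper: the first bound drops the exclusion $\bsell \ne \bsp-\bsh$, counts the $\bsp$'s by $\lvert\calA_d(M)\rvert$, inserts $M/r(\bsh)\ge 1$ and extends the $\bsh$-sum, exactly as in the paper. For the second bound, your device of splitting $1/\widetilde{r}(\bsm)^2$ and embedding the triple sum as the diagonal sub-sum of $[S_{n,d,\alpha/2,\sqrt{\bsgamma}}(\bsz)]^2$ is just the paper's inequality $\sum_k a_k \le (\sum_k a_k^{1/2})^2$ (applied to the outer $\bsq$-sum after collapsing the pair count into a square) read in the opposite direction, so the two arguments coincide up to the order in which the $\calA_d(M)$ constraints are relaxed.
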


\begin{proof}
The first bound \eqref{ieq:Sd1inf} is obtained (as shown
in~\cite{KWW09c}) by bounding the sum over $\bsp$ in \eqref{eq:sumt2} by
the cardinality of $\calA_d(M)$ and then using $1\le\frac{M}{r(\bsh)}$ for
$\bsh \in \calA_d(M)$.

To derive the second bound \eqref{ieq:Sd2inf}, we write
\eqref{eq:sumt2} as (also shown in~\cite{KWW09c})
\begin{align*}
  {\rm sum}(T)
  &=
  \sum_{\bsh \in \calA_d(M)}
  \sum_{\substack{\bsp \in \calA_d(M) \\ \bsp \cdot \bsz \equiv_n \bsh \cdot \bsz}}
  \sum_{\substack{\bsq \in \bbZ^d \setminus \{\bsp, \bsh\} \\ \bsq \cdot \bsz \equiv_n \bsh \cdot \bsz}} \frac{1}{r(\bsq)}
  =
  \sum_{\bsq \in \bbZ^d} \frac{1}{r(\bsq)}
  \bigg( \sum_{\substack{\bsh \in \calA_d(M) \setminus \{\bsq\} \\ \bsh \cdot \bsz \equiv_n \bsq \cdot \bsz}} 1 \bigg)^2
  .
\end{align*}
Now we use that $\sum_k a_k \leq (\sum_k a_k^{1/2})^2$ for all $a_k
\geq 0$ and that $1\le\frac{M^{1/2}}{[r(\bsh)]^{1/2}}$ for $\bsh \in \calA_d(M)$
to obtain, for $\alpha > 2$,
\begin{align*}
  {\rm sum}(T)
  &\le
  \bigg(
  \sum_{\bsq \in \bbZ^d} \frac{1}{[r(\bsq)]^{1/2}}
  \sum_{\substack{\bsh \in \calA_d(M) \setminus \{\bsq\} \\ \bsh \cdot \bsz \equiv_n \bsq \cdot \bsz}} 1
  \bigg)^2
  \\
  &\le
  M
  \bigg(
  \sum_{\bsq \in \bbZ^d} \frac{1}{[r(\bsq)]^{1/2}}
  \sum_{\substack{\bsh \in \calA_d(M) \setminus \{\bsq\} \\ \bsh \cdot \bsz \equiv_n \bsq \cdot \bsz}} \frac{1}{[r(\bsh)]^{1/2}}
  \bigg)^2
  ,
\end{align*}
from which the second bound~\eqref{ieq:Sd2inf} follows by making use of
the definition~\eqref{eq:Sd} since $[r_{d, \alpha, \bsgamma}(\bsh)]^{1/2}
= r_{d,\frac{\alpha}{2}, \sqrt{\bsgamma}}(\bsh)$. The condition $\alpha >
2$ is needed so that the corresponding error bound \eqref{ieq:SUapp} for
$S_{n,d, \frac{\alpha}{2},\sqrt{\bsgamma} }(\bsz)$ is valid.
\end{proof}

We remark that~\eqref{ieq:Sd1inf} was analyzed in \cite{KWW09c} so
$S_{n,d,\alpha,\bsgamma}(\bsz)$ was proposed as one of the CBC search
criteria for $L_\infty$ approximation for product weights and a prime
number of points. Another quantity $X_d(\bsz)$ which depends on the index
set $\calA_d(M)$ was introduced in \cite{KWW09c} to obtain a better
convergence rate when $\alpha> 2$. We show that our second
bound~\eqref{ieq:Sd2inf} leads to the same better rate when $\alpha> 2$,
and has the advantage that it does not involve any index set.

\begin{theorem}\label{thm:errLinf}
Given $n \geq 2$ \textnormal{(}prime or composite\textnormal{)}, $d \geq
1$, $\alpha>1$, and weights
$\bsgamma=\{\gamma_\setu\}_{\setu\subset\bbN}$, consider the lattice
approximation~\eqref{eq:Af} with index set~\eqref{eq:AdM}.
\begin{enumerate}
\item 
    The generating vector $\bsz$ obtained from
    Algorithm~\textup{\ref{alg:CBCapp}} based on the search criterion
    $S_{n,d, \alpha,\bsgamma }(\bsz)$ in~\eqref{eq:Sd} satisfies
\begin{align*}
  e^{\rm wor\mbox{-}app}_{n,d,M}(\bsz;L_{\infty})
  \,=\, \calO\big( \left[S_{n,d,\alpha,\bsgamma}(\bsz)\right]^{\tau \, r_1} \big)
  \,=\, \calO\big( \varphi(n)^{- r_1} \big)
  \quad\mbox{for all } \tau \in (\tfrac{1}{\alpha},1),
\end{align*}
where $M$ is given by \eqref{eq:M1} $($with $n$ sufficiently large
so that $M\ge 1$$)$ and
\begin{align*}
 r_1 := \frac{1-\tau}{2 \tau (1 - \tau + \alpha\tau + \alpha\tau^2)},
 \quad\mbox{which can be arbitrarily close to}\quad
  \frac{\alpha-1}{4}.
\end{align*}

\item 
    For $\alpha>2$, the generating vector $\bsz$ obtained from
    Algorithm~\textup{\ref{alg:CBCapp}} based on the search criterion
    $S_{n,d, \frac{\alpha}{2},\sqrt{\bsgamma} }(\bsz)$, i.e.
    \eqref{eq:Sd} with $\alpha$ replaced by $\alpha/2$ and weights
    $\gamma_\setu$ replaced by $\sqrt{\gamma_\setu}$, satisfies
\begin{align*} 
  e^{\rm wor\mbox{-}app}_{n,d,M}(\bsz;L_{\infty})
  = \calO\big( \left[S_{n,d,\alpha/2,\sqrt{\bsgamma}}(\bsz)\right]^{2\tau\, r_2} \big)
  = \calO\big( \varphi(n)^{- r_2} \big)
  \, \, \mbox{for all } \tau \in
  (\tfrac{1}{\alpha},\tfrac{1}{2}),
  \end{align*}
where $M$ is given by \eqref{eq:M2} $($with $n$ sufficiently large
so that $M\ge 1$$)$ and
\begin{align*}
 r_2 :=\frac{1-\tau}{2 \tau (1 - \tau + \alpha\tau ) },
 \quad\mbox{which can be arbitrarily close to}\quad
  \frac{\alpha-1}{2}  \frac{1}{2 - \frac{1}{\alpha}}.
\end{align*}
\end{enumerate}
The implied constants are independent of $d$ if \eqref{ieq:idpcon}
holds.
\end{theorem}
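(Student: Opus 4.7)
The strategy is to combine the upper bound \eqref{ieq:wcelinf} from Lemma~\ref{lem:Linfty-bound} with the tail estimate of Lemma~\ref{lem:srivXM} for the truncation contribution and the two ${\rm sum}(T)$ bounds of Lemma~\ref{lem:sumT-bounds} for the cubature contribution, then balance $M$ against the relevant search criterion, and finally invoke Theorem~\ref{thm:SUapp} to convert that criterion into an explicit $\varphi(n)$-rate. Both parts of the theorem have the same overall flow; only the bound used on ${\rm sum}(T)$ (and therefore the exponent of $M$ in the cubature term) differs.

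For Part 1, fix $\tau \in (1/\alpha,1)$. Lemma~\ref{lem:srivXM} gives a truncation tail of order $M^{-(1-\tau)/(\alpha\tau)}$. Combining \eqref{ieq:Sd1inf} with Lemma~\ref{lem:AdM} applied with the exponent $q=\tau$ yields a cubature bound of order $M^{1+\tau}\,S_{n,d,\alpha,\bsgamma}(\bsz)$. Plugging both estimates into \eqref{ieq:wcelinf} and choosing $M$ so that the two contributions balance gives an overall worst-case error of order $[S_{n,d,\alpha,\bsgamma}(\bsz)]^{\tau r_1}$ with $r_1=(1-\tau)/(2\tau(1-\tau+\alpha\tau+\alpha\tau^2))$; this is the algebra underlying the choice of $M$ in \eqref{eq:M1}. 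Finally, Theorem~\ref{thm:SUapp} with the parameter $\lambda=\tau\in(1/\alpha,1)$ upgrades the right-hand side to $\calO(\varphi(n)^{-r_1})$. Letting $\tau\downarrow 1/\alpha$ drives $r_1\to(\alpha-1)/4$.

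For Part 2, with $\alpha>2$ we fix $\tau\in(1/\alpha,1/2)$ so that $\alpha/2>1$ and the alternative criterion $S_{n,d,\alpha/2,\sqrt{\bsgamma}}(\bsz)$ is admissible in Theorem~\ref{thm:SUapp}. The truncation bound is unchanged, but we use \eqref{ieq:Sd2inf} to obtain a cubature bound of order $M\,[S_{n,d,\alpha/2,\sqrt{\bsgamma}}(\bsz)]^2$; the key gain is that the cardinality factor $|\calA_d(M)|$ disappears, leaving only the linear power of $M$. Balancing against $M^{-(1-\tau)/(\alpha\tau)}$ yields the error rate $[S_{n,d,\alpha/2,\sqrt{\bsgamma}}(\bsz)]^{2\tau r_2}$ with $r_2=(1-\tau)/(2\tau(1-\tau+\alpha\tau))$, giving \eqref{eq:M2}. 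Then Theorem~\ref{thm:SUapp}, applied with smoothness $\alpha/2$, weights $\sqrt{\bsgamma}$, and the choice $\lambda=2\tau\in(2/\alpha,1)$, produces the claimed $\calO(\varphi(n)^{-r_2})$ bound. The limit $\tau\downarrow 1/\alpha$ yields $r_2\to (\alpha-1)/(2(2-1/\alpha))$.

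The only real work is bookkeeping: I must ensure that $\tau$, $\lambda$, and the exponent $q=\tau$ used in Lemma~\ref{lem:AdM} lie simultaneously in their admissible ranges, and that $n$ is large enough so that the balanced $M$ satisfies $M\ge 1$ as required by Lemmas~\ref{lem:AdM} and \ref{lem:srivXM}. The dimension-independence under \eqref{ieq:idpcon} is inherited directly from the constants $C_{1,d,q,\alpha,\bsgamma}$, $C_{2,d,\tau,\alpha,\bsgamma}$ and the prefactor in Theorem~\ref{thm:SUapp}, exactly as in the proof of Theorem~\ref{thm:errL2}. The main conceptual obstacle, namely the inductive treatment of non-product weights for composite $n$, is already resolved by Theorem~\ref{thm:SUapp}; here it only needs to be applied once, in two different parameter regimes.
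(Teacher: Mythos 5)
Your proposal is correct and follows essentially the same route as the paper's own proof: starting from \eqref{ieq:wcelinf}, combining Lemmas~\ref{lem:AdM}, \ref{lem:srivXM} and \ref{lem:sumT-bounds} (with $q=\tau$ in the first part), equating the truncation and cubature terms to define $M$ as in \eqref{eq:M1} and \eqref{eq:M2}, and then invoking Theorem~\ref{thm:SUapp} with $\lambda=\tau$ in Part~1 and with smoothness $\alpha/2$, weights $\sqrt{\bsgamma}$ and $\lambda=2\tau$ in Part~2. The parameter bookkeeping, the $M\ge 1$ proviso, and the dimension-independence via \eqref{ieq:idpcon} are all handled as in the paper.
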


\begin{proof}
For both claims we start from the error bound~\eqref{ieq:wcelinf} from
Lemma~\ref{lem:Linfty-bound} and combine this with the bounds from
Lemmas~\ref{lem:AdM}, \ref{lem:srivXM}, and \ref{lem:sumT-bounds}.

For the first claim, we use the first bound~\eqref{ieq:Sd1inf} from
Lemma~\ref{lem:sumT-bounds} to obtain
\begin{align} \label{eq:balance1}
 e^{{\rm wor\mbox{-}app}}_{n,d,M}(\bsz;L_{\infty})
  &\,\le\, \bigg( C_{2,d,\tau, \alpha, \bsgamma} \, M^{-\frac{1-\tau}{\alpha \tau}} \,
  	+ \,3 \, M^{q+1} \, C_{1,d,q, \alpha, \bsgamma} \, S_{n,d, \alpha, \bsgamma}(\bsz)
 \bigg)^{1/2}
\end{align}
for all $q \in (\tfrac{1}{\alpha},\infty)$ and $\tau \in
(\tfrac{1}{\alpha},1)$, where $C_{2,d,\tau, \alpha, \bsgamma}$ is as defined in
Lemma~\ref{lem:srivXM} and $C_{1,d,q, \alpha, \bsgamma} $ is as defined in
Lemma~\ref{lem:AdM}. We take $q=\tau$ and choose $M$ to equate the two
terms inside the brackets in \eqref{eq:balance1} to arrive at
\begin{align} \label{eq:M1}
M =
 \bigg(\frac{C_{2,d,\tau, \alpha, \bsgamma}}{3 \, C_{1,d,\tau, \alpha, \bsgamma}} [S_{n,d,\alpha,\bsgamma} (\bsz)]^{-1}
 \bigg)^{\frac{\alpha \tau}{\alpha\tau^2 + \alpha \tau - \tau +1}}.
\end{align}
Provided that $n$ is sufficiently large, \eqref{eq:M1} will satisfy
$M\ge 1$ and this leads to
\begin{align*}
  e^{\rm wor\mbox{-}app}_{n,d,M}(\bsz;L_{\infty})
    \, \leq \,
  \sqrt{2} \,
    \bigg( 3 \,  C_{2,d,\tau, \alpha, \bsgamma}^{\frac{\alpha \tau(1+\tau)}{1-\tau}} \, C_{1,d,\tau, \alpha, \bsgamma} \, S_{n,d, \alpha, \bsgamma}(\bsz )
    \bigg)^{\frac{1- \tau}{2(1-\tau+\alpha\tau + \alpha \tau^2)}} .
\end{align*}
Using Theorem~\ref{thm:SUapp} and taking $\lambda = \tau$, we conclude
that
\begin{align*} 
  e^{\rm wor\mbox{-}app}_{n,d,M}(\bsz;L_{\infty})
   \, \leq \,
   C_{3,d,\tau, \alpha, \bsgamma}\, \varphi(n)^{-\frac{1-\tau}{2 \tau (1 - \tau + \alpha\tau + \alpha\tau^2) }},
\end{align*}
where
\begin{align*} 
C_{3,d,\tau, \alpha, \bsgamma} &:= \sqrt{2} \,
			\bigg[ 3\, \gamma_{\{1\}}^{-\tau-1} \left( 2^{2\alpha\tau+1} +1 \right)^{1/\tau}
  \Big( \frac{\tau}{1-\tau} \Big)^{\frac{\alpha \tau(1+\tau)}{ 1 - \tau }}
			\bigg]^{\frac{1-\tau}{2 (1 - \tau + \alpha\tau +\alpha\tau^2) }}  \notag \\
	& \, \qquad \quad \times \bigg(
  \sum_{\setu\subseteq\{1:d\}} \max(\lvert \setu \rvert,1)\,\gamma_{\setu}^\tau\, [2\zeta(\alpha\tau)]^{\lvert \setu \rvert}
  	\bigg)^{\frac{1}{2\tau} \left(1 + \frac{ 1 - \tau^2}{1 + (\alpha-1)\tau + \alpha \tau^2} \right)},
\end{align*}
which can be bounded independently of $d$ if \eqref{ieq:idpcon}
holds.

For the second claim, we assume that $\alpha>2$. Then from the second
bound~\eqref{ieq:Sd2inf} from Lemma~\ref{lem:sumT-bounds} we obtain
\begin{align*} 
 e^{\rm wor\mbox{-}app}_{n,d,M}(\bsz;L_{\infty})
  \, \leq \, \bigg( C_{2,d,\tau, \alpha, \bsgamma}\, M^{-\frac{1-\tau}{\alpha \tau}}
  + 3 \, M \, \big[S_{n,d, \frac{\alpha}{2}, \sqrt{\bsgamma}}(\bsz)\big]^2 \bigg)^{1/2}
\end{align*}
for all $\tau \in (\tfrac{1}{\alpha},1)$. 
We again equate the two terms inside the brackets to obtain
\begin{align} \label{eq:M2}
 M = \left( \tfrac{1}{3} \, C_{2,d,\tau, \alpha, \bsgamma} \big[S_{n,d, \frac{\alpha}{2}, \sqrt{\bsgamma}}(\bsz)\big]^{-2} \right)^{\frac{\alpha \tau}{\alpha \tau - \tau +1}}.
\end{align}
Provided that $n$ is sufficiently large, \eqref{eq:M2} will satisfy
$M\ge 1$ and this now leads to
\begin{align*}
 e^{\rm wor\mbox{-}app}_{n,d,M}(\bsz;L_{\infty})
 \, \leq \, \sqrt{2} \, \bigg( 3 \, C_{2,d,\tau, \alpha, \bsgamma}^{\frac{\alpha \tau}{ 1 -\tau}} \,
				\big[S_{n,d, \frac{\alpha}{2}, \sqrt{\bsgamma}}(\bsz)\big]^2
 \bigg)^{\frac{1-\tau}{2 ( 1 -\tau + \alpha \tau )}}.
\end{align*}
We can now apply Theorem~\ref{thm:SUapp}, but with $\alpha$ replaced
by $\tilde{\alpha} := \alpha/2$ and $\bsgamma$ replaced by
$\tilde{\bsgamma} := \sqrt{\bsgamma}$, to obtain for all $\lambda \in
(1/\tilde\alpha,1) \,
= (2/\alpha, 1)$,
\begin{align*}
S_{n,d, \tilde{\alpha}, \tilde{\bsgamma}}(\bsz)
& \, \leq \frac{ \big(2^{\alpha  \lambda + 1} +1\big)^{1/\lambda}}{\varphi(n)^{1/\lambda}}
		\bigg( \sum_{ \setu \subseteq \{1:d\}  } \max(\lvert \setu \rvert,1) \, \gamma_{\setu}^{\frac{\lambda}{2}}
   \Big[ 2 \zeta(\tfrac{\alpha \lambda}{2}) \Big]^{\lvert \setu \rvert}   \bigg)
			^{2/\lambda}.
\end{align*}
Taking $\lambda = 2 \tau$ and restricting $\tau < 1/2$, we can thus obtain
\begin{align*}
  e^{\rm wor\mbox{-}app}_{n,d,M}(\bsz;L_{\infty})
   \, \leq \, C_{4,d,\tau, \alpha, \bsgamma} \,
   \varphi(n)^{-\frac{1-\tau}{2 \tau (1 - \tau + \alpha\tau ) }} ,
\end{align*}
where
\begin{align*} 
C_{4,d,\tau, \alpha, \bsgamma}&:= \sqrt{2} \, \Big( 3 \gamma_{\{1\}}^{-1}
   \big( 2^{2\alpha\tau+1} +1 \big)^{1/\tau} \Big)^{\frac{1-\tau}{2 (1 - \tau + \alpha\tau ) }}
			\Big( \frac{\tau}{1-\tau} \Big)^{\frac{\alpha \tau}{2 (1 - \tau + \alpha\tau ) }} \notag \\
			& \quad \qquad \times  \bigg(
  \sum_{\setu\subseteq\{1:d\}} \max(\lvert \setu \rvert,1)\,\gamma_{\setu}^\tau\, [2\zeta(\alpha\tau)]^{\lvert \setu \rvert} \bigg)^{\frac{1}{2\tau} \left(1 + \frac{1-\tau}{\alpha \tau + 1 -\tau } \right)},
\end{align*}
which can be bounded independently of $d$ if \eqref{ieq:idpcon} holds.
\end{proof}

\section{Embedded lattice rules} \label{sec:Emb_lat}

In this section, we apply techniques from \cite{CKN06} to construct good
generating vectors of embedded lattice rules for a range of number of
points. Recall from Algorithm~\textup{\ref{alg:CBCapp}} that the
search criterion for a fixed $n$ is
$T_{n,d,s,\alpha,\bsgamma}(z_1,\ldots,z_s)$ which contributes to the
dimension-wise decomposition of $S_{n,d,\alpha,\bsgamma}(\bsz)$ as in
Lemma~\textup{\ref{lem:Sdecomp}}. In Algorithm~\textup{\ref{alg:CBCemb}}
below, we will construct embedded lattice rules by a mini-max strategy
based on the ratios of $T_{n,d,s,\alpha,\bsgamma}(z_1,\ldots,z_s)$ against
the ``best'' choices of generating vectors for a range of values of $n$.

\begin{algo} \label{alg:CBCemb}
Given $m_2 > m_1 \ge 1$, prime $p$, $d \geq 1$, $\alpha>1$, and
weights $\bsgamma = \{\gamma_\setu\}_{\setu\subset\bbN}$, for each $m
= m_1,\ldots,m_2,$ we obtain the generating vector $\bsz^{(m)} =
(z_1^{(m)},\ldots,z_d^{(m)})$ using Algorithm~\textup{\ref{alg:CBCapp}}
with $n = p^m$ and store the corresponding values of $T_{p^m,d,s,\alpha,\bsgamma}(z_1^{(m)},\ldots,z_s^{(m)})$ for $s = 1,\ldots,d$.

Then we construct the generating vector $\bsz^{\rm emb} =
(z_1^{\rm emb}, \ldots, z_d^{\rm emb})$ as follows: for each $s = 1,
\ldots,d$, with $z_1^{\rm emb}, \ldots, z_{s-1}^{\rm emb}$ fixed, choose
$z_s$ from
$$\bbU_{p^{m_2}} = \left \{ z \in \bbZ \,: \, 1 \leq z\leq p^{m_2}-1 \, \, \textnormal{and} \, \, \gcd(z, p)=1  \right\}, $$
to minimize 
\begin{align} \label{eq:ratioXs}
  X_{p,m_1,m_2,d,s,\alpha,\bsgamma}(z_1^{\rm emb}, \ldots, z_{s-1}^{\rm emb}, z_s)
 \,:=\, \max_{m_1 \le m \le m_2}
  \frac{T_{p^m,d,s,\alpha,\bsgamma}(z_1^{\rm emb}, \ldots, z_{s-1}^{\rm emb}, z_s)}
       {T_{p^m,d,s,\alpha,\bsgamma}(z_1^{(m)},\ldots,z_s^{(m)})}.
\end{align}
\end{algo}

From the definition \eqref{eq:ratioXs} it follows that the generating
vector $\bsz^{\rm emb}$ obtained by Algorithm~\ref{alg:CBCemb} satisfies,
for each $m$ between $m_1$ and $m_2$,
\begin{align} \label{eq:X_S}
 &S_{p^m, d,\alpha,\bsgamma}(\bsz^{\rm emb})
 = \sum_{s=1}^d T_{p^m,d,s,\alpha,\bsgamma}(z_1^{\rm emb}, \ldots, z_{s}^{\rm emb}) \notag \\
 &\le \sum_{s=1}^d X_{p,m_1,m_2,d,s,\alpha,\bsgamma}(z_1^{\rm emb}, \ldots, z_s^{\rm emb}) \,
  T_{p^m,d,s,\alpha,\bsgamma}(z_1^{(m)},\ldots,z_s^{(m)}) \notag \\
 &\le \bigg( \max_{s \in \{1:d\}}
 X_{p,m_1,m_2,d,s,\alpha,\bsgamma}(z_1^{\rm emb}, \ldots, z_s^{\rm emb}) \bigg)
    S_{p^m,d,\alpha,\bsgamma}(\bsz^{(m)}).
\end{align}%
The quantity $\max_{s \in \{1:d\}}
 X_{p,m_1,m_2,d,s,\alpha,\bsgamma}(z_1^{\rm emb}, \ldots, z_s^{\rm emb})$
 therefore indicates how much worse
$\bsz^{\rm emb}$ obtained from Algorithm~\ref{alg:CBCemb} is compared with
$\bsz^{(m)} $ obtained from Algorithm~\ref{alg:CBCapp} for each number of
points $p^m$, $m_1 \le m \le m_2$, due to the reason that the upper bounds
on the worst-case error measured under both $L_2$ and $L_{\infty}$ are in
terms of $S_{n,d, \alpha, \bsgamma}(\bsz )$ or its variants
$S_{n,d,\alpha/2,\sqrt{\bsgamma}}(\bsz)$, see Theorem~\ref{thm:errL2}
and~\ref{thm:errLinf}.

Theorem~\ref{thm:upbX} shows a theoretical upper bound on the
ratio~\eqref{eq:ratioXs}.

\begin{theorem} \label{thm:upbX}
Given $m_2>m_1 \ge 1$, prime $p$, $d \geq 1$, $\alpha>1$, and weights
$\bsgamma = \{\gamma_\setu\}_{\setu\subset\bbN}$, let $\bsz^{\rm emb} =
(z_1^{\rm emb}, \ldots, z_{d}^{\rm emb})$ be obtained from the CBC
construction following Algorithm~\textup{\ref{alg:CBCemb}}. Then the
maximum in \eqref{eq:X_S} is of order arbitrarily close to
\begin{align*}
  (m_2 - m_1 + 1)^\alpha.
\end{align*}%
In other words, we are penalised by a factor of only $(\log N)^{\alpha}$ with $N = p^{m_2}$.
\end{theorem}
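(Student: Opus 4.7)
The plan is to adapt to the minimax setting of Algorithm~\ref{alg:CBCemb} the averaging argument behind Theorem~\ref{thm:SUapp}, following the strategy of \cite{CKN06}. Working step by step in the embedded CBC loop at each $s\in\{1,\ldots,d\}$, my aim is to show a per-step bound on $X_{p,m_1,m_2,d,s,\alpha,\bsgamma}(z_1^{\rm emb},\ldots,z_s^{\rm emb})$ of order $(m_2-m_1+1)^{1/\lambda}$ (up to a $\lambda$-dependent constant) for every $\lambda\in(1/\alpha,1]$, and then to let $\lambda$ tend to $1/\alpha$ from above so that $1/\lambda\to\alpha$.

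Fix $s$ and $\lambda\in(1/\alpha,1]$, and write $\bsz^{\rm emb}_{s-1}=(z_1^{\rm emb},\ldots,z_{s-1}^{\rm emb})$. The Jensen-type identity $(\max_m a_m)^\lambda=\max_m a_m^\lambda\le\sum_m a_m^\lambda$, valid for $a_m\ge 0$, converts the max in \eqref{eq:ratioXs} into a sum and gives
\[
  X_{p,m_1,m_2,d,s,\alpha,\bsgamma}(\bsz^{\rm emb}_{s-1},z_s)^\lambda
  \,\le\,
  \sum_{m=m_1}^{m_2}
  \left(\frac{T_{p^m,d,s,\alpha,\bsgamma}(\bsz^{\rm emb}_{s-1},z_s)}{T_{p^m,d,s,\alpha,\bsgamma}(\bsz^{(m)})}\right)^{\!\lambda}.
\]
Since $z_s^{\rm emb}$ is the minimiser over $\bbU_{p^{m_2}}$, the minimum is at most the average over $\bbU_{p^{m_2}}$, and interchanging the sum over $m$ with this average yields
\[
  X_{p,m_1,m_2,d,s,\alpha,\bsgamma}(\bsz^{\rm emb}_{s-1},z_s^{\rm emb})^\lambda
  \,\le\,
  \sum_{m=m_1}^{m_2}
  \frac{1}{T_{p^m,d,s,\alpha,\bsgamma}(\bsz^{(m)})^{\lambda}}
  \cdot
  \frac{1}{\varphi(p^{m_2})}\!\!\sum_{z_s\in\bbU_{p^{m_2}}}\!\!
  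T_{p^m,d,s,\alpha,\bsgamma}(\bsz^{\rm emb}_{s-1},z_s)^{\lambda}.
\]
Because $T_{p^m,d,s,\alpha,\bsgamma}$ depends on $z_s$ only modulo $p^m$ and each residue class in $\bbU_{p^m}$ lifts to exactly $p^{m_2-m}=\varphi(p^{m_2})/\varphi(p^m)$ elements of $\bbU_{p^{m_2}}$, the inner average collapses to an average over $\bbU_{p^m}$. Applying subadditivity of $x\mapsto x^\lambda$ to the decomposition \eqref{eq:Tds} and then Lemma~\ref{lem:thetau} term by term bounds this average by a quantity $\tilde B_m(\lambda)$ that is independent of $\bsz^{\rm emb}_{s-1}$ and scales as $A(\lambda)/\varphi(p^m)$ in $m$.

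The main obstacle is to show that each summand $\tilde B_m(\lambda)/T_{p^m,d,s,\alpha,\bsgamma}(\bsz^{(m)})^{\lambda}$ is bounded by a constant $C(\lambda)$ uniformly in $m$, so that summing gives $C(\lambda)(m_2-m_1+1)$ and hence $X_{p,m_1,m_2,d,s,\alpha,\bsgamma}(\bsz^{\rm emb}_{s-1},z_s^{\rm emb})\le[C(\lambda)(m_2-m_1+1)]^{1/\lambda}$. This requires a matching lower bound on the CBC value at step $s$, which I obtain by retaining just the diagonal contribution in \eqref{eq:theta} with $\bsh=\bszero$ and $\bsell=(0,\ldots,0,p^m)$ (which satisfies $\bsell\cdot\bsz\equiv_{p^m} 0$): this contributes $\gamma_{\{s\}}/p^{m\alpha}$ to $T_{p^m,d,s,\alpha,\bsgamma}(\bsz^{(m)})$, and combined with $\tilde B_m(\lambda)\sim A(\lambda)/\varphi(p^m)$ the ratio stays controlled for $\lambda$ sufficiently close to $1/\alpha$, where the CBC upper bound $T_{p^m,d,s,\alpha,\bsgamma}(\bsz^{(m)})\le\tilde B_m(\lambda)^{1/\lambda}$ matches the diagonal lower bound up to constants. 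Since the per-step conclusion is uniform in $s$, the maximum over $s$ in \eqref{eq:X_S} inherits it, and letting $\lambda\to(1/\alpha)^+$ drives $1/\lambda\to\alpha$, producing a bound of order $(m_2-m_1+1)^\alpha$ with the exponent approachable arbitrarily closely, as claimed.
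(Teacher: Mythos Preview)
Your proposal follows essentially the same route as the paper's proof: bound the minimum over $z_s$ by the average over $\bbU_{p^{m_2}}$, bound the maximum over $m$ by a sum via $(\max_m a_m)^\lambda\le\sum_m a_m^\lambda$, collapse the average over $\bbU_{p^{m_2}}$ to one over $\bbU_{p^m}$ (since $T_{p^m,d,s,\alpha,\bsgamma}$ depends on $z_s$ only through $z_s\bmod p^m$), upper-bound the numerator via Lemma~\ref{lem:thetau}, and lower-bound the denominator by retaining a single diagonal term in~\eqref{eq:theta}. All of these ingredients are correct and coincide with the paper's argument; the paper uses all $\ell_s$ that are nonzero multiples of $p^m$ to obtain $T_{p^m,d,s,\alpha,\bsgamma}(\bsz^{(m)})\ge 2\zeta(\alpha)\gamma_{\{s\}}/p^{m\alpha}$, but your single term $\ell_s=p^m$ giving $\gamma_{\{s\}}/p^{m\alpha}$ is of the same order.

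The one real imprecision is your claim that each summand $\tilde B_m(\lambda)/T_{p^m,d,s,\alpha,\bsgamma}(\bsz^{(m)})^{\lambda}$ is bounded by a constant $C(\lambda)$ \emph{uniformly in $m$}. With $\tilde B_m(\lambda)\asymp 1/\varphi(p^m)$ and $T_{p^m,d,s,\alpha,\bsgamma}(\bsz^{(m)})^\lambda\ge\gamma_{\{s\}}^\lambda\,p^{-m\alpha\lambda}$, the $m$-th summand is of order $(p^{\alpha\lambda-1})^m$, which for any fixed $\lambda>1/\alpha$ grows geometrically in $m$; there is no $m$-independent bound $C(\lambda)$, and hence the sum is not simply $C(\lambda)(m_2-m_1+1)$. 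The paper does not attempt this step: it keeps the $m$-dependence explicit, writes the bound as (a constant times) $\big(\sum_{m=m_1}^{m_2}(p^{\alpha\lambda-1})^m\big)^{1/\lambda}$, and then lets $\lambda\to(1/\alpha)^+$ so that each term tends to $1$ and the sum tends to $m_2-m_1+1$, giving the claimed order $(m_2-m_1+1)^\alpha$. Your final limiting idea is right; just replace the uniform-$C(\lambda)$ step by this direct evaluation.
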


\begin{proof}
Following Algorithm~\textup{\ref{alg:CBCemb}}, for  $s = 1, \ldots, d$,
with $z_1^{\rm emb}, \ldots, z_{s-1}^{\rm emb}$ chosen and fixed, the
next choice $z_s^{\rm emb}$ by the algorithm satisfies for any $\lambda
\in (\frac{1}{\alpha},1] $,
\begin{align} \label{eq:upbXd}
 & \big[X_{p,m_1, m_2,d ,s,\alpha,\bsgamma}
 (z_1^{\rm emb}, \ldots, z_{s-1}^{\rm emb}, z_s^{\rm emb})\big]^{\lambda} \nonumber\\
 & \le \frac{1}{\varphi(p^{m_2})} \sum_{z_s \in \bbU_{p^{m_2}}}
 \big[X_{p,m_1, m_2,d ,s,\alpha,\bsgamma}(z_1^{\rm emb}, \ldots, z_{s-1}^{\rm emb}, z_s)\big]^{\lambda} \notag\\
 &\le \frac{1}{\varphi(p^{m_2})} \sum_{z_s \in \bbU_{p^{m_2}}}
    \bigg[ \sum_{m = m_1}^{m_2} \frac{T_{p^m,d,s,\alpha,\bsgamma}(z_1^{\rm emb}, \ldots, z_{s-1}^{\rm emb}, z_s)}
    {T_{p^m,d,s,\alpha,\bsgamma}(z_1^{(m)},\ldots,z_s^{(m)})} \bigg]^{\lambda} \notag\\
&\le
 \sum_{m = m_1}^{m_2} \frac{\frac{1}{\varphi(p^{m_2})}
 \sum_{z_s \in \bbU_{p^{m_2}}}
 \big[T_{p^m,d,s,\alpha,\bsgamma}(z_1^{\rm emb}, \ldots, z_{s-1}^{\rm emb}, z_s)\big]^{\lambda}
 }{\big[T_{p^m,d,s,\alpha,\bsgamma}(z_1^{(m)},\ldots,z_s^{(m)})\big]^{\lambda}},
\end{align}
where in the first inequality we replaced the minimum over $z_s$ by the
average and in the second inequality we replaced the maximum over $m$ by
the sum, while in the third inequality we used $(\sum_k a_k)^{\lambda} \le
\sum_k a_k^{\lambda}$ for $a_k \ge 0$ and then swapped the order of the
sum over $m$ and the average over $z_s$.

The numerator in \eqref{eq:upbXd} is an average of
$[T_{p^m,d,s,\alpha,\bsgamma}(z_1^{\rm emb}, \ldots, z_{s-1}^{\rm emb},
z_s)]^\lambda$ over $z_s\in\bbU_{p^{m_2}}$, which is exactly the same as
the average over $z_s\in\bbU_{p^m}$, since from
\eqref{eq:Tds}--\eqref{eq:theta} we see that the expression only depends
on $z_s$ through the value of $(z_s \bmod p^m)$. Writing $n = p^m$, we
have from Lemma~\ref{lem:thetau} that, for any $z_1,\ldots,z_{s-1}\in
\bbU_n$,
\begin{align} \label{eq:numer}
 & \frac{1}{\varphi(n)} \sum_{z_s \in \bbU_n}
 \big[T_{n,d,s,\alpha,\bsgamma}(z_1, \ldots, z_{s-1}, z_s)\big]^{\lambda} \nonumber\\
 &\le
 \sum_{\setw\subseteq\{s+1:d\}} \!\!\!\!
 [2\zeta(2\alpha)]^{\lambda \lvert\setw\rvert}\, \frac{1}{\varphi(n)} \sum_{z_s \in \bbU_n}
 \big[ \theta_{n,s,\alpha}\big(z_1, \ldots, z_{s-1},z_s;
       \{\gamma_{\setu\cup\setw}\}_{\setu\subseteq\{1:s\}}\big)\big]^{\lambda} \nonumber \\
 &\le
 \frac{\kappa}{\varphi(n)} \sum_{\setw\subseteq\{s+1:d\}} \!\!\!\!
 [2\zeta(2\alpha)]^{\lambda \lvert\setw\rvert}
 \bigg( \sum_{s \in \setu \subseteq \{1:s\}} \!\!\!\!\gamma_{\setu \cup \setw}^{\lambda} [2 \zeta(\alpha\lambda)]^{\lvert\setu\rvert} \bigg)
 \bigg( \sum_{\setu \subseteq \{1:s\}} \!\!\!\!\gamma_{\setu \cup \setw}^{\lambda} [2 \zeta(\alpha\lambda)]^{\lvert\setu\rvert} \bigg) .
\end{align}

For the denominator in \eqref{eq:upbXd}, we can get a rough lower
bound by restricting \eqref{eq:theta} to the terms with $\bsh = \bszero$,
$\bsell = (0,\ldots,0,\ell_s)$, $\ell_s\ne 0$ and $\ell_s\equiv_n 0$,
where $n=p^m$. We obtain for any $z_1,\ldots,z_s\in\bbU_n$,
\begin{align} \label{eq:denom}
T_{n,d,s,\alpha,\bsgamma} \big(z_1,\ldots,z_s\big)
 \ge \frac{2\zeta(\alpha) \gamma_{\{s\}}}{n^\alpha}.
\end{align}

Substituting \eqref{eq:numer} and \eqref{eq:denom} into \eqref{eq:upbXd},
we conclude that the upper bound to 
\ifthenelse{\equal{\mode}{JournalMode}}{
$X_{p,m_1, m_2,d ,s,\alpha,\bsgamma}(z_1^{\rm emb}, \ldots, z_{s-1}^{\rm emb}, z_s^{\rm emb})$ 
}{
\\$X_{p,m_1, m_2,d ,s,\alpha,\bsgamma}(z_1^{\rm emb}, \ldots, z_{s-1}^{\rm emb}, z_s^{\rm emb})$ 
}
 is of the order
\begin{align*} 
 \bigg(\sum_{m=m_1}^{m_2} \frac{1}{\varphi(p^m)} \, (p^m)^{\alpha\lambda} \bigg)^{1/\lambda}
&= \bigg(\frac{p}{p-1}\sum_{m=m_1}^{m_2}  (p^{\alpha\lambda-1})^m \bigg)^{1/\lambda},
\end{align*}
where the order can be arbitrarily close to $(m_2 - m_1 + 1)^{\alpha}$ as
$\lambda$ approaches $1/\alpha$.%
\end{proof}

\section{Numerical results} \label{sec:NumRes}
For different $d$, $\alpha$ and $\bsgamma$, we will compare values of
$S_{n,d,\alpha,\bsgamma}(\bsz)$ for vectors $\bsz$ obtained by
Algorithm~\ref{alg:CBCapp} for numbers of points $n=2^m$ with those for
prime $n$ in Subsection~\ref{sec:pvsnp}, as well as compare values of
$S_{n,d,\alpha,\bsgamma}(\bsz)$ for embedded lattice rules obtained
by Algorithm~\ref{alg:CBCemb} with those for vectors $\bsz$ obtained by
Algorithm~\ref{alg:CBCapp}  against numbers of points $n=2^m$ in
Subsection~\ref{sec:emb}.

We consider some special forms of weights which
are motivated by applications in uncertainty quantification, see, e.g.,
\cite{KKKNS21,GKNSSS15,GKNSS18b,KN16,KSS12,DKLS16,KKS20}.
\begin{enumerate}
\item For \emph{product weights} \cite{SW98, SW01}, there is a
    positive weight parameter $\gamma_j$ associated with each
    coordinate variable $x_j$, and
    \[
    \gamma_{\setu} = \prod_{j\in \setu} \gamma_j
    \quad\mbox{and}\quad \gamma_{\emptyset} = 1.
    \]
\item For \emph{product and order dependent
    \textnormal{(}POD\textnormal{)} weights} \cite{GKNSSS15, GKNSS18b,
    KN16, KSS12},
there are two sequences $\{\gamma_j\}_{j\ge 1}$ and
$\{\Gamma_\ell\}_{\ell\ge 0}$ such that
\[
 \gamma_{\setu} = \Gamma_{\lvert \setu \rvert} \prod_{j\in \setu} \gamma_j
 \quad\mbox{and}\quad \gamma_{\emptyset} = \Gamma_0 = 1.
\]
\item For \emph{smoothness-driven product and order dependent
    \textnormal{(}SPOD\textnormal{)} weights} \cite{DKLS16, KKS20},
    there is a smoothness degree $\sigma \in \bbN$ and sequences
    $\{\Gamma_\ell\}_{\ell\ge 0}$ and $\{\gamma_{j,\nu}\}_{j\ge
    1,1\le\nu\le\sigma}$ such that, with $\lvert \nu_{\setu} \rvert =
    \sum_{j \in \setu} \nu_j$,
    \[
     \gamma_{\setu} = \sum_{\nu_{\setu} \in \{1:\sigma\}^{\lvert \setu \rvert}
    }\Gamma_{\lvert \nu_{\setu} \rvert}  \prod_{j\in \setu}
    \gamma_{j,\nu_j}
   \quad\mbox{and}\quad \gamma_{\emptyset} = \Gamma_0 = 1.
\]
\end{enumerate}

In this section, we consider two different smoothness
parameters $\alpha \in \{2,4\}$ and three different choices of weights:
\begin{itemize}
\item[]{(a)}  Product weights: $\gamma_j = j^{-1.5 \alpha}$;
\item[]{(b)}  POD weights: $\Gamma_{\ell} = \ell!/a^{\ell},$ $\gamma_j
    = a j^{-1.5 \alpha}$;
\item[]{(c)}  SPOD weights: $\sigma = \alpha/2$, $\Gamma_{\ell} =
    \ell!/a^{\ell},$ $\gamma_{j,\nu} = a \, (2 \,  j^{-1.5
    \alpha})^{\nu}$;
\end{itemize}
with the re-scaling parameter $a = (d!)^{1/d}$ for numerical stability.
These weights were chosen in \cite{CKNS21} to ensure that the implied
constant in the error bound is independent of $d$ and such that they
are distinguishable in the plot.

\subsection{Comparison between lattice rules constructed with prime numbers and composite numbers} \label{sec:pvsnp}
In this subsection, we want to see whether the empirical values of the
convergence rates of $S_{n,d,\alpha,\bsgamma}(\bsz)$ with $\bsz$
constructed for composite $n$ will differ from those constructed for prime
$n$ in~\cite{CKNS21}.

We use the fast CBC constructions developed in \cite{CKNS21} for
approximation based on $S_{n,d,\alpha,\bsgamma}(\bsz)$ together with
the techniques from \cite{NC06b} for composite $n$ to implement
Algorithm~\ref{alg:CBCapp}. In Figure~\ref{fig:np_cbc}, we plot the values
of $S_{n,d,\alpha,\bsgamma}(\bsz)$ against the number of points $n =
2^m$ for $m = 9,10,\ldots, 17$, as well as the values of
$S_{n,d,\alpha,\bsgamma}(\bsz)$ against the prime numbers of points
for $n \in \{ 503, 1009, 2003, 4001, 8009, 16007, 32003, 64007, 128021
\}$.

According to Theorem~\ref{thm:SUapp}, the theoretical rate of convergence
of $S_{n,d,\alpha,\bsgamma}(\bsz)$ is $\calO(\varphi(n)^{-\alpha +
\delta}),$ $\delta>0.$ Table~\ref{tab:empcon} lists the empirical rates of
convergence $\calO(n^{-r})$ for the twelve groups of lines in
Figure~\ref{fig:np_cbc}, where all entries are the values of $r$.

Our key observation is that Figure~\ref{fig:np_cbc}, which is
plotted against $n$ being powers of~$2$, effectively coincides with the
figure in \cite{CKNS21} which was plotted against primes. This is
consistent with Theorem~\ref{thm:SUapp}. The empirical rates in
Table~\ref{tab:empcon} exhibit the expected trend between $\alpha = 2$ and
$\alpha =4$.

As in \cite{CKNS21} we note that different values of $d$ do not affect the
empirical rates of convergence, which is consistent with
Theorem~\ref{thm:errL2}. We remark that since the initial approximation
error $\max_{\setu \subseteq \{1:d\}} \gamma_{\setu}^{1/2}$ may be
different, the empirical values of $S_{n,d,\alpha,\bsgamma}(\bsz)$
vary with the dimension $d$ and weight parameters, so the relative heights
of the lines are irrelevant.

In summary, we have demonstrated that both the theory and construction of
lattice algorithms for function approximation extend well from prime $n$
to composite $n$, 
and from product weights to more complicated forms of weights that arise
from practical applications.

\pgfplotsset{ tick label style={font=\small}, label style={font=\small},
legend style={font=\footnotesize}, every axis/.append style={ line
width=1.0 pt, tick style={line width=0.7pt}}
}

\begin{figure}[t]
\centering
\begin{tikzpicture}[scale = 0.75]
\begin{customlegend}[legend columns=3, legend style={draw=none,at={(4.2,-1)},anchor=north, align=left},,
        legend entries={{non-prime product},
        				{non-prime POD},
				{non-prime SPOD},
				{prime product},
				{prime POD},
				{prime SPOD},
                        }]
       \addlegendimage{color=blue, line width=0.8 pt, line legend}
       \addlegendimage{color=magenta, line width=0.8 pt, line legend}
       \addlegendimage{color=teal, line width=0.8 pt, line legend}
        \addlegendimage{color=red, dotted, line width=0.8 pt, line legend}
        \addlegendimage{color=black, dotted, line width=0.8 pt, line legend}
           \addlegendimage{color=orange, dotted, line width=0.8 pt, line legend}
 \end{customlegend}

\begin{loglogaxis}[
xlabel={$n$}, ylabel={$S_{n,d, \alpha,\bsgamma}(\bsz)$},
grid = major,
            ]
\addplot[color=blue,mark=o,]  table {np_pd_a2_d5.txt} node[below, pos = 1.0, color = black] {$\alpha = 2$};
\addplot[color=blue,mark=o,]  table {np_pd_a2_d10.txt};
\addplot[color=blue,mark=o,]  table {np_pd_a2_d20.txt};
\addplot[color=blue,mark=o,]  table {np_pd_a2_d50.txt};
\addplot[color=blue,mark=o,]  table {np_pd_a2_d100.txt};

\addplot[color=blue,mark=triangle,]  table {np_pd_a4_d5.txt};
\addplot[color=blue,mark=triangle,]  table {np_pd_a4_d10.txt};
\addplot[color=blue,mark=triangle,]  table {np_pd_a4_d20.txt};
\addplot[color=blue,mark=triangle,]  table {np_pd_a4_d50.txt};
\addplot[color=blue,mark=triangle,]  table {np_pd_a4_d100.txt};

\addplot[color=magenta,mark=o,]  table {np_pod_a2_d5.txt};
\addplot[color=magenta,mark=o,]  table {np_pod_a2_d10.txt};
\addplot[color=magenta,mark=o,]  table {np_pod_a2_d20.txt};
\addplot[color=magenta,mark=o,]  table {np_pod_a2_d50.txt};
\addplot[color=magenta,mark=o,]  table {np_pod_a2_d100.txt};

\addplot[color=magenta,mark=triangle,]  table {np_pod_a4_d5.txt};
\addplot[color=magenta,mark=triangle,]  table {np_pod_a4_d10.txt};
\addplot[color=magenta,mark=triangle,]  table {np_pod_a4_d20.txt};
\addplot[color=magenta,mark=triangle,]  table {np_pod_a4_d50.txt};
\addplot[color=magenta,mark=triangle,]  table {np_pod_a4_d100.txt};

\addplot[color=teal,mark=o,]  table {np_spod_a2_d5.txt};
\addplot[color=teal,mark=o,]  table {np_spod_a2_d10.txt};
\addplot[color=teal,mark=o,]  table {np_spod_a2_d20.txt};
\addplot[color=teal,mark=o,]  table {np_spod_a2_d50.txt};
\addplot[color=teal,mark=o,]  table {np_spod_a2_d100.txt};

\addplot[color=teal,mark=triangle,]  table {np_spod_a4_d5.txt};
\addplot[color=teal,mark=triangle,]  table {np_spod_a4_d10.txt};
\addplot[color=teal,mark=triangle,]  table {np_spod_a4_d20.txt};
\addplot[color=teal,mark=triangle,]  table {np_spod_a4_d50.txt};
\addplot[color=teal,mark=triangle,]  table {np_spod_a4_d100.txt};

\addplot[color=red,dotted,mark=o,]  table {pr_pd_a2_d5.txt};
\addplot[color=red,dotted,mark=o,]  table {pr_pd_a2_d10.txt};
\addplot[color=red,dotted,mark=o,]  table {pr_pd_a2_d20.txt};
\addplot[color=red,dotted,mark=o,]  table {pr_pd_a2_d50.txt};
\addplot[color=red,dotted,mark=o,]  table {pr_pd_a2_d100.txt};

\addplot[color=red,mark=triangle,dotted]  table {pr_pd_a4_d5.txt} node[below, pos = 1.0, color = black] {$\alpha = 4$};
\addplot[color=red,mark=triangle,dotted]  table {pr_pd_a4_d10.txt};
\addplot[color=red,mark=triangle,dotted]  table {pr_pd_a4_d20.txt};
\addplot[color=red,mark=triangle,dotted]  table {pr_pd_a4_d50.txt};
\addplot[color=red,mark=triangle,dotted]  table {pr_pd_a4_d100.txt};

\addplot[color=black,mark=o,dotted]  table {pr_pod_a2_d5.txt};
\addplot[color=black,mark=o,dotted]  table {pr_pod_a2_d10.txt};
\addplot[color=black,mark=o,dotted]  table {pr_pod_a2_d20.txt};
\addplot[color=black,mark=o,dotted]  table {pr_pod_a2_d50.txt};
\addplot[color=black,mark=o,dotted]  table {pr_pod_a2_d100.txt};

\addplot[color=black,mark=triangle,dotted]  table {pr_pod_a4_d5.txt};
\addplot[color=black,mark=triangle,dotted]  table {pr_pod_a4_d10.txt};
\addplot[color=black,mark=triangle,dotted]  table {pr_pod_a4_d20.txt};
\addplot[color=black,mark=triangle,dotted]  table {pr_pod_a4_d50.txt};
\addplot[color=black,mark=triangle,dotted]  table {pr_pod_a4_d100.txt};

\addplot[color=orange,mark=o,dotted]  table {pr_spod_a2_d5.txt};
\addplot[color=orange,mark=o,dotted]  table {pr_spod_a2_d10.txt};
\addplot[color=orange,mark=o,dotted]  table {pr_spod_a2_d20.txt};
\addplot[color=orange,mark=o,dotted]  table {pr_spod_a2_d50.txt};
\addplot[color=orange,mark=o,dotted]  table {pr_spod_a2_d100.txt};

\addplot[color=orange,mark=triangle,dotted]  table {pr_spod_a4_d5.txt};
\addplot[color=orange,mark=triangle,dotted]  table {pr_spod_a4_d10.txt};
\addplot[color=orange,mark=triangle,dotted]  table {pr_spod_a4_d20.txt};
\addplot[color=orange,mark=triangle,dotted]  table {pr_spod_a4_d50.txt};
\addplot[color=orange,mark=triangle,dotted]  table {pr_spod_a4_d100.txt};

\end{loglogaxis}
\end{tikzpicture}
\caption{The values of $S_{n,d, \alpha,\bsgamma}(\bsz)$ against $n =
2^m$ and prime $n$ for different weights with $\alpha = 2$ (top six
groups) and $\alpha = 4$ (bottom six groups). Each group includes $5$
lines representing $d \in \{5, 10, 20, 50, 100\}.$} \label{fig:np_cbc}
\end{figure}
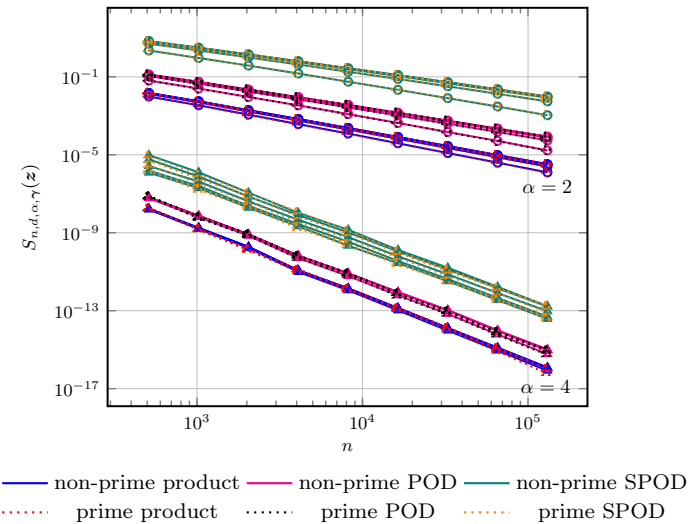

\begin{table}
\centering
\begin{tabular}[htb]{  c | c | c | c | c | c | c }
  \toprule
   &\multicolumn{2}{c|}{Product weights} & \multicolumn{2}{c|}{POD weights} & \multicolumn{2}{c}{SPOD weights} \\
   & $\alpha = 2$ & $\alpha = 4$  & $\alpha = 2$ & $\alpha = 4$ & $\alpha = 2$ & $\alpha = 4$   \\
  \hline \xrowht{10pt}
 prime  & 1.6  & 3.5 &   1.3 & 3.3 & 1.2 & 3.1 \\
  \hline   \xrowht{10pt}
non-prime  &  1.5  & 3.4 &   1.3 & 3.2 & 1.2 & 3.1\\
  \bottomrule
\end{tabular}
\caption{Empirical convergence rates for the twelve groups in
Figure~\ref{fig:np_cbc}} \label{tab:empcon}
\end{table}

\subsection{Comparison between embedded lattice rules and near-optimal lattice rules} \label{sec:emb}

In Figure~\ref{fig:emb_cbc_a2} and~\ref{fig:emb_cbc_a4}, we plot the
values of $S_{2^m,d, \alpha,\bsgamma}(\bsz^{\rm emb})$, of which
$\bsz^{\rm emb}$ is constructed by Algorithm~\ref{alg:CBCemb} for the
range $m = 9, \ldots, 17$, as well as the values of $S_{2^m,d,
\alpha,\bsgamma}(\bsz^{(m)})$ with $\bsz^{(m)}$ constructed by
Algorithm~\ref{alg:CBCapp} for comparison.
Table~\ref{tab:empcon_emb} lists the empirical rates of convergence $\calO(n^{-r})$ for the twelve groups in Figure~\ref{fig:emb_cbc_a2} and~\ref{fig:emb_cbc_a4}, where all entries are
the values of $r$.

\begin{figure}[t]
\centering
\begin{tikzpicture}[scale=0.75]
\begin{customlegend}[legend columns=3, legend style={draw=none,at={(4.2,-1)},anchor=north, align=left},,
        legend entries={{embedded product},
        				{embedded POD},
				{embedded SPOD},
				{near-optimal product},
				{near-optimal POD},
				{near-optimal SPOD},
                        }]
       \addlegendimage{color=blue, dashed, line width=0.8 pt, line legend}
       \addlegendimage{color=magenta, dashed, line width=0.8 pt, line legend}
       \addlegendimage{color=teal, dashed, line width=0.8 pt, line legend}
        \addlegendimage{color=blue, line width=0.8 pt, line legend}
        \addlegendimage{color=magenta, line width=0.8 pt, line legend}
           \addlegendimage{color=teal,  line width=0.8 pt, line legend}
 \end{customlegend}

\begin{loglogaxis}[
xlabel={$n$}, ylabel={$S_{n,d, \alpha,\bsgamma}(\bsz)$},
grid = major, ]
\addplot[color=blue,mark=o, dashed]  table {ext_pd_a2_d5.txt};
\addplot[color=blue,mark=o, dashed]  table {ext_pd_a2_d10.txt};
\addplot[color=blue,mark=o, dashed]  table {ext_pd_a2_d20.txt};
\addplot[color=blue,mark=o, dashed]  table {ext_pd_a2_d50.txt};
\addplot[color=blue,mark=o, dashed]  table {ext_pd_a2_d100.txt};

\addplot[color=magenta,mark=o, dashed]  table {ext_pod_a2_d5.txt};
\addplot[color=magenta,mark=o, dashed]  table {ext_pod_a2_d10.txt};
\addplot[color=magenta,mark=o, dashed]  table {ext_pod_a2_d20.txt};
\addplot[color=magenta,mark=o, dashed]  table {ext_pod_a2_d50.txt};
\addplot[color=magenta,mark=o, dashed]  table {ext_pod_a2_d100.txt};

\addplot[color=teal,mark=o, dashed]  table {ext_spod_a2_d5.txt} ;
\addplot[color=teal,mark=o, dashed]  table {ext_spod_a2_d10.txt};
\addplot[color=teal,mark=o, dashed]  table {ext_spod_a2_d20.txt};
\addplot[color=teal,mark=o, dashed]  table {ext_spod_a2_d50.txt};
\addplot[color=teal,mark=o, dashed]  table {ext_spod_a2_d100.txt};

\addplot[color=blue,mark=o,]  table {np_pd_a2_d5.txt};
\addplot[color=blue,mark=o,]  table {np_pd_a2_d10.txt};
\addplot[color=blue,mark=o,]  table {np_pd_a2_d20.txt};
\addplot[color=blue,mark=o,]  table {np_pd_a2_d50.txt};
\addplot[color=blue,mark=o,]  table {np_pd_a2_d100.txt};

\addplot[color=magenta,mark=o,]  table {np_pod_a2_d5.txt};
\addplot[color=magenta,mark=o,]  table {np_pod_a2_d10.txt};
\addplot[color=magenta,mark=o,]  table {np_pod_a2_d20.txt};
\addplot[color=magenta,mark=o,]  table {np_pod_a2_d50.txt};
\addplot[color=magenta,mark=o,]  table {np_pod_a2_d100.txt};

\addplot[color=teal,mark=o,]  table {np_spod_a2_d5.txt};
\addplot[color=teal,mark=o,]  table {np_spod_a2_d10.txt};
\addplot[color=teal,mark=o,]  table {np_spod_a2_d20.txt};
\addplot[color=teal,mark=o,]  table {np_spod_a2_d50.txt};
\addplot[color=teal,mark=o,]  table {np_spod_a2_d100.txt};

\end{loglogaxis}
\end{tikzpicture}
\caption{The values of $S_{n,d, \alpha,\bsgamma}(\bsz)$ for embedded
lattice rules and near-optimal lattice rules against $n = 2^m$ for
different weights with $\alpha = 2$. Each group includes $5$ lines
representing $d \in \{5, 10, 20, 50, 100\}.$} \label{fig:emb_cbc_a2}
\end{figure}
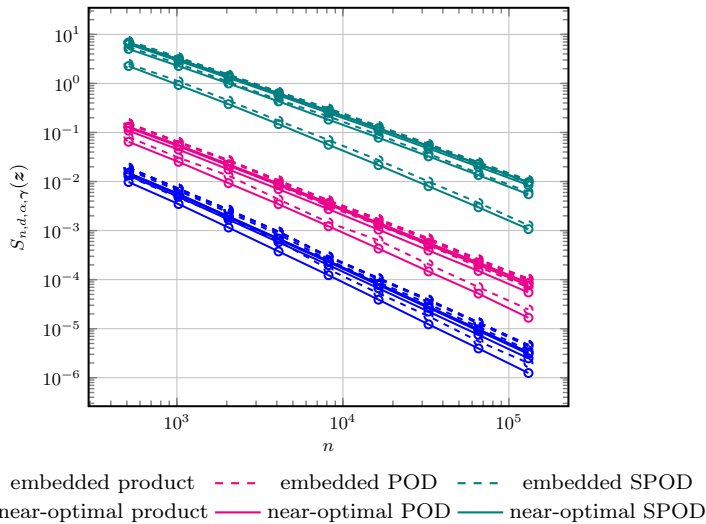

\begin{figure}[t]
\centering
\begin{tikzpicture}[scale = 0.75]
\begin{customlegend}[legend columns=3, legend style={draw=none,at={(4.2,-1)},anchor=north, align=left},,
        legend entries={{embedded product},
        				{embedded POD},
				{embedded SPOD},
				{near-optimal product},
				{near-optimal POD},
				{near-optimal SPOD},
                        }]
       \addlegendimage{color=blue, dashed, line width=0.8 pt, line legend}
       \addlegendimage{color=magenta, dashed, line width=0.8 pt, line legend}
       \addlegendimage{color=teal, dashed, line width=0.8 pt, line legend}
        \addlegendimage{color=blue, line width=0.8 pt, line legend}
        \addlegendimage{color=magenta, line width=0.8 pt, line legend}
           \addlegendimage{color=teal,  line width=0.8 pt, line legend}
 \end{customlegend}

\begin{loglogaxis}[
xlabel={$n$}, ylabel={$S_{n,d, \alpha,\bsgamma}(\bsz)$},
grid = major, ]
\addplot[color=blue,mark=triangle, dashed]  table {ext_pd_a4_d5.txt};
\addplot[color=blue,mark=triangle, dashed]  table {ext_pd_a4_d10.txt};
\addplot[color=blue,mark=triangle, dashed]  table {ext_pd_a4_d20.txt};
\addplot[color=blue,mark=triangle, dashed]  table {ext_pd_a4_d50.txt};
\addplot[color=blue,mark=triangle, dashed]  table {ext_pd_a4_d100.txt};

\addplot[color=magenta,mark=triangle, dashed]  table {ext_pod_a4_d5.txt};
\addplot[color=magenta,mark=triangle, dashed]  table {ext_pod_a4_d10.txt};
\addplot[color=magenta,mark=triangle, dashed]  table {ext_pod_a4_d20.txt};
\addplot[color=magenta,mark=triangle, dashed]  table {ext_pod_a4_d50.txt};
\addplot[color=magenta,mark=triangle, dashed]  table {ext_pod_a4_d100.txt};

\addplot[color=teal,mark=triangle, dashed]  table {ext_spod_a4_d5.txt} ;
\addplot[color=teal,mark=triangle, dashed]  table {ext_spod_a4_d10.txt};
\addplot[color=teal,mark=triangle, dashed]  table {ext_spod_a4_d20.txt};
\addplot[color=teal,mark=triangle, dashed]  table {ext_spod_a4_d50.txt};
\addplot[color=teal,mark=triangle, dashed]  table {ext_spod_a4_d100.txt};

\addplot[color=blue,mark=triangle,]  table {np_pd_a4_d5.txt};
\addplot[color=blue,mark=triangle,]  table {np_pd_a4_d10.txt};
\addplot[color=blue,mark=triangle,]  table {np_pd_a4_d20.txt};
\addplot[color=blue,mark=triangle,]  table {np_pd_a4_d50.txt};
\addplot[color=blue,mark=triangle,]  table {np_pd_a4_d100.txt};

\addplot[color=magenta,mark=triangle,]  table {np_pod_a4_d5.txt};
\addplot[color=magenta,mark=triangle,]  table {np_pod_a4_d10.txt};
\addplot[color=magenta,mark=triangle,]  table {np_pod_a4_d20.txt};
\addplot[color=magenta,mark=triangle,]  table {np_pod_a4_d50.txt};
\addplot[color=magenta,mark=triangle,]  table {np_pod_a4_d100.txt};

\addplot[color=teal,mark=triangle,]  table {np_spod_a4_d5.txt};
\addplot[color=teal,mark=triangle,]  table {np_spod_a4_d10.txt};
\addplot[color=teal,mark=triangle,]  table {np_spod_a4_d20.txt};
\addplot[color=teal,mark=triangle,]  table {np_spod_a4_d50.txt};
\addplot[color=teal,mark=triangle,]  table {np_spod_a4_d100.txt};

\end{loglogaxis}
\end{tikzpicture}
\caption{The values of $S_{n,d, \alpha,\bsgamma}(\bsz)$ for embedded
lattice rules and near-optimal lattice rules against $n = 2^m$ for
different weights with $\alpha = 4$. Each group includes $5$ lines
representing $d \in \{5, 10, 20, 50, 100\}.$} \label{fig:emb_cbc_a4}
\end{figure}

 Given $d=100$, Figure~\ref{fig:Xd} shows the values of  $X_{2, 9,17, d, s, \alpha, \bsgamma}(z_{1}^{\rm emb}, \ldots, z_{s}^{\rm emb})$ against $s =
1,2,\ldots, d$ for product weights, POD weights and SPOD weights with $\alpha = 2$ and $\alpha = 4$.
The maxima of $X_{2, 9,17, d, s, \alpha, \bsgamma}(z_{1}^{\rm emb}, \ldots, z_{s}^{\rm emb})$  over $1 \le s \le 100$ are shown in Table~\ref{tab:max}.

We observe from both Figure~\ref{fig:emb_cbc_a2} and~\ref{fig:emb_cbc_a4}
that each group of lines of $S_{2^m,d, \alpha,\bsgamma}(\bsz^{\rm
emb})$ for embedded lattice rules lie above $S_{2^m,d,
\alpha,\bsgamma}(\bsz^{(m)})$ for near-optimal lattice rules constructed
by Algorithm~\ref{alg:CBCapp}. The groups for $\alpha = 2$ are much closer
than those for $\alpha = 4$. The empirical rates in
Table~\ref{tab:empcon_emb} of embedded lattice rules are close to those of
near-optimal lattice rules and also exhibit the expected trend between
$\alpha = 2$ and $\alpha = 4$.

As in \cite{CKN06}, we see from Figure~\ref{fig:Xd} that, given the quantity $d=100$, 
\ifthenelse{\equal{\mode}{JournalMode}}{
$X_{2, 9,17, d, s, \alpha, \bsgamma}(z_{1}^{\rm emb}, \ldots, z_{s}^{\rm emb})$
}{
$X_{2, 9,17, d, s, \alpha, \bsgamma}(z_{1}^{\rm emb}, \ldots,\\ z_{s}^{\rm emb})$
}
increase initially when $s$ increases, and then decrease from some dimensions and wiggle around some values onward.
In our experiments, Table~\ref{tab:max} shows that, in the case of $\alpha=2$, $\max_{1 \le s \le 100}  X_{2, 9,17, d, s, \alpha, \bsgamma}(z_{1}^{\rm emb}, \ldots, z_{s}^{\rm emb})$
 is at most $2.08$, while in the case of $\alpha = 4$,
$\max_{1 \le s \le 100}  X_{2, 9,17, d, s, \alpha, \bsgamma}(z_{1}^{\rm emb}, \ldots, z_{s}^{\rm emb})$ is at most $25.72$.

\begin{table}
\centering
\begin{tabular}[h!]{  c | c | c | c | c | c | c }
  \toprule
   &\multicolumn{2}{c|}{Product weights} & \multicolumn{2}{c|}{POD weights} & \multicolumn{2}{c}{SPOD weights} \\
   & $\alpha = 2$ & $\alpha = 4$  & $\alpha = 2$ & $\alpha = 4$ & $\alpha = 2$ & $\alpha = 4$   \\
  \hline \xrowht{10pt}
 embedded  & 1.5  & 3.3 &   1.3 & 3.3 &  1.2 & 3.1 \\
  \hline   \xrowht{10pt}
near-optimal  &  1.5  & 3.4 &   1.3 & 3.2 & 1.2 & 3.1\\
  \bottomrule
\end{tabular}
\caption{Empirical convergence rates for the twelve groups in
Figure~\ref{fig:emb_cbc_a2} and~\ref{fig:emb_cbc_a4} } 
\label{tab:empcon_emb}
\end{table}

Considering the worst-case $L_2$ error bound~\eqref{ieq:errMS}, Theorem~\ref{thm:errL2} and \eqref{eq:X_S} show that, 
\begin{align*}
  & e^{\rm wor\mbox{-}app}_{p^m,d,M} (\bsz^{\rm emb} ;L_{2})
  \,\le\, \sqrt{2} \left[ S_{p^m,d, \alpha, \bsgamma}(\bsz^{\rm emb} )   \right]^{1/4} \\
  & \le \,  \left[ \max_{s \in \{1:d\}} X_{p, m_1, m_2 , d, s, \alpha, \bsgamma}(z_1^{\rm emb}, \ldots, z_s^{\rm emb}) \right]^{1/4} \sqrt{2} \left[ S_{p^m,d, \alpha, \bsgamma}(\bsz^{(m)} )   \right]^{1/4} \\
  & \lesssim \left( \frac{ \left[ \max_{s \in \{1:d\}} X_{p, m_1, m_2 , d, s, \alpha, \bsgamma}(z_1^{\rm emb}, \ldots, z_s^{\rm emb}) \right]^{\lambda} }{p^m} \right)^{\frac{1}{4 \lambda}}
  				\mbox{ for all } \lambda \in (1/\alpha, 1],
\end{align*}
where we used that $\varphi(p^m) = p^m (1 - \frac{1}{p}).$
As $\lambda$ can be arbitrarily close to $1/\alpha$, 
\ifthenelse{\equal{\mode}{JournalMode}}{
$\left[ \max_{s \in \{1:d\}} X_{p, m_1, m_2 , d, s, \alpha, \bsgamma}(z_1^{\rm emb}, \ldots, z_s^{\rm emb}) \right]^{1/\alpha}$
}{
\\ $\left[ \max_{s \in \{1:d\}}  X_{p, m_1, m_2 , d, s, \alpha, \bsgamma}(z_1^{\rm emb}, \ldots, z_s^{\rm emb}) \right]^{1/\alpha}$
}
indicates how many times more points are needed for $\bsz^{\rm emb}$ obtained from Algorithm~\ref{alg:CBCemb} to 
achieve the same error bound as $\bsz^{(m)}$ obtained from Algorithm~\ref{alg:CBCapp}
for each $n = p^m, m= m_1, \ldots, m_2$. 
In the case of $\alpha=2$, the embedded lattice rules are at most $1.45$ times worse than the near-optimal lattice rules obtained from Algorithm~\ref{alg:CBCapp}
for specific range of number of points; in the case of $\alpha=4$, this factor increases to $2.26$.

We can consider similarly for the worst-case error measured under $L_{\infty}$ norm, see Theorem~\ref{thm:errLinf}.
In the case of $\alpha=2$, the embedded lattice rules are at most $1.45$ times worse than the near-optimal lattice rules obtained from Algorithm~\ref{alg:CBCapp}
for the specific range of number of points; the same factor holds in the case of $\alpha=4$ 
(in this case, we need to adjust the smoothness parameter to $\tilde{\alpha} = \alpha/2$ and weight parameters to $\tilde{\bsgamma} = \sqrt{\bsgamma}$).

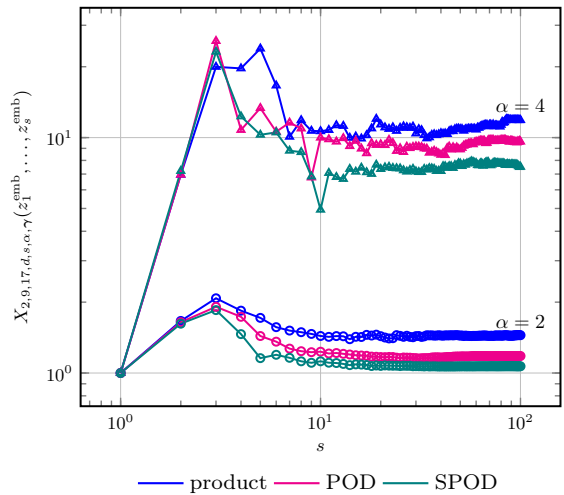
\begin{figure}[t]
\centering
\begin{tikzpicture}[scale=0.75]
\begin{customlegend}[legend columns=3, legend style={draw=none,at={(4.2,-1)},anchor=north, align=left},,
        legend entries={{product},
                        {POD},
                        {SPOD},
                        }]
       \addlegendimage{color=blue, line width=0.8 pt,   line legend}
    
        \addlegendimage{color=magenta,  line width=0.8 pt, line legend}
        
           \addlegendimage{color=teal, line width=0.8 pt, line legend}
         
 \end{customlegend}
\begin{loglogaxis}[
xlabel={$s$}, ylabel={$X_{2, 9,17, d, s, \alpha, \bsgamma}(z_{1}^{\rm emb}, \ldots, z_{s}^{\rm emb})$},
grid = major, ]
\addplot[color=blue,mark=o,]  table {ext_x_pd_a2_d100.txt}  node[above,  pos = 1.0, color = black] {$\alpha = 2$};
\addplot[color=magenta,mark=o,]  table {ext_x_pod_a2_d100.txt};
\addplot[color=teal,mark=o,]  table {ext_x_spod_a2_d100.txt};

\addplot[color=blue,mark=triangle,]  table {ext_x_pd_a4_d100.txt} node[above,  pos = 1.0, color = black] {$\alpha = 4$};
\addplot[color=magenta,mark=triangle,]  table {ext_x_pod_a4_d100.txt};
\addplot[color=teal,mark=triangle,]  table {ext_x_spod_a4_d100.txt};

\end{loglogaxis}
\end{tikzpicture}
\caption{Given $d = 100$, the values of $X_{2, 9,17, d, s, \alpha, \bsgamma}(z_{1}^{\rm emb}, \ldots, z_{s}^{\rm emb})$
 against $s =
1,2,\ldots,d$ for different weights with $\alpha = 2$ (bottom three groups) and $\alpha = 4$ (top three groups).}
\label{fig:Xd}
\end{figure}

\ifthenelse{\equal{\mode}{JournalMode}}{
\begin{table}
\centering
\begin{tabular}[b]{  p{3cm} || p{3cm} | p{3cm}    }
 \toprule
 \multicolumn{3}{c}{$\max_{1 \le s \le 100} X_{2, 9,17, d, s, \alpha, \bsgamma}(z_{1}^{\rm emb}, \ldots, z_{s}^{\rm emb})$} \\ [6pt]
 \hline
  & $\alpha = 2$ & $\alpha = 4$ \\
 \hline
 product   & 2.08    & 23.88 \\
 POD &  1.91  & 25.72 \\
 SPOD & 1.85 & 23.16\\
 \bottomrule
\end{tabular}
\caption{With $d = 100$, the maximum of $X_{2, 9,17, d, s, \alpha, \bsgamma}(z_{1}^{\rm emb}, \ldots, z_{s}^{\rm emb})$ over $s = 1, \ldots, d$ for different weights and $\alpha$ in Figure~\ref{fig:Xd}.}
\label{tab:max}
\end{table}
}{
\begin{table}
\centering
\begin{tabular}[b]{  c || c | c }
 \toprule
 \multicolumn{3}{c}{$\max_{1 \le s \le 100} X_{2, 9,17, d, s, \alpha, \bsgamma}(z_{1}^{\rm emb}, \ldots, z_{s}^{\rm emb})$} \\ [6pt]
 \hline
  & $\alpha = 2$ & $\alpha = 4$ \\
 \hline
 product   & 2.08    & 23.88 \\
 POD &  1.91  & 25.72 \\
 SPOD & 1.85 & 23.16\\
 \bottomrule
\end{tabular}
\caption{With $d = 100$, the maximum of $X_{2, 9,17, d, s, \alpha, \bsgamma}(z_{1}^{\rm emb}, \ldots, z_{s}^{\rm emb})$ over $s = 1, \ldots, d$ for different weights and $\alpha$ in Figure~\ref{fig:Xd}.}
\label{tab:max}
\end{table}
}

\paragraph{Acknowledgements}

We gratefully acknowledge the financial support from the Australian
Research Council (ARC DP210100831) and the Research Foundation -- Flanders
(FWO G091920N).

\bibliographystyle{plain}

\end{document}